\theoremstyle{plain}
\newtheorem{thm}{Theorem}[]
\newtheorem{cor}[thm]{Corollary}
\newtheorem{lem}[thm]{Lemma}
\newtheorem{prop}[thm]{Proposition}
\theoremstyle{definition}
\newtheorem{ex}{Example}
\newtheorem{defn}[thm]{Definition}
\newtheorem*{rmk}{Remark}
\newtheorem*{mainprf}{Proof of Theorem \ref{mainthm}}
\newcommand{\Qb}{\mathbb{Q}}
\newcommand{\Pb}{\mathbb{P}}
\newcommand{\Rb}{\mathbb{R}}
\newcommand{\Pt}{\tilde{\mathbb{P}}}
\newcommand{\Qt}{\tilde{\mathbb{Q}}}
\newcommand{\Fg}{\mathcal{F}}
\newcommand{\Gg}{\mathcal{G}}
\newcommand{\Ft}{\tilde{\mathcal{F}}}
\newcommand{\hs}{\hspace{2mm}}
\newcommand{\hsl}{\hspace{1mm}}
\newcommand{\ind}{\mathbbm{1}}
\author[1]{S.C.~Harris}
\author[2]{M.I.~Roberts}
\affil[1]{Department of Mathematical Sciences, University of Bath, Claverton Down, Bath, BA2 7AY. \emph{E-mail:} S.C.Harris@bath.ac.uk}
\affil[2]{Laboratoire de Probabilit\'es et Mod\`eles Al\'eatoires, Universit\'e Paris VI, 175 rue du Chevaleret, 75013 Paris. \emph{E-mail:} matthew.roberts@upmc.fr}
\title{The unscaled paths of\\ branching Brownian motion}
\begin{document}

\maketitle

\subsection*{Abstract}
For a set $A\subset C[0,\infty)$, we give new results on the growth of the number of particles in a branching Brownian motion whose paths fall within $A$. We show that it is possible to work without rescaling the paths. We give large deviations probabilities as well as a more sophisticated proof of a result on growth in the number of particles along certain sets of paths. Our results reveal that the number of particles can oscillate dramatically. We also obtain new results on the number of particles near the frontier of the model. The methods used are entirely probabilistic.

\section{Introduction}

\noindent
One of the most natural questions to ask about branching Brownian motion (BBM) concerns the position of the extremal particle --- the particle with maximal position at each time $t\geq0$. It is well-known that its speed --- its position divided by time --- converges almost surely to $\sqrt{2r}$ as $t\to\infty$. In fact, far more precise results are available, such as that given by Bramson \cite{bramson:maximal_displacement_BBM} via some powerful and explicit analysis of the Brownian bridge.

Once we know the speed of the extremal particle at large times $t$, we might ask about its history: have its ancestors stayed close to the critical speed throughout, or have they hovered around in the mass of particles near the origin and made a late dash as we get close to time $t$? One way of interpreting this question is to consider branching Brownian motion with absorption. One imagines an absorbing line $L(t)= -x + \gamma t$ where $\gamma$ is a constant close to the critical value $\sqrt{2r}$, such that whenever a particle hits the line $L(t)$ it disappears and is removed from the system. Are there any particles still present at large times? If so then we may consider them to have stayed ``close'' to the extremal edge of the system.

This model for BBM with killing on the line was studied by Kesten \cite{kesten:BBM_with_absorption}, who discovered asymptotics for extinction probabilities and numbers of particles in intervals of the area above the absorbing line. To choose two examples of particular interest, Kesten shows that if $\gamma<\sqrt{2r}$ then there is strictly positive probability that $N(t)$ never becomes empty; and that in the critical case $\gamma = \sqrt{2r}$, the probability that there is at least one particle present at time $t$ is approximately $\exp(-k t^{1/3})$ for some positive constant $k$. Thus it is possible for particles to stay above the line $\Gamma(t) = -x + \gamma t$ for all time whenever $\gamma < \sqrt{2r}$, and that this is not the case when $\gamma=\sqrt{2r}$. 
Our next question might be: can particles stay within $t^{\beta}$ (plus a constant, say) of the critical line for $\beta\in(0,1)$? Indeed, we could also attempt to generalise by moving away from the critical line --- given a path $f:[0,\infty)\to\Rb$, are there particles that stay close to $f$ and, if so, how close? Such questions provide motivation for this article.

\vspace{3mm}

The classical \emph{scaled} path properties of branching Brownian motion (BBM) have now been well-studied: for example, see Lee \cite{lee:large_deviations_for_branching_diffusions} and Hardy and Harris \cite{hardy_harris:spine_bbm_large_deviations} for large deviation results on ``difficult'' paths which have a small probability of any particle following them, and Git \cite{git:almost_sure_path_properties} and Harris and Roberts \cite{harris_roberts:scaled_growth} for the almost sure growth rate of the number of particles near ``easy'' paths along which we see exponential growth in the number of particles. To give these results, the paths of a BBM are rescaled onto the interval $[0,1]$, echoing the approach of Schilder's theorem for a single Brownian motion.

In this article, as suggested above, we consider a problem similar in theme, but from a more naive viewpoint. We are given a fixed set of paths $A\subset C[0,\infty)$ and we want to know how many particles in a BBM have paths within this set $A$. Similar problems in the case of a single Brownian motion have been considered by Kesten \cite{kesten:BBM_with_absorption} and Novikov \cite{novikov:asymptotic_behaviour_nonexit_probs}. The simplest case is to consider the ball $B(f,L)$ of fixed width $L>0$ about a single continuous path $f:[0,\infty)\to\Rb$, (we will, however, consider more general sets of paths). Clearly there is a positive probability that no particle will stay within this fixed ``tube'' --- indeed, the very first particle could wander away from $f$ before it has the chance to give birth to another --- and in this event we say that the process becomes extinct.

The intuition is that the growth of the population due to branching is in constant competition with the ``deaths'' due to particles failing to follow the function $f$. Thus a natural condition arises: if the gradient of $f$ is too large, then the process eventually dies out almost surely and we may ask for the large deviation probabilities of survival up to large times; otherwise, if the gradient of $f$ remains sufficiently small, then we may condition on non-extinction and give an almost sure result on the number of particles along the path.

One payoff for our less classical approach is that we immediately see a dramatic oscillation in the number of particles along certain paths. This unusual behaviour (not seen in the existing literature) has a simple explanation which we demonstrate via some illuminating examples in Section \ref{examples_section}.

\vspace{3mm}

In our proofs, we take advantage of spine techniques to interpret the change of measure given by a carefully chosen martingale. The spine tools give us an intuitive probabilistic handle on the problem, without which we would certainly need substantial extra technical work in several areas. Our particular change of measure involves forcing one particle (the spine) to stay within a tube of varying radius $L(t)$, $t\geq0$ about a function $f$. This change of measure is the result of a new martingale which we develop in Section \ref{spine_section}. We then use the spine decomposition first introduced by Lyons et al. \cite{lyons_et_al:conceptual_llogl_mean_behaviour_bps}, which allows us to bound the growth of the system by looking at the births along the spine.

Even with the spine theory the problem retains significant difficulty inherent in its time-inhomogeneity. This fact is underlined by the observation that even in the case $A=B(f,L)$ we are essentially considering a one-dimensional branching diffusion with time-dependent drift, and asking how many particles remain within a bounded domain about the origin. It turns out that the main difficulty is in showing that extinction of the process coincides (to within a null set) with the event that the limit of our martingale is zero. Standard tools -- analytic or probabilistic -- cannot be applied; instead we proceed by our own methods in Section \ref{inv_spine}, using in particular an identity from Harris and Roberts \cite{harris_roberts:extinction_letter}.

For simplicity, we consider only standard one-dimensional binary branching Brownian motion, but we note that our work could be extended to a wide range of other branching diffusions. In particular the spine methods are well-suited to the situation where each particle gives birth to a random number of new particles, and methods similar to those used in the original papers of Lyons et al \cite{kurtz_et_al:conceptual_kesten_stigum, lyons:simple_path_to_biggins, lyons_et_al:conceptual_llogl_mean_behaviour_bps} could be used to extend our result.

Our main theorem concerns only sets of paths away from criticality. However, by adapting the methods from the proof of this theorem, we are able to obtain new results on the number of particles near the extremes of the system (see Theorems \ref{betaex} and \ref{thirdex}). These results answer the questions raised in the above discussion and, as was mentioned there, should be compared to the work of Bramson \cite{bramson:maximal_displacement_BBM} on the position of the right-most particle, and of Kesten \cite{kesten:BBM_with_absorption} and other authors on BBM with absorption.

\section{Main results}

\subsection{Initial definitions}\label{defns}
We consider a branching Brownian motion starting with one particle at the origin, whereby each particle moves independently and undergoes independent dyadic branching at exponential rate $r>0$. We let the set of particles alive at time $t$ be $N(t)$, and for each particle $u \in N(t)$ denote its position at time $t$ by $X_u(t)$. We extend this notion of a particle's position to include the positions of its ancestors; that is, if $u\in N(t)$ has ancestor $v\in N(s)$ for some $s<t$, then we set $X_u(s):=X_v(s)$. This setup will be given in more detail in Section \ref{spine_section}.

Fix a continuous function $f:[0,\infty)\rightarrow\Rb$, and another $L:[0,\infty)\rightarrow(0,\infty)$. If $f$ and $L$ are twice continuously differentiable then we define
\[E(t):= |f'(t)|L(t) + \int_0^t |f''(s)|L(s)ds + \frac{1}{2}|L'(t)|L(t) + \frac{1}{2}\int_0^t |L''(s)|L(s) ds\]
and
\[S := \liminf_{t\to\infty} \frac{1}{t}\int_0^t \left(r - \frac{1}{2} f'(s)^2 - \frac{\pi^2}{8L(s)^2} + \frac{L'(s)}{2L(s)}\right) ds.\]
We say that the pair $(f,L)$ satisfies the \emph{usual conditions} if:
\renewcommand{\labelenumi}{(\arabic{enumi})}
\begin{enumerate}[(I)]
\item $f(0)=0$;
\item $f$ and $L$ are twice continuously differentiable;
\item $\lim_{t\to\infty}E(t)/t = 0$;
\item $S\in(-\infty,\infty)$.
\end{enumerate}
We assume throughout this article that, unless otherwise stated, these conditions hold. We consider initially the class of sets of the form
\[B(f,L) := \{g\in C[0,\infty): |g(t)-f(t)| < L(t) \hs \forall t\in[0,\infty)\}\]
such that $f$ and $L$ satisfy the usual conditions. After we obtain our results we will be able to extend them in a natural way to cover more general subsets of $C[0,\infty)$ --- see Section \ref{extensions} --- but for now these conditions will allow us to apply integration by parts theorems without any complications. Although condition (III) may appear unnatural, there are clear reasons behind it, some of which are demonstrated via example in Section \ref{extensions}. There are also similar conditions in the work on a single Brownian motion by Kesten \cite{kesten:BBM_with_absorption} and Novikov \cite{novikov:asymptotic_behaviour_nonexit_probs}.

Define
\[\hat N(t) := \left\{u \in N(t) : |X_u(s)-f(s)|< L(s) \hs \forall s\leq t\right\},\]
the set of particles that have stayed within distance $L$ of the function $f$ for all times $s\leq t$. We wish to study the number of particles in $\hat{N}(t)$ at large times. Let
\[\Upsilon := \inf\{t\geq0 : \hat N(t)=\emptyset\}.\]
We call $\Upsilon$ the \emph{extinction time} for the process, and say that the process has become \emph{extinct} by time $t$ if $\Upsilon\leq t$. When we talk about \emph{survival} or \emph{non-extinction}, we mean the event $\Upsilon = \infty$.

\subsection{The non-critical case, $S\neq 0$}\label{mainsec}
We now state our main result in the non-critical case when $S\neq0$. Most of this article will be concerned with proving this theorem.
\begin{thm}\label{mainthm}
If $S < 0$, then $\Upsilon<\infty$ almost surely and
\[\frac{\log \Pb(\hat N(t) \neq \emptyset)}{\inf_{s\leq t}\int_0^s \left(r - \frac{1}{2}f'(u)^2 - \frac{\pi^2}{8L(u)^2} + \frac{L'(u)}{2L(u)}\right)du} \hs\longrightarrow\hs 1.\]
On the other hand, if $S > 0$, then $\Pb(\Upsilon=\infty)>0$ and almost surely on survival we have
\[\frac{\log|\hat N(t)|}{\int_0^t \left(r - \frac{1}{2} f'(s)^2 - \frac{\pi^2}{8L(s)^2} + \frac{L'(s)}{2L(s)}\right) ds} \hs\longrightarrow\hs 1.\]
\end{thm}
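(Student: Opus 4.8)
The plan is to build everything around a single carefully chosen martingale. Define, for a particle $u\in N(t)$ with $u\in\hat N(t)$, a weight of the form
\[
\zeta_u(t) = \exp\!\left(\int_0^t\!\Big(r-\tfrac12 f'(s)^2-\tfrac{\pi^2}{8L(s)^2}+\tfrac{L'(s)}{2L(s)}\Big)ds\right)
\,h\!\left(\frac{X_u(t)-f(t)}{L(t)}\right)e^{-\,\text{(drift correction)}},
\]
where $h$ is the ground-state eigenfunction $h(x)=\cos(\pi x/2)$ of the Laplacian on $(-1,1)$ with Dirichlet boundary conditions, and the drift correction is the Cameron--Martin/Girsanov term that makes the spatial motion a Brownian motion with the appropriate time-dependent drift pushing it to follow $f$ inside the tube. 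Summing $\zeta_u(t)$ over $u\in\hat N(t)$ gives a nonnegative martingale $Z(t)$ (the additive martingale for the killed, drifted BBM), and it is here that conditions (II)--(III) are used: the error term $E(t)$ controls exactly the discrepancy between the naive expression and a genuine martingale, and $E(t)/t\to0$ ensures this discrepancy is subexponential, so that $\log$-asymptotics are unaffected. This construction is what Section \ref{spine_section} is announced to provide, so I will simply invoke it.

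For the case $S>0$, the strategy is the classical spine/$L\log L$ dichotomy. First I would show $Z(t)$ converges a.s.\ and in $L^1$ to a limit $Z_\infty$ with $\mathbb{P}(Z_\infty>0)>0$; the $L^1$-convergence follows from a spine decomposition in the manner of Lyons et al.\ \cite{lyons_et_al:conceptual_llogl_mean_behaviour_bps}, where under the change of measure the spine is a drifted Brownian motion conditioned (via the $h$-transform) to remain in the tube, and an $L\log L$-type computation using $S>0$ (which makes the spine's ``free energy'' integrand have positive running average, so the relevant sum along the spine converges) gives uniform integrability. Since $|\hat N(t)|\ge$ the number of spine-type terms and, conversely, $Z(t)$ is bounded above by $|\hat N(t)|$ times the maximal single-particle weight $\exp(\int_0^t(\cdots)ds)\cdot\sup h\cdot e^{O(E(t))}$, taking logs and dividing by $\int_0^t(r-\tfrac12 f'^2-\tfrac{\pi^2}{8L^2}+\tfrac{L'}{2L})ds$ pins the upper bound. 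For the matching lower bound I would run the standard second-moment/many-to-two argument along a skeleton of times, or equivalently use the spine change of measure to show that on the survival event a positive fraction of the mass is carried by particles near the centre of the tube; the finiteness of $S$ from condition (IV) keeps the normalising integral genuinely linear in $t$ so that the two bounds coincide in the limit.

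For the case $S<0$, the goal is the large-deviation statement $\log\mathbb{P}(\hat N(t)\neq\emptyset)\sim\inf_{s\le t}\int_0^s(r-\tfrac12 f'^2-\tfrac{\pi^2}{8L^2}+\tfrac{L'}{2L})du$. The upper bound on the survival probability is a first-moment (many-to-one) estimate: $\mathbb{P}(\hat N(t)\neq\emptyset)\le \mathbb{E}|\hat N(t)| = e^{rt}\,\mathbb{P}(\text{a single drifted BM stays in the tube up to }t)$, and the Dirichlet-eigenvalue asymptotics for the time-dependent interval $(-L(s),L(s))$ — which is precisely where the $-\pi^2/8L^2$ and the Girsanov terms $-\tfrac12 f'^2+\tfrac{L'}{2L}$ enter — give a single-particle non-exit probability of order $\exp(\int_0^t(-r+\text{integrand})ds + o(t))$, so $\mathbb{E}|\hat N(t)|$ is exactly $\exp(\int_0^t(\text{integrand})ds+o(t))$; but since we only need $\inf_{s\le t}$, I would instead stop the first-moment computation at the optimal time $s^*\le t$ where the integral is minimised (any particle surviving to $t$ has in particular survived to $s^*$), which yields the $\inf$ in the denominator. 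For the lower bound, I would pick that same near-optimal time $s$, seed the tube with the expected number $\sim e^{rs}\times(\text{non-exit prob})$ of particles there using a Paley--Zygmund / second-moment argument (again controlling the second moment via many-to-two, where the dominant contribution is two particles travelling together down the tube), and then restart: with probability bounded below, at least one particle is present at time $s$, and we only need it (or the system it generates) not to die in $[s,t]$, which costs no extra exponential factor beyond $s$ by choice of $s$ near the argmin.

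The main obstacle, as the introduction flags, is not either of the above asymptotics per se but the identification, up to a null set, of the extinction event $\{\Upsilon<\infty\}$ (equivalently $\{Z_\infty=0\}$ when $S>0$) — one needs $\mathbb{P}(\Upsilon=\infty)=\mathbb{P}(Z_\infty>0)$ so that the almost-sure growth statement is vacuous on exactly the right set, and when $S>0$ one needs to rule out ``survival with $Z_\infty=0$'' (slow-but-nonzero growth). Neither the classical analytic approach (no time-homogeneous generator to appeal to) nor the usual spine zero-one argument (the time-inhomogeneity breaks the self-similar recursion) applies directly; the plan here is to follow Section \ref{inv_spine} and use the extinction-probability identity from Harris and Roberts \cite{harris_roberts:extinction_letter}, combined with a branching-property decomposition over an initial block of time, to bootstrap from ``$\mathbb{P}(\text{survival})>0$'' to the exact a.s.\ statement. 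A secondary technical point to be careful about throughout is that $E(t)/t\to0$ only gives $o(t)$ control, so every estimate must be robust to multiplicative $e^{o(t)}$ errors — which is exactly why the theorem is stated with $\log$ and a ratio tending to $1$ rather than with sharper constants.
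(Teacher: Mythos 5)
You have the right skeleton — the cosine/Girsanov additive martingale $Z$, the spine decomposition to get uniform integrability when $S>0$, and moment-type arguments for the $S<0$ probability — but the proposal leaves the genuinely hard part of the theorem unproved. You correctly identify that the crux is showing $\Pb(\Upsilon=\infty)=\Pb(Z(\infty)>0)$, but then you only say you would ``follow Section \ref{inv_spine}'' using the identity of Lemma \ref{extinction_lem} together with ``a branching-property decomposition over an initial block of time''. That is not an argument, and the mechanism you sketch cannot work on its own: the obstruction you yourself name — that $\Pb_x(Z^{f_t,L_t}(\infty)>0)$ is not bounded below uniformly, and indeed degenerates for starting points near the tube boundary — defeats any decomposition over a fixed initial time block, because the particle that witnesses survival at the end of the block may sit arbitrarily close to the edge. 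The paper's proof requires three further ingredients that your sketch does not contain and would have to be reinvented: the truncation of $L$ to a bounded $\tilde L$ (Lemma \ref{bdd_L}) and Lemma \ref{tube_filling}, which uses Lemma \ref{extinction_lem}, Jensen's inequality and the spine decomposition to show the ``good'' region $U_\alpha$ reaches to within $\delta$ of the tube edge for a positive fraction of time; the occupation-time estimates of Lemmas \ref{local_time_lem} and \ref{spine_U} (many-to-one plus two Girsanov changes) showing that no surviving particle can avoid $U_\alpha$ for long except with super-exponentially small probability; and the Poisson-births argument of Proposition \ref{invspineprop} converting time spent in $U_\alpha$ into many independent chances of contributing to $Z(\infty)$. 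Without this, the almost-sure statement ``on survival'' is established only on $\{Z(\infty)>0\}$.

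There is also a logical slip in your $S>0$ growth bounds: the inequality $Z(t)\leq|\hat N(t)|\cdot(\text{maximal single-particle weight})$ yields only the \emph{lower} bound on $|\hat N(t)|$ (useful on $\{Z(\infty)>0\}$); the upper bound needs each surviving particle's weight bounded \emph{below}, which fails because the cosine factor vanishes for particles near the edge of the $L$-tube. The paper handles this in Proposition \ref{limsup_prop} by running the martingale for a widened tube $\gamma L$, so that every particle of $\hat N^{f,L}(t)$ contributes at least $\cos(\pi/2\gamma)$, and then letting $\gamma\downarrow1$ — which in turn requires the at-most-linear-growth check on $\int_0^t L(s)^{-2}ds$ carried out in Proposition \ref{uiprop}; your sketch has no such device (nor a substitute such as first moments plus Borel--Cantelli along a skeleton with an interpolation step). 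Finally, your $S<0$ lower bound via Paley--Zygmund/second moments is a legitimate alternative route, but note the paper gets both directions with no second-moment computation at all, via $\Pb(Z(t)>0)=\Qb[Z(0)/Z(t)]$, Jensen, the spine decomposition and the same $\gamma L$ trick; as written, your two-particle estimate for the time-inhomogeneous tube and the claim that the restart after the argmin time ``costs no extra exponential factor'' are assertions that would still need proof.
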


\noindent
As mentioned earlier, this theorem can be extended to cover more general sets, and we give results in this direction in Section \ref{extensions}. The behaviour at criticality (S=0) depends on the finer behaviour of $f$ and $L$, but we are able to give some results in particular important cases in Section \ref{beta_sec} below. We note the following corollary, which is easily deduced from Theorem \ref{mainthm}.

\begin{cor}
If $S > 0$, then almost surely on survival we have
\[\limsup_{t\to\infty}\frac{1}{t}\log|\hat{N}(t)| = \limsup_{t\to\infty} \frac{1}{t}\int_0^t \left(r - \frac{\pi^2}{8L(s)^2} - \frac{1}{2} f'(s)^2 + \frac{L'(s)}{2L(s)}\right) ds\]
and
\[\liminf_{t\to\infty}\frac{1}{t}\log|\hat{N}(t)| = \liminf_{t\to\infty} \frac{1}{t}\int_0^t \left(r - \frac{\pi^2}{8L(s)^2} - \frac{1}{2} f'(s)^2 + \frac{L'(s)}{2L(s)}\right) ds.\]
\end{cor}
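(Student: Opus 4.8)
The plan is to deduce the corollary directly from Theorem~\ref{mainthm} by an elementary real-analysis argument, with no further probabilistic input. Write $I(t) := \int_0^t \left(r - \frac{1}{2}f'(s)^2 - \frac{\pi^2}{8L(s)^2} + \frac{L'(s)}{2L(s)}\right)ds$ for the integral appearing in the denominator, so that Theorem~\ref{mainthm} (in the case $S>0$, which is the hypothesis here) asserts that almost surely on survival $\log|\hat N(t)| / I(t) \to 1$ as $t\to\infty$. Since $S>0$ we have $\liminf_{t\to\infty} I(t)/t = S > 0$, so in particular $I(t)\to\infty$ and $I(t)>0$ for all large $t$; this positivity is what lets us pass cleanly between $\log|\hat N(t)|$ and $I(t)$.

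The key step is the following: if $a(t)$ and $b(t)$ are positive functions with $a(t)/b(t)\to 1$ and $b(t)\to\infty$, then $\limsup_{t\to\infty} a(t)/t = \limsup_{t\to\infty} b(t)/t$ and likewise for $\liminf$. To see this, fix $\varepsilon\in(0,1)$; for all $t$ large enough we have $(1-\varepsilon)b(t) \leq a(t) \leq (1+\varepsilon)b(t)$, hence $(1-\varepsilon)\,\frac{b(t)}{t} \leq \frac{a(t)}{t} \leq (1+\varepsilon)\,\frac{b(t)}{t}$. Taking $\limsup$ through these inequalities gives $(1-\varepsilon)\limsup_{t\to\infty}\frac{b(t)}{t} \leq \limsup_{t\to\infty}\frac{a(t)}{t} \leq (1+\varepsilon)\limsup_{t\to\infty}\frac{b(t)}{t}$, and since $\limsup_{t\to\infty} b(t)/t = S \in(0,\infty)$ is finite and positive (by condition (IV), the $\limsup$ is also finite, being bounded by $\lim E(t)/t$-type control — in fact one checks $\frac{1}{t}I(t)$ is bounded using the usual conditions), letting $\varepsilon\downarrow 0$ forces equality. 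The argument for $\liminf$ is identical. I would then apply this with $a(t)=\log|\hat N(t)|$ and $b(t)=I(t)$, valid on the almost-sure survival event supplied by Theorem~\ref{mainthm}.

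One small technical point worth addressing: $\log|\hat N(t)|$ is only eventually positive (indeed it is $-\infty$, or undefined, when $\hat N(t)=\emptyset$, and equals $0$ when there is exactly one particle), but on the survival event $\hat N(t)\neq\emptyset$ for all $t$, and since $\log|\hat N(t)|/I(t)\to 1$ with $I(t)\to+\infty$ we must have $\log|\hat N(t)|\to+\infty$ as well; so $\log|\hat N(t)|>0$ for all sufficiently large $t$ and the ratio manipulation above is legitimate. I would also note that the finiteness of both $\limsup$ and $\liminf$ of $\frac{1}{t}I(t)$ follows from the usual conditions: conditions (III) and (IV) together pin $\frac{1}{t}I(t)$ into a bounded range, so the right-hand sides of the two displayed identities in the corollary are genuine finite numbers rather than possibly $+\infty$.

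I do not expect any serious obstacle here; the corollary is essentially a formal consequence of the logarithmic asymptotic in Theorem~\ref{mainthm}. The only thing requiring a moment's care is the sign/positivity bookkeeping described above (ensuring $I(t)>0$ and $\log|\hat N(t)|>0$ eventually, so that multiplying the ratio bounds by $b(t)/t$ preserves inequalities), and confirming that $S>0$ — which is exactly the standing hypothesis of the corollary — is what guarantees this. Everything else is the two-line $\varepsilon$-squeeze on $\limsup$ and $\liminf$.
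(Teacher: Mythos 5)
Your deduction is correct and is exactly the route the paper intends: the corollary is stated there as an immediate consequence of Theorem \ref{mainthm} with no separate proof, and your $\varepsilon$-squeeze — together with the observations that $S>0$ forces $I(t)\to\infty$ (so the ratio bounds may be multiplied through by $I(t)/t$) and that the usual conditions keep $\frac{1}{t}I(t)$ bounded above — is the standard way to make that deduction precise. Nothing further is needed.
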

This possibility of dramatic oscillation in the number of particles at large times is not usually seen in the branching processes literature. Example \ref{nondiff}, in Section \ref{examples_section} below, helps to show why it occurs in our situation.

\subsection{The critical case, $S=0$}\label{beta_sec}
At least one obvious question immediately arises: what happens when $S=0$? This is an interesting but delicate matter: one must look at the finer behaviour of
\[\int_0^t \left(r - \frac{1}{2} f'(s)^2 - \frac{\pi^2}{8L(s)^2} + \frac{L'(s)}{2L(s)}\right) ds.\]
Our methods, as they stand, are not always sharp enough to say what will happen, and we are unable to provide a complete theory as we must adapt carefully to the set in question. There are several situations, however, where something can be done. We are able to give results on the behaviour near the critical line $\sqrt{2r}t$ in Theorems \ref{betaex} and \ref{thirdex} below. Proofs of these two theorems will be given in Section \ref{critical_sec}, as adaptations of our main proof, that of Theorem \ref{mainthm}.

Fix $\alpha>0$, $\beta\in(0,1)$ and $\gamma>0$, and for $t\geq0$ let
\[f(t)=\alpha + \sqrt{2r}t - \alpha(t+1)^\beta \hs\hbox{ and }\hs L(t) = \gamma(t+1)^\beta.\]

\begin{thm}\label{betaex}
If $\beta<1/3$ then we have $\Pb(\Upsilon=\infty)=0$, and
\[\frac{\log\Pb(\hat N(t)\neq\emptyset)}{t^{1-2\beta}} \longrightarrow -\frac{\pi^2}{8\gamma^2(1-2\beta)}.\]
If $\beta>1/3$, we have $\Pb(\Upsilon=\infty)>0$, and almost surely on survival
\[\frac{\log|\hat N(t)|}{t^\beta} \longrightarrow (\alpha+\gamma)\sqrt{2r}.\]
\end{thm}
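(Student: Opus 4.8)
The plan is to derive Theorem~\ref{betaex} as a direct application of Theorem~\ref{mainthm} together with the two theorems (\ref{betaex} and~\ref{thirdex}) quoted for the critical regime, treating the cases $\beta<1/3$ and $\beta>1/3$ separately. The first thing I would do is substitute the explicit $f$ and $L$ into the key integrand
\[
I(t) := \int_0^t \left(r - \tfrac12 f'(s)^2 - \tfrac{\pi^2}{8L(s)^2} + \tfrac{L'(s)}{2L(s)}\right) ds,
\]
and compute its asymptotics. We have $f'(s) = \sqrt{2r} - \alpha\beta(s+1)^{\beta-1}$, so $\tfrac12 f'(s)^2 = r - \sqrt{2r}\,\alpha\beta(s+1)^{\beta-1} + O((s+1)^{2\beta-2})$, giving $r - \tfrac12 f'(s)^2 = \sqrt{2r}\,\alpha\beta(s+1)^{\beta-1} + O((s+1)^{2\beta-2})$. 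Similarly $\tfrac{L'(s)}{2L(s)} = \tfrac{\beta}{2(s+1)}$ and $\tfrac{\pi^2}{8L(s)^2} = \tfrac{\pi^2}{8\gamma^2}(s+1)^{-2\beta}$. Integrating term by term: the $f$-term contributes $\sqrt{2r}\,\alpha\big((t+1)^\beta - 1\big) \sim \alpha\sqrt{2r}\,t^\beta$; the $\tfrac{L'}{2L}$ term contributes $\tfrac{\beta}{2}\log(t+1) = o(t^\beta)$; and the $\tfrac{\pi^2}{8L^2}$ term contributes $-\tfrac{\pi^2}{8\gamma^2}\cdot\tfrac{(t+1)^{1-2\beta}-1}{1-2\beta}$ when $\beta\ne 1/2$, which is $-\tfrac{\pi^2}{8\gamma^2(1-2\beta)}t^{1-2\beta}(1+o(1))$.

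Now I would split on the sign of the dominant term. When $\beta>1/3$ we have $\beta > 1-2\beta$, so $I(t) \sim \alpha\sqrt{2r}\,t^\beta \to +\infty$; in particular $S = \liminf_t I(t)/t = 0$ (the integrand tends to $0$) — wait, this is exactly the boundary case $S=0$, so Theorem~\ref{mainthm} does \emph{not} apply directly and I must instead invoke the finer result. The point is that the proofs of Theorems~\ref{betaex} and~\ref{thirdex} are carried out in Section~\ref{critical_sec} as adaptations of the proof of Theorem~\ref{mainthm}; so for the statement as written I would simply note that the asymptotics of $I(t)$ identify the claimed limits once those adapted proofs supply the comparison $\log|\hat N(t)| \sim I(t)$ on survival (for $\beta>1/3$) and $\log\Pb(\hat N(t)\ne\emptyset) \sim \inf_{s\le t} I(s)$ (for $\beta<1/3$). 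For $\beta>1/3$, since $I(t)\to+\infty$ monotonically up to lower-order fluctuations, the leading term is $\alpha\sqrt{2r}\,t^\beta$. To obtain the stated constant $(\alpha+\gamma)\sqrt{2r}$ rather than $\alpha\sqrt{2r}$, I would account for the fact that a particle surviving in the tube $B(f,L)$ can actually sit near the \emph{upper} edge $f(t)+L(t) = \alpha + \sqrt{2r}\,t - \alpha(t+1)^\beta + \gamma(t+1)^\beta = \alpha + \sqrt{2r}\,t + (\gamma-\alpha)(t+1)^\beta$; equivalently, one recenters by applying the theorem to the shifted pair with effective polynomial correction $(\alpha+\gamma)$ — i.e.\ the relevant integral governing the best achievable growth picks up $(\alpha+\gamma)\sqrt{2r}\,t^\beta$ from optimising the drift cost against the tube, and I would need to verify this recentering is legitimate under the usual conditions (conditions (I)--(IV) hold for these $f,L$ by direct check, with $E(t)/t \to 0$ since $E(t) = O(t^{2\beta-1}) + O(t^\beta)\cdot\text{const} = o(t)$).

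For $\beta<1/3$ we have $1-2\beta > \beta$, so $I(t) \sim -\tfrac{\pi^2}{8\gamma^2(1-2\beta)}t^{1-2\beta} \to -\infty$, hence $\inf_{s\le t} I(s) \sim -\tfrac{\pi^2}{8\gamma^2(1-2\beta)}t^{1-2\beta}$ as well (the infimum is attained near $s=t$ since $I$ is eventually decreasing), and dividing gives the claimed limit $-\tfrac{\pi^2}{8\gamma^2(1-2\beta)}$ for $\log\Pb(\hat N(t)\ne\emptyset)/t^{1-2\beta}$; that $\Pb(\Upsilon=\infty)=0$ is immediate since $\Pb(\hat N(t)\ne\emptyset)\to 0$. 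The main obstacle here is \emph{not} the asymptotic bookkeeping, which is routine, but rather justifying that Theorem~\ref{mainthm}'s conclusions (or their adaptations) remain valid at $S=0$ in this borderline polynomial setting — precisely the point flagged in Section~\ref{beta_sec} that "our methods, as they stand, are not always sharp enough." Concretely, the hard part will be the lower bound in the $\beta>1/3$ survival case: showing that the martingale limit is genuinely positive on a positive-probability event and that no better growth rate than $(\alpha+\gamma)\sqrt{2r}\,t^\beta$ is possible, which requires the spine argument of Section~\ref{critical_sec} to be pushed through with the time-inhomogeneous generator whose principal eigenvalue integral is exactly the delicate quantity $I(t)$. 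I would therefore defer the substantive work to that section and present the computation of $I(t)$ as the bridge that turns those adapted estimates into the explicit constants stated above.
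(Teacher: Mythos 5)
Your computation of the asymptotics of $I(t)$ and $E(t)$ is correct and is indeed the paper's starting point, and you are right that $S=0$ here so Theorem \ref{mainthm} cannot be cited directly. But beyond that the proposal has a genuine gap, in two respects. First, the bridge you propose for $\beta>1/3$ --- that the adapted arguments supply $\log|\hat N(t)|\sim I(t)$ on survival --- is false in this regime: $I(t)\sim\alpha\sqrt{2r}\,t^\beta$, whereas the true growth is $(\alpha+\gamma)\sqrt{2r}\,t^\beta$. Your attempted repair via ``recentering at the upper edge'' points at the wrong edge: $f+L$ lies $(\gamma-\alpha)(t+1)^\beta$ \emph{above} the critical line when $\gamma>\alpha$, which is where growth is smallest (or impossible), not largest. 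The extra $\gamma\sqrt{2r}$ actually enters in two different ways: for the upper bound, the Girsanov error $E(t)=\gamma\sqrt{2r}(t+1)^\beta+o(t^\beta)$ is of the \emph{same} order as the main term (unlike in the non-critical setting), so the argument of Proposition \ref{limsup_prop} yields $\limsup\log|\hat N(t)|/t^\beta\le(\alpha+\gamma)\sqrt{2r}$; for the lower bound, one recenters on a thin tube of radius $\varepsilon(t+1)^\beta$ hugging the \emph{lower} edge $f-L=\alpha+\sqrt{2r}\,t-(\alpha+\gamma)(t+1)^\beta$, whose drift term alone contributes $(\alpha+\gamma-\varepsilon)\sqrt{2r}\,t^\beta$ while its width penalty is $O(t^{1-2\beta})=o(t^\beta)$.

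Second --- and this is where the substantive work lies --- you ``defer the substantive work to that section,'' but Section \ref{critical_sec} \emph{is} the proof you are being asked to produce, so the deferral is circular. In particular, the almost-sure statement on survival requires showing that survival of the original $(f,L)$ process coincides, up to null sets, with positivity of the limit of the recentered object; the paper does this by introducing $W(t)=\sum_{u\in\hat N^{f,L}(t)}e^{-rt}G_u^{\tilde f,\tilde L}(t)$ and re-running the good-region machinery (Lemmas \ref{tube_filling} and \ref{spine_U}, Proposition \ref{invspineprop}) for the recentered pair, checking that $U_{\alpha'}$ stretches to near the boundary of the \emph{original} tube for all times. Proposition \ref{invspineprop} is stated only for $S>0$, so it cannot simply be cited here. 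Your outline of the $\beta<1/3$ case is essentially right (mimic Proposition \ref{uiprop}; since $E(t)=O(t^\beta)=o(t^{1-2\beta})$ the error terms are negligible and the two bounds match, and extinction follows from the decay of $\Pb(\hat N(t)\neq\emptyset)$), but you would need to carry out that mimicry rather than defer it; likewise $\Pb(\Upsilon=\infty)>0$ for $\beta>1/3$ needs an argument --- the spine decomposition converges when $\alpha>\gamma$, and the general case follows by monotonicity in $\gamma$ --- which the proposal does not supply.
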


It is well-known that the asymptotic speed of the right-most particle in a BBM is $\sqrt{2r}$. The theorem above concerns asking particles to stay close to this critical line forever: for example, we might ask particles to be in $(\sqrt{2r}t - 2\alpha t^\beta, \sqrt{2r}t)$ \emph{for all times} $t\geq0$. If $\beta>1/3$ then particles manage this with positive probability; if $\beta<1/3$ then they do not. What if $\beta=1/3$? Intuitively this question is ``even more critical'' than the previous theorem. Indeed, our methods are not able to give a full answer, but they can identify regimes where each behaviour (growth or death) is observed.

\begin{thm}\label{thirdex}
Consider the case $\beta=1/3$. Let
\[\gamma_0:= \left(\frac{3\pi^2}{8\sqrt{2r}}\right)^{1/3} \hs\hs \hbox{and} \hs\hs \gamma_1:= \left(\frac{3\pi^2}{4\sqrt{2r}}\right)^{1/3}.\]
If $\gamma<\gamma_0$ and $\alpha<\frac{3\pi^2}{8\gamma^2\sqrt{2r}}-\gamma$, then $\Pb(\Upsilon=\infty)=0$; in fact
\[\liminf_{t\to\infty}\frac{\log \Pb(\hat N(t)\neq\emptyset)}{t^{1/3}} \geq \alpha\sqrt{2r} - \frac{3\pi^2}{8\gamma^2} - \gamma\sqrt{2r}\]
and
\[\limsup_{t\to\infty}\frac{\log \Pb(\hat N(t)\neq\emptyset)}{t^{1/3}} \leq \alpha\sqrt{2r} - \frac{3\pi^2}{8\gamma^2} + \gamma\sqrt{2r}.\]
On the other hand, if $\gamma\geq\gamma_1$ and $\alpha>3\gamma_1/2$, or if $\gamma<\gamma_1$ and $\alpha>\gamma+\frac{3\pi^2}{8\gamma^2\sqrt{2r}}$, then $\Pb(\Upsilon=\infty)>0$ and almost surely on survival
\[\liminf_{t\to\infty}\frac{\log |\hat N(t)|}{t^{1/3}} \geq \alpha\sqrt{2r} - \frac{3\pi^2}{8(\gamma\vee\gamma_1)^2} - (\gamma\vee\gamma_1)\sqrt{2r}\]
and
\[\limsup_{t\to\infty}\frac{\log |\hat N(t)|}{t^{1/3}} \leq \alpha\sqrt{2r} - \frac{3\pi^2}{8\gamma^2} + \gamma\sqrt{2r}.\]
\end{thm}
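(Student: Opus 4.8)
\noindent Write $I(t):=\int_0^t\!\big(r-\tfrac12 f'(s)^2-\tfrac{\pi^2}{8L(s)^2}+\tfrac{L'(s)}{2L(s)}\big)\,ds$ for the exponent in Theorem \ref{mainthm}. The plan begins with a direct computation for $\beta=1/3$: the integrand equals $\big(\tfrac{\alpha\sqrt{2r}}{3}-\tfrac{\pi^2}{8\gamma^2}\big)(s+1)^{-2/3}+O((s+1)^{-1})$, so $I(t)=\big(\alpha\sqrt{2r}-\tfrac{3\pi^2}{8\gamma^2}\big)t^{1/3}+O(\log t)$, while $E(t)=|f'(t)|L(t)+\cdots=\gamma\sqrt{2r}\,t^{1/3}+O(1)$. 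Thus $S=0$, so Theorem \ref{mainthm} does not apply; more importantly, the error term $E(t)$ is now of the \emph{same} order $t^{1/3}$ as $I(t)$ rather than negligible as it is under the usual conditions, and this is exactly what produces the $\pm\gamma\sqrt{2r}$ gap in the bounds. The idea is therefore to re-run each half of the proof of Theorem \ref{mainthm}, but now keeping the $E(t)$-type corrections explicitly rather than absorbing them into the error.

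\emph{Death regime.} For the upper bound I would apply the many-to-one lemma, $\Pb(\hat N(t)\neq\emptyset)\le\mathbb E|\hat N(t)|=e^{rt}\Pb(|\xi_s-f(s)|<L(s)\ \forall\,s\le t)$ with $\xi$ a Brownian motion started at the origin; a Girsanov change of measure removing the drift of $f$ (whose density, after integration by parts, is bounded on the tube by $e^{E(t)}$) combined with the classical estimate for a Brownian motion staying in the moving strip $\{|x|<L(s)\}$ gives $\mathbb E|\hat N(t)|\le C\exp(I(t)+E(t))$. Hence $\limsup_{t\to\infty}t^{-1/3}\log\Pb(\hat N(t)\neq\emptyset)\le\alpha\sqrt{2r}-\tfrac{3\pi^2}{8\gamma^2}+\gamma\sqrt{2r}$, which is negative by the hypothesis $\alpha<\tfrac{3\pi^2}{8\gamma^2\sqrt{2r}}-\gamma$; in particular $\Pb(\Upsilon=\infty)=0$. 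For the lower bound I would use the change of measure $\Qb$ from Section \ref{spine_section}: the spine always lies in $\hat N(t)$, so $\Pb(\hat N(t)\neq\emptyset)=\mathbb E^{\Qb}[M(0)/M(t)]\ge M(0)/\mathbb E^{\Qb}[M(t)]$ by Jensen. The spine decomposition writes $\mathbb E^{\Qb}[M(t)]=e^{-I(t)}\mathbb E^{\Qb}[v(\xi_t,t)]+r\int_0^t e^{-I(s)}\mathbb E^{\Qb}[v(\xi_s,s)]\,ds$, where $v(x,s)\approx\cos\!\big(\tfrac{\pi(x-f(s))}{2L(s)}\big)e^{f'(s)(x-f(s))}$ is the associated ground state and, under $\Qb$, $\xi-f$ is a Brownian motion conditioned to stay in the tube. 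The key point is that this conditioned spine sits close enough to the right-hand edge $f(s)+L(s)$ that $\mathbb E^{\Qb}[v(\xi_s,s)]$ is of order $e^{\sqrt{2r}L(s)}$ up to polynomial factors; substituting and carrying out the $s^{1/3}$-integral yields $\mathbb E^{\Qb}[M(t)]\le\exp(-I(t)+E(t)+o(t^{1/3}))$, so $\liminf_{t\to\infty}t^{-1/3}\log\Pb(\hat N(t)\neq\emptyset)\ge\alpha\sqrt{2r}-\tfrac{3\pi^2}{8\gamma^2}-\gamma\sqrt{2r}$.

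\emph{Growth regime.} For the upper bound on $|\hat N(t)|$ I would feed $\mathbb E|\hat N(t)|\le C\exp(I(t)+E(t))$ into a Borel--Cantelli argument along integer times --- legitimate since $I(t)+E(t)\asymp t^{1/3}\to\infty$ in this regime --- and interpolate to all $t$ in the usual way, obtaining $\limsup_{t\to\infty}t^{-1/3}\log|\hat N(t)|\le\alpha\sqrt{2r}-\tfrac{3\pi^2}{8\gamma^2}+\gamma\sqrt{2r}$. For positive survival and the lower bound I would pass to a sub-tube $B(f,L_{\gamma'})$ with $L_{\gamma'}(s)=\gamma'(s+1)^{1/3}$ for a suitable $\gamma'\in(0,\gamma]$ chosen to optimise the outcome; it still satisfies the usual conditions, with exponent $I_{\gamma'}(t)=\big(\alpha\sqrt{2r}-\tfrac{3\pi^2}{8(\gamma')^2}\big)t^{1/3}(1+o(1))$ and error $E_{\gamma'}(t)=\gamma'\sqrt{2r}\,t^{1/3}(1+o(1))$. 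The content of the hypotheses on $(\alpha,\gamma)$ is that, for the optimal $\gamma'$, they are equivalent to convergence of $\int_0^\infty e^{-I_{\gamma'}(s)}\mathbb E^{\Qb}[v_{\gamma'}(\xi_s,s)]\,ds$; by the spine decomposition this forces the sub-tube martingale $M_{\gamma'}$ to be non-degenerate, so $\mathbb E\,M_{\gamma',\infty}=M_{\gamma'}(0)>0$ and $\Pb(\Upsilon=\infty)\ge\Pb(\Upsilon_{\gamma'}=\infty)\ge\Pb(M_{\gamma',\infty}>0)>0$. On $\{M_{\gamma',\infty}>0\}$ one has $|\hat N(t)|\ge|\hat N_{\gamma'}(t)|\ge M_{\gamma'}(t)\,e^{I_{\gamma'}(t)}\big/\sup_x v_{\gamma'}(x,t)\ge c\exp\big(I_{\gamma'}(t)-\gamma'\sqrt{2r}\,t^{1/3}\big)$, which is the claimed lower bound; to upgrade it from $\{M_{\gamma',\infty}>0\}$ to the whole survival event one argues, as for Theorem \ref{mainthm}, that on $\{\Upsilon=\infty\}$ infinitely many lineages independently attempt to found a width-$\gamma'$ survivor and, by the branching property and Borel--Cantelli, one almost surely does. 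Optimising over $\gamma'$ then yields the constant in the statement.

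\emph{Main obstacle.} The delicate step is the positive-survival claim. For Theorem \ref{mainthm} it rested on $S>0$ and on the identity $\{M_\infty=0\}=\{\Upsilon<\infty\}$ established in Section \ref{inv_spine} using the identity of Harris and Roberts \cite{harris_roberts:extinction_letter}; here $S=0$ and the relevant martingale exponent grows only like $t^{1/3}$, so I would have to re-examine that argument and check that the weaker hypothesis --- convergence of $\int_0^\infty e^{-I_{\gamma'}(s)}\mathbb E^{\Qb}[v_{\gamma'}(\xi_s,s)]\,ds$ --- still rules out degeneration of $M_{\gamma'}$ on the survival event. The other, more computational, burden is estimating $\mathbb E^{\Qb}[v(\xi_s,s)]$ sharply, i.e.\ controlling precisely how close to the moving boundary the $\Qb$-spine typically lies, so that the exponents come out as exactly $\alpha\sqrt{2r}-\tfrac{3\pi^2}{8\gamma^2}\pm\gamma\sqrt{2r}$ and not something weaker.
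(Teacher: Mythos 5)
Most of your outline runs parallel to the paper's proof: the computations $\int_0^t\bigl(r-\tfrac12 f'^2-\tfrac{\pi^2}{8L^2}+\tfrac{L'}{2L}\bigr)ds=(\alpha\sqrt{2r}-\tfrac{3\pi^2}{8\gamma^2})(t+1)^{1/3}+o(t^{1/3})$ and $E(t)=\gamma\sqrt{2r}(t+1)^{1/3}+o(t^{1/3})$, the Jensen-plus-spine-decomposition lower bound and the wider-tube/first-moment upper bound in the death regime, and the survival criterion obtained by demanding convergence of the spine decomposition for a sub-tube of radius $\gamma'\leq\gamma$, optimised to produce $\gamma_1$, are all exactly the paper's steps (the paper bounds $\Qt[Z(t)\mid\Gg_\infty]$ deterministically via inequality (\ref{gineq}), so no claim about where the $\Qb$-spine sits is needed, but your exponents agree).

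The genuine gap is at the step you flag and then pass over: upgrading the growth lower bound from $\{M_{\gamma',\infty}>0\}$ to the whole survival event $\{\Upsilon=\infty\}$. Your suggestion that ``on survival infinitely many lineages independently attempt to found a width-$\gamma'$ survivor and Borel--Cantelli gives one success'' fails as stated, because the success probability of an attempt is not bounded below uniformly: it depends on the attempting particle's position and degenerates near the tube boundary, and at this critical scaling the good region $U_{\alpha'}$ can only be shown to reach within $\delta(s+1)^{1/3}$ of the edges. One must therefore rule out that a surviving lineage spends most of its time in that boundary layer. For Theorem \ref{mainthm} this is Lemma \ref{spine_U}, resting on the local-time estimate of Lemma \ref{local_time_lem}; here that estimate is useless, since the layer has width of order $\delta t^{1/3}$, so the bound $3e^{t/2-k/4\delta t^{1/3}}$ with $k\asymp t$ cannot beat the $e^{rt}$ from many-to-one, nor even an $e^{O(t^{1/3})}$ error budget. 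The paper's proof supplies the missing idea: after the Girsanov step one retains the factor $e^{-\frac12\int_0^t f'(s)^2ds}\approx e^{-rt}$, which cancels the many-to-one factor and leaves an error $e^{\kappa t^{1/3}}$; then a path that stays in the tube yet spends time $>t/4$ within $\delta(s+1)^{1/3}$ of the upper boundary must start at least of order $t^{1/3}/\delta^2$ disjoint time windows of length $\tau=\delta^2 t^{2/3}$ in that layer, and in each window it exits the tube with probability bounded below (the overshoot needed, $\approx\delta t^{1/3}$, is comparable to $\sqrt{\tau}$), so the offending probability is at most $(1-p)^{t^{1/3}/4\delta^2}\ll e^{-2\kappa t^{1/3}}$ for small $\delta$ --- the ``$\tfrac13+2\cdot\tfrac13=1$'' scaling argument. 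Without this (or an equivalent replacement for Lemma \ref{spine_U}), the identification of $\{Z(\infty)>0\}$ with $\{\Upsilon=\infty\}$, and hence the ``almost surely on survival'' part of both growth bounds, is unproved; the rest of your plan, including the extinction conclusion from the negative upper bound in the death regime, is sound.
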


Theorems \ref{betaex} and \ref{thirdex} should be compared with what is currently known about the right-most particle, for example the work of Bramson \cite{bramson:maximal_displacement_BBM} and Lalley and Sellke \cite{lalley_sellke:conditional_limit_frontier_BBM}, results on branching Brownian motion with killing, for example Kesten \cite{kesten:BBM_with_absorption}, and work on the branching random walk, for example Hu and Shi \cite{hu_shi:minimal_pos_crit_mg_conv_BRW} and Jaffuel \cite{jaffuel:crit_barrier_brw_absorption}. The recent article by Jaffuel \cite{jaffuel:crit_barrier_brw_absorption}, in particular, gives results similar to our Theorems \ref{betaex} and \ref{thirdex}.

\section{Examples}\label{examples_section}

We now consider some very simple examples to give the reader a flavour of the implications of Theorem \ref{mainthm}. More complex examples will be given in Sections \ref{extensions} and \ref{critical_sec} in order to explore the limits of our method.

\begin{ex}
Take $f(t) = \lambda t$ with $\lambda\in\Rb$ and $L(t)\equiv L>0$. We have a growth rate of $r - \frac{\lambda^2}{2} - \frac{\pi^2}{8L^2}$ (provided this is non-zero): if this constant is negative, then
\[\frac{1}{t}\log\Pb(\hat N(t)\neq\emptyset) \hs\longrightarrow\hs r - \frac{\lambda^2}{2} - \frac{\pi^2}{8L^2}\]
and if it is positive then there is a positive probability of survival, and almost surely on that event
\[\frac{1}{t}\log\hat N(t) \hs\longrightarrow\hs r - \frac{\lambda^2}{2} - \frac{\pi^2}{8L^2}\]
Thus taking a fixed $L$ introduces an extra ``killing'' rate of $\frac{\pi^2}{8L^2}$ to the system compared to the scaled results of \cite{git:almost_sure_path_properties, hardy_harris:spine_bbm_large_deviations, harris_roberts:scaled_growth, lee:large_deviations_for_branching_diffusions}.
\end{ex}

\begin{ex}
Again take $f(t) = \lambda t$ with $\lambda\in\Rb\setminus\{\sqrt{2r}\}$ but now let $L$ be any unbounded monotone non-decreasing function such that $(f,L)$ satisfies the usual conditions (for example $L(t)=(t+1)^\beta$ with $\beta\in(0,1)$ or $L(t)=\log(t+2)$). Then we have a growth rate of $r-\frac{\lambda^2}{2}$: thus while constant $L$ severely restricts the growth of the system, as soon as we relax $L$ slightly we regain the full growth behaviour seen in \cite{git:almost_sure_path_properties, hardy_harris:spine_bbm_large_deviations, harris_roberts:scaled_growth, lee:large_deviations_for_branching_diffusions}.
\end{ex}

\begin{ex}
Let $f(t) = \sqrt{2r}t$ and $L(t)\equiv L>0$. Then we have extinction almost surely --- and the same applies to any $f$ such that $t^{-1}\int_0^t f'(s)^2 ds \to 2r$ when we take fixed $L$.  We note that Theorems \ref{betaex} and \ref{thirdex} provide much more interesting results in the same area.
\end{ex}

\begin{ex}
Let $f(t) = \lambda(t+1)\sin(\log(t+1))$ and $L(t)\equiv L$. If $r$ is large enough then, on survival, the number of particles alive at time $t$ oscillates, with
\[\liminf_{t\to\infty}\frac{1}{t}\log|\hat{N}(t)| = r - \frac{\pi^2}{8L^2} - \frac{\lambda^2}{\sqrt5}\left(\frac{\sqrt5+1}{2}\right)\]
and
\[\limsup_{t\to\infty}\frac{1}{t}\log|\hat{N}(t)| = r - \frac{\pi^2}{8L^2} - \frac{\lambda^2}{\sqrt5}\left(\frac{\sqrt5-1}{2}\right).\]
(Note the appearance of the golden ratio.)
\end{ex}

The reason for this oscillation on the exponential scale becomes clearer when we consider the following simpler, but perhaps less natural, example.

\begin{ex}\label{nondiff}
Define a continuous function $f:[0,\infty)\to\Rb$ by setting $f(t)=0$ for $t\in[0,1]$ and
\[f'(t) = \left\{ \begin{array}{ll} 0 & \hbox{ if } \hsl 2^{2k} \leq t < 2^{2k+1} \hsl \hbox{ for some } \hsl k\in\{0,1,2,\ldots\}\\
									1 & \hbox{ if } \hsl 2^{2k+1} \leq t < 2^{2k+2} \hsl \hbox{ for some } \hsl k\in\{0,1,2,\ldots\}\end{array}\right. .\]
Then, provided that $r > \frac{1}{3} + \frac{\pi^2}{8L^2}$, on non-extinction we have
\[\liminf_{t\to\infty}\frac{1}{t}\log|\hat{N}(t)| = r - \frac{\pi^2}{8L^2} - \frac{1}{3}\]
and
\[\limsup_{t\to\infty}\frac{1}{t}\log|\hat{N}(t)| = r - \frac{\pi^2}{8L^2} - \frac{1}{6}.\]
The idea here is that the number of particles grows quickly when $f'(t)=0$, but much more slowly when $f'(t)=1$ as the steep gradient means that particles have to struggle to follow the path for a long time. As the size of the intervals $[2^n, 2^{n+1}]$ grows exponentially, the behaviour of the number of particles at time $t$ is dominated by the behaviour on the most recent such interval. [We note that this choice of $f$ is not twice differentiable; however, it can be uniformly approximated by twice differentiable functions, and it is easily checked that our results still hold - see Section \ref{extensions}.]
\end{ex}

\section{The spine setup}\label{spine_section}

\noindent
Consider a dyadic one-dimensional branching Brownian motion, branching at rate $r$, with associated probability measure $\Pb$ under which
\begin{itemize}
\item{we begin with a root particle, $\emptyset$, at 0;}
\item{if a particle $u$ is in the tree then all its ancestors are also in the tree (if $v$ is an ancestor of $u$ then we write $v < u$);}
\item{each particle $u$ has a lifetime $\sigma_u$, which is exponentially distributed with parameter $r$, and a fission time $S_u = \sum_{v\leq u}\sigma_v$;}
\item{each particle $u$ has a position $X_u(t) \in \Rb$ at each time $t\in [S_u-\sigma_u, S_u)$;}
\item{at the fission time $S_u$, $u$ has disappeared and been replaced by two children $u0$ and $u1$, which inherit the position of their parent;}
\item{given its birth time and position, each particle $u$, while alive, moves according to a standard Brownian motion started from $X_u(S_u-\sigma_u)$ independently of all other particles.}
\end{itemize}
For convenience, we extend the position of a particle $u$ to all times $t\in[0, S_u)$, to include the paths of all its ancestors:
\[X_u(t):= X_v(t) \hbox{ if } v\leq u \hbox{ and } S_v - \sigma_v  \leq t < S_v.\]
We recall that we defined $N(t)$ to be the set of particles alive at time $t$,
\[N(t):=\{u: S_u - \sigma_u \leq t < S_u\},\]
and also that
\[\hat N(t) := \left\{u \in N(t) : |X_u(s)-f(s)|< L(s) \hs \forall s\leq t\right\}.\]

We choose from our BBM one distinguished line of descent or \emph{spine} -- that is, a subset $\xi$ of the tree such that $\xi\cap N(t)$ contains exactly one particle for each $t$ and if $u\in \xi$ and $v<u$ then $v\in \xi$. We make this choice as follows:
\begin{itemize}
\item{the initial particle $\emptyset$ is in the spine;}
\item{at the fission time of node $u$ in the spine, the new spine particle is chosen uniformly at random from the two children $u0$ and $u1$ of $u$.}
\end{itemize}
We denote the position of the spine particle at time $t$ by $\xi_t$; however we may also occasionally use $\xi_t$ to refer to the spine particle itself (that is, the node of the tree that is in the spine at time $t$) --- it should be clear from the context which meaning is intended. We call the resulting probability measure (on the space of \emph{marked trees with spines}) $\Pt$. We also consider the translated probability measures $\Pb_x$ and $\Pt_x$ for $x\in\Rb$, where under $\Pb_x$ and $\Pt_x$ we start with a single particle at $x$ instead of 0.

\subsection{Filtrations}
We use three different filtrations, $\Fg_t$, $\Ft_t$ and $\Gg_t$, to encapsulate different amounts of information. We give descriptions of these filtrations here, but the reader is referred to Hardy and Harris \cite{hardy_harris:spine_approach_applications} for the full definitions.

\begin{itemize}
\item{$\Fg_t$ contains all the information about the marked tree up to time $t$. However, it does not know which particle is the spine at any point.}
\item{$\Ft_t$ contains all the information about both the marked tree and the spine up to time $t$.}
\item{$\Gg_t$ contains just the spatial information about the spine up to time $t$; it does not know anything about the rest of the tree.}
\end{itemize}
We note that $\Fg_t \subseteq \Ft_t$ and $\Gg_t \subseteq \Ft_t$, and also that $\Pt_x$ is an extension of $\Pb_x$ in that $\Pb_x = \Pt_x |_{\Fg_\infty}$.

\subsection{Martingales and a change of measure}\label{unscaled_measure_change}

Under $\Pt$, the path of the spine $(\xi_t,\hsl t\geq 0)$ is a standard Brownian motion. Set
\begin{multline*}
G(t) := \exp\left( \int_0^t f'(s) d\xi_s - \frac{1}{2}\int_0^t f'(s)^2 ds + \int_0^t \frac{\pi^2}{8L(s)^2}ds \right)\\
\cdot\exp\left(\frac{L'(t)}{2L(t)}(\xi_t-f(t))^2 - \int_0^t \left(\frac{L''(s)}{2L(s)}(\xi_s-f(s))^2 + \frac{L'(s)}{2L(s)}\right)ds \right).
\end{multline*}
We claim that the process
\[V(t) := G(t) \cos\left(\frac{\pi}{2L(t)}(\xi_t-f(t))\right), \hs t\geq0\]
is a $\Gg_t$-local martingale.

\begin{lem}\label{elocalmg}
Let
\[F(t):= \exp\left(\int_0^t \frac{\pi^2}{8L(s)^2}ds + \frac{L'(t)}{2L(t)}\xi_t^2 - \int_0^t \left(\frac{L''(s)}{2L(s)}\xi_s^2 + \frac{L'(s)}{2L(s)}\right)ds\right).\]
The process
\[U(t) := F(t) \cos\left(\frac{\pi \xi_t}{2L(t)}\right)\]
is a $\Gg_t$-local martingale.
\end{lem}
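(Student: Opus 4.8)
The plan is to prove this by a direct application of It\^o's formula under $\Pt$ --- under which $(\xi_t)_{t\geq0}$ is a standard Brownian motion with respect to $\Gg_t$ --- and to check that the resulting $dt$-term vanishes identically. Write $U(t)=e^{Y(t)}\cos(\psi(t,\xi_t))$, where $\psi(t,x):=\pi x/(2L(t))$ and
\[Y(t):=\frac{L'(t)}{2L(t)}\xi_t^2+\int_0^t\left(\frac{\pi^2}{8L(s)^2}-\frac{L''(s)}{2L(s)}\xi_s^2-\frac{L'(s)}{2L(s)}\right)ds,\]
so that $F(t)=e^{Y(t)}$ and $U(t)=F(t)\cos(\pi\xi_t/(2L(t)))$. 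Here $Y$ is a continuous semimartingale whose martingale part comes only from the term $\tfrac{L'}{2L}\xi^2$, the integral being of finite variation; the usual conditions (twice continuous differentiability of $L$ and $L>0$) are exactly what make $Y$, $\psi$ and the stochastic integrals below well-defined and It\^o's formula applicable. Since $U(t)$ is a functional of $(\xi_s)_{s\leq t}$ and of deterministic functions of $s$, it is $\Gg_t$-adapted.

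First I would compute $dY$. Applying It\^o to $\tfrac{L'(t)}{2L(t)}\xi_t^2$ and adding the finite-variation integrand, the terms involving $L''(t)\xi_t^2$ and $L'(t)/L(t)$ cancel in pairs, leaving
\[dY(t)=\left(\frac{\pi^2}{8L(t)^2}-\frac{L'(t)^2}{2L(t)^2}\xi_t^2\right)dt+\frac{L'(t)}{L(t)}\xi_t\,d\xi_t.\]
Exponentiating, the It\^o correction $\tfrac12\,d\langle Y\rangle_t=\tfrac12\frac{L'(t)^2}{L(t)^2}\xi_t^2\,dt$ exactly kills the remaining $\xi_t^2$ drift, giving
\[d\big(e^{Y(t)}\big)=e^{Y(t)}\frac{\pi^2}{8L(t)^2}\,dt+e^{Y(t)}\frac{L'(t)}{L(t)}\xi_t\,d\xi_t.\]
Separately, writing $Z(t):=\cos(\psi(t,\xi_t))$ and using $\partial_t\psi(t,x)=-\pi L'(t)x/(2L(t)^2)$, $\partial_x\psi(t,x)=\pi/(2L(t))$, It\^o gives
\[dZ(t)=\left(\frac{\pi L'(t)}{2L(t)^2}\xi_t\sin(\psi(t,\xi_t))-\frac{\pi^2}{8L(t)^2}\cos(\psi(t,\xi_t))\right)dt-\frac{\pi}{2L(t)}\sin(\psi(t,\xi_t))\,d\xi_t.\]

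Finally I would combine the pieces with the product rule, $dU=Z\,d(e^Y)+e^Y\,dZ+d\langle e^Y,Z\rangle$, where $d\langle e^Y,Z\rangle_t=-e^{Y(t)}\frac{\pi L'(t)}{2L(t)^2}\xi_t\sin(\psi(t,\xi_t))\,dt$. Collecting the three $dt$-contributions: the two terms proportional to $e^Y\frac{\pi^2}{8L^2}\cos\psi$ cancel, and the two terms proportional to $e^Y\frac{\pi L'}{2L^2}\xi\sin\psi$ cancel against the cross-variation term, so the total drift is zero. Hence $U(t)=1+\int_0^t(\,\cdots)\,d\xi_s$ for an integrand that is continuous, hence locally bounded, in $s$, so $U$ is a $\Gg_t$-local martingale as claimed. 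I do not expect a genuine obstacle here: the argument is a computation, and the real content is the bookkeeping that makes all the $\xi^2\,dt$ and $\xi\sin\psi\,dt$ terms cancel --- which is precisely what the integral terms in $F$ are engineered to do. The only subtlety worth flagging is that $F$, and hence $U$, need not be integrable, so one should claim only a local martingale; a standard localization (stopping when $|\xi_t|$, $1/L(t)$ or $|L'(t)|$ first exceeds a level $n$) makes each stopped process $U^{\,\cdot\wedge\tau_n}$ a true martingale, which suffices for the change of measure in the next subsection.
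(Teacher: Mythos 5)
Your proposal is correct and is essentially the paper's own argument: the paper likewise proves the lemma by a direct It\^o computation for $U(t)=F(t)\cos\bigl(\pi\xi_t/(2L(t))\bigr)$ under $\Pt$, writing out all the $dt$-terms and observing that they cancel, leaving only $d\xi_t$-terms, so that $U$ is a $\Gg_t$-local martingale. Your bookkeeping (splitting $U$ into $e^{Y}$ and $\cos\psi$ and using the product rule, plus the remark that one should only claim a local martingale and localize) is just a tidier organisation of the same calculation.
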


\begin{proof}
By It\^o's formula,
\begin{align*}
dU(t) &= \frac{\pi^2}{8L(t)^2} F(t) \cos\left(\frac{\pi \xi_t}{2L(t)}\right) dt\\
& \hspace{5mm} + \left(\frac{L''(t)}{2L(t)} - \frac{L'(t)^2}{2L(t)^2}\right)\xi_t^2 F(t) \cos\left(\frac{\pi \xi_t}{2L(t)}\right) dt\\
& \hspace{5mm} - \left(\frac{L''(t)}{2L(t)}\xi_t^2 + \frac{L'(t)}{2L(t)}\right) F(t) \cos\left(\frac{\pi \xi_t}{2L(t)}\right) dt\\
& \hspace{5mm} + \frac{\pi L'(t)}{2L(t)^2} \xi_t F(t) \sin\left(\frac{\pi \xi_t}{2L(t)}\right) dt \\
& \hspace{5mm} + \frac{L'(t)}{L(t)}\xi_t F(t) \cos\left(\frac{\pi \xi_t}{2L(t)}\right) d\xi_t\\
& \hspace{5mm} + \frac{\pi}{2L(t)} F(t) \sin\left(\frac{\pi \xi_t}{2L(t)}\right) d\xi_t\\
& \hspace{5mm} + \left(\frac{L'(t)}{2L(t)} + \frac{L'(t)^2}{2L(t)^2}\xi_t^2\right) F(t) \cos\left(\frac{\pi \xi_t}{2L(t)}\right)dt\\
& \hspace{5mm} - \frac{\pi^2}{8L(t)^2} F(t) \cos\left(\frac{\pi \xi_t}{2L(t)}\right)dt\\
& \hspace{5mm} - \frac{\pi L'(t)}{2L(t)^2} \xi_t F(t) \sin\left(\frac{\pi \xi_t}{2L(t)}\right) dt.\qedhere
\end{align*}
\end{proof}

\begin{lem}
The process $V(t)$, $t\geq0$ is a $\Gg_t$-local martingale.
\end{lem}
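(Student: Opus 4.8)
The plan is to deduce this from Lemma \ref{elocalmg} by absorbing the drift $f$ into a Girsanov change of measure. Write $\mathcal{E}(t) := \exp\big(\int_0^t f'(s)\,d\xi_s - \tfrac12\int_0^t f'(s)^2\,ds\big)$ and $W_t := \xi_t - f(t)$. Dividing the defining formula for $G$ by $\mathcal{E}(t)$ and reading everything in terms of $W$ gives, with essentially no computation, the identity
\[ G(t) \;=\; \mathcal{E}(t)\,\exp\!\left(\int_0^t \frac{\pi^2}{8L(s)^2}\,ds + \frac{L'(t)}{2L(t)}W_t^2 - \int_0^t\Big(\frac{L''(s)}{2L(s)}W_s^2 + \frac{L'(s)}{2L(s)}\Big)ds\right), \]
so that $V(t) = \mathcal{E}(t)\,\widetilde U(t)$, where $\widetilde U$ is exactly the process $U$ of Lemma \ref{elocalmg} with every occurrence of $\xi$ replaced by $W$.

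Next I would invoke Girsanov's theorem. Under $\Pt$ the spine $\xi$ is a standard Brownian motion, and $\mathcal{E}$ is the associated exponential $\Gg_t$-local martingale (a genuine martingale whenever, e.g., Novikov's condition holds); under the measure $\Qb$ with $d\Qb/d\Pt|_{\Gg_t} = \mathcal{E}(t)$, the process $W_t = \xi_t - f(t)$ is a standard Brownian motion (here one uses $f(0)=0$, so that the compensator $\int_0^t f'(s)\,ds$ is precisely $f(t)$), and since $f$ is deterministic $W$ generates the same filtration $\Gg_t$. Applying Lemma \ref{elocalmg} verbatim to the $\Qb$-Brownian motion $W$ in place of $\xi$ shows that $\widetilde U$ is a $\Gg_t$-local martingale under $\Qb$, and then the standard change-of-measure lemma for local martingales gives that $V = \mathcal{E}\,\widetilde U$ is a $\Gg_t$-local martingale under $\Pt$, as claimed.

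To keep everything clean even when $\mathcal{E}$ is only a local martingale (so that $\Qb$ need not be a probability measure), I would instead argue directly with It\^o's product rule, which is the same computation carried out under $\Pt$. Write $\widetilde U(t) = \phi(t,W_t)$ for the smooth $\phi$ read off from Lemma \ref{elocalmg}; that lemma is precisely the statement that $\partial_t\phi + \tfrac12\partial_{ww}\phi \equiv 0$. Since $dW_t = d\xi_t - f'(t)\,dt$ under $\Pt$, It\^o's formula gives $d\widetilde U(t) = \partial_w\phi(t,W_t)\,d\xi_t - f'(t)\,\partial_w\phi(t,W_t)\,dt$, while $d\mathcal{E}(t) = f'(t)\,\mathcal{E}(t)\,d\xi_t$ and hence $d\langle\mathcal{E},\widetilde U\rangle_t = f'(t)\,\mathcal{E}(t)\,\partial_w\phi(t,W_t)\,dt$. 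Expanding $d(\mathcal{E}\widetilde U) = \mathcal{E}\,d\widetilde U + \widetilde U\,d\mathcal{E} + d\langle\mathcal{E},\widetilde U\rangle$, the two drift contributions cancel and one is left with $dV(t) = \big(\mathcal{E}(t)\partial_w\phi(t,W_t) + f'(t)\mathcal{E}(t)\widetilde U(t)\big)\,d\xi_t$, so $V$ is a $\Gg_t$-local martingale.

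I do not expect any genuine difficulty here: the content is entirely Lemma \ref{elocalmg} together with the elementary fact that the spurious drift $-f'(t)\,\partial_w\phi\,dt$ produced by substituting $W = \xi - f$ is exactly cancelled by the cross-variation $d\langle\mathcal{E},\widetilde U\rangle$. The only points needing a little care are the routine bookkeeping in the identity $V = \mathcal{E}\widetilde U$, and — if one prefers the Girsanov route over the product-rule route — checking that $\mathcal{E}$ is a true martingale, or else localizing appropriately before changing measure.
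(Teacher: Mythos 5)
Your proposal is correct and follows essentially the same route as the paper, whose proof is a one-line remark that one may either apply It\^o's formula directly or use Girsanov's theorem in series with Lemma \ref{elocalmg}; you simply carry out both of these sketched arguments in detail, including the decomposition $V=\mathcal{E}\widetilde U$ and the cancellation of the drift against the cross-variation term. No gaps: since $f'$ is deterministic and continuous, $\mathcal{E}$ is in fact a true martingale by Novikov's condition, so even the Girsanov route needs no extra localization.
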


\begin{proof}
Again applying It\^o's formula does the trick - or one may simply apply Girsanov's theorem in series with Lemma \ref{elocalmg}.
\end{proof}

By stopping the process $V(t)$ at the first exit time of the spine particle from the tube $\{(x,t): |f(t)-x|< L(t)\}$, we obtain also that
\[\zeta(t) := V(t) \ind_{\{|f(s)-\xi_s| < L(s) \hsl \forall s\leq t\}}\]
is a $\Gg_t$-local martingale, and in fact since its size is constrained it is easily seen to be a $\Gg_t$-martingale. We call this martingale $\zeta$ the \emph{single-particle martingale}.

\begin{defn}
We define an $\Ft_t$-adapted martingale by
\[\tilde{\zeta}(t) = 2^{n(\xi,t)} \times e^{-rt} \times \zeta(t),\]
where $n(\xi,t):=|\{v: v<\xi_t\}|$ is the generation of the spine at time $t$. The proof that this process is an $\Ft_t$-martingale can be found in \cite{hardy_harris:spine_approach_applications}.

We note that if $f$ is an $\Ft_t$-measurable function then we can write:
\begin{equation}
f(t)=\sum_{u\in N_t}f_u(t) \ind_{\xi_t=u} \label{unscaled_fdecomp}
\end{equation}
where each $f_u$ is $\Fg_t$-measurable -- intuitively, if $f$ is in fact $\Gg_t$-measurable, one replaces every appearance of $\xi_t$ with $X_u(t)$: so for example
\begin{multline*}
G_u(t) := \exp\left( \int_0^t f'(s) dX_u(s) - \frac{1}{2}\int_0^t f'(s)^2 ds + \int_0^t \frac{\pi^2}{8L(s)^2}ds \right)\\
\cdot\exp\left(\frac{L'(t)}{2L(t)}(X_u(t)-f(t))^2 - \int_0^t \left(\frac{L''(s)}{2L(s)}(X_u(s)-f(s))^2 + \frac{L'(s)}{2L(s)}\right)ds \right).
\end{multline*}
It is also shown in \cite{hardy_harris:spine_approach_applications} that if we define
\[Z(t) := \sum_{u\in N(t)} e^{-rt}\zeta_u(t),\]
where $\zeta_u$ is the $\Fg_t$-adapted process defined via the representation of $\zeta$ as in (\ref{unscaled_fdecomp}), then
\[Z(t) = \Pt[\tilde \zeta(t) | \Fg_t]\]
and hence that $Z$ is an $\Fg_t$-martingale. This martingale is the main object of interest in this article.
\end{defn}

\begin{defn}
We define a new measure, $\Qt_x$, via
\[\left.\frac{d\Qt_x}{d\Pt_x}\right|_{\Ft_t} = \frac{\tilde{\zeta}(t)}{\tilde{\zeta}(0)}.\]
Also, for convenience, define $\Qb_x$ to be the projection of the measure $\Qt$ onto $\Fg_\infty$; then
\[\left.\frac{d\Qb_x}{d\Pb_x}\right|_{\Fg_t} = \frac{Z(t)}{Z(0)}.\]
\end{defn}

\begin{lem}
Under $\Qt_x$,
\begin{itemize}
\item when at position $y$ at time $t$ the spine $\xi$ moves as a Brownian motion with drift
\[f'(t) + (y-f(t))\frac{L'(t)}{L(t)} - \frac{\pi}{2L(t)} \tan\left(\frac{\pi}{2L(t)}(y - f(t))\right);\]
\item the fission times along the spine occur at an accelerated rate $2r$;
\item at the fission time of node $v$ on the spine, the single spine particle is replaced by two children, and the new spine particle is chosen uniformly from the two children;
\item the remaining child gives rise to an independent subtree, which is not part of the spine and which (along with its descendants) draws out a marked tree determined by an independent copy of the original measure $\Pb$ shifted to its position and time of birth.
\end{itemize}
\end{lem}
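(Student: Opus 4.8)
The plan is to split the Radon--Nikodym derivative $d\Qt_x/d\Pt_x|_{\Ft_t} = \tilde\zeta(t)/\tilde\zeta(0)$ into two factors --- one governing the spatial motion of the spine and one governing the branching along it --- and then to read off the four changes by invoking the general spine change-of-measure theorem of Hardy and Harris \cite{hardy_harris:spine_approach_applications}. Since $n(\xi,0)=0$ we have $\tilde\zeta(0)=\zeta(0)$, so
\[
\frac{\tilde\zeta(t)}{\tilde\zeta(0)} \;=\; \Big(2^{n(\xi,t)}e^{-rt}\Big)\cdot\frac{\zeta(t)}{\zeta(0)},
\]
where the first factor is determined by the fission times of the nodes lying on the spine and the second is $\Gg_t$-measurable. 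Neither factor depends on the Bernoulli choices made at the spine's fission times, nor on the structure of the off-spine subtrees; hence under $\Qt_x$ these ingredients keep their $\Pt_x$-law, which gives the third and fourth assertions: at each spine fission the new spine particle is still chosen uniformly from the two children, and each displaced child still initiates an independent copy of $\mathbb{P}$ from its birth position and time.

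For the spatial part, note that on the event $\{|f(s)-\xi_s|<L(s)\ \forall s\le t\}$ one has $\zeta(t)=V(t)=G(t)\cos(\tfrac{\pi}{2L(t)}(\xi_t-f(t)))$. Expanding $d\log V(t)$ using the terms of $G$ together with the It\^o computation behind Lemma \ref{elocalmg}, the $d\xi_t$-coefficient is
\[
\theta(t,\xi_t) \;=\; f'(t)+(\xi_t-f(t))\frac{L'(t)}{L(t)}-\frac{\pi}{2L(t)}\tan\!\Big(\frac{\pi}{2L(t)}(\xi_t-f(t))\Big),
\]
so that $dV(t)=V(t)\,\theta(t,\xi_t)\,d\xi_t$. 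As $\zeta$ is bounded it is a true $\Gg_t$-martingale, and Girsanov's theorem then shows that under $\Qt_x$ the spine picks up exactly the drift $\theta$, which is the first assertion. The indicator in the definition of $\zeta$ means that $\Qt_x$ charges only paths on which the spine never exits the tube; this is consistent with $\theta$, whose $\tan$ term blows up with the repelling sign as $\xi$ approaches $f(t)\pm L(t)$. In other words, the spatial change of measure is the Doob $h$-transform conditioning the spine to remain in $(f(t)-L(t),\,f(t)+L(t))$ for all time.

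For the branching part, under $\Pt_x$ the fission times of the spine nodes form a Poisson process of rate $r$, so, conditionally on the spine trajectory, $n(\xi,t)$ counts a rate-$r$ Poisson process; the factor $2^{n(\xi,t)}e^{-rt}$ is precisely the exponential (Dol\'eans) martingale that reweights such a process to rate $2r$, since it is multiplied by $2$ at each spine fission and decays at the compensating rate $2r-r=r$ in between. Hence under $\Qt_x$ the spine fissions at the accelerated rate $2r$, the second assertion. Combining the two factors and appealing to the product form of the spine change of measure in \cite{hardy_harris:spine_approach_applications} yields the lemma.

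The step I expect to be most delicate is the Girsanov computation for the spine's motion: extracting the precise drift $\theta$ requires carefully combining the martingale contributions of the $\int_0^t f'(s)\,d\xi_s$ and $\frac{L'(t)}{2L(t)}(\xi_t-f(t))^2$ terms in $G$ with the $-\tan$ contribution of the cosine factor identified in Lemma \ref{elocalmg}, and one must check that the singularity of $\tan$ at the tube boundary causes no trouble because $\zeta$ has already been stopped there. The remaining ingredients are routine applications of the established spine formalism.
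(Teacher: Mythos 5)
Your proposal is correct and follows exactly the intended argument: the paper gives no proof of this lemma, simply citing Hardy and Harris \cite{hardy_harris:spine_approach_applications}, whose general spine change-of-measure theorem factorises $\tilde\zeta(t)/\tilde\zeta(0)$ into the single-particle Girsanov part (giving the stated drift), the rate-doubling factor $2^{n(\xi,t)}e^{-rt}$, and the unchanged uniform spine choices and off-spine subtrees, just as you describe. Your Girsanov computation of the drift and the identification of $2^{n(\xi,t)}e^{-rt}$ as the exponential martingale changing the spine birth rate from $r$ to $2r$ are both accurate, so nothing further is needed.
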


\noindent
This, again, was covered in \cite{hardy_harris:spine_approach_applications}. We also use that, under $\Qt_x$, the spine remains within distance $L(t)$ of $f(t)$ for all times $t\geq0$. To see this explicitly, note that
\[\Qt_x(\xi_t\not\in\hat{N}(t)) = \Pt_x\left[\ind_{\{\xi_t\not\in\hat{N}(t)\}}\frac{\tilde\zeta(t)}{\tilde\zeta(0)}\right] = 0\]
by definition of $\tilde\zeta(t)$. All other particles, once born, move like independent standard Brownian motions but -- as under $\Pb_x$ -- we imagine them being ``killed'' instantly upon leaving the tube of radius $L$ about $f$. In reality they are still present in the system, but make no contribution to $Z$ once they have left the tube.

\begin{rmk}
Note that $\hat{N}$, and hence $Z$, $\Qt$ and various other of our constructions, depend upon the choice of function $f$ and radius $L$. Usually these will be implicit, but occasionally we shall write $\hat{N}^{f,L}$, $Z^{f,L}$ and $\Qt^{f,L}$ (and so on) to emphasise the choice of $f$ and $L$ in use at the time.
\end{rmk}

\subsection{Spine tools}

We now state the spine decomposition theorem, which will be a vital tool in our investigation. It allows us to relate the growth of the whole process to just the behaviour along the spine. For a proof (of a more general version) the reader is again referred to \cite{hardy_harris:spine_approach_applications}.

\begin{thm}[Spine decomposition]
We have the following decomposition of $Z$:
\[\Qt_x[Z(t)|\Gg_\infty] = \int_0^t 2r e^{-rs}\zeta(s) ds + e^{-rt}\zeta(t).\]
\end{thm}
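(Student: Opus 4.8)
The plan is to compute the conditional expectation directly from the construction of $\Qt_x$ as a branching Brownian motion built around its spine. Conditionally on $\Gg_\infty$ we reveal the whole spine path $(\xi_s)_{s\ge0}$; under $\Qt_x$ the fission times along the spine form a Poisson process of rate $2r$, independent of the spine's motion, and at the $i$-th such time $\tau_i$ a new particle is created at $\xi_{\tau_i}$ which, together with its descendants, draws out an independent copy of the original branching Brownian motion started from $(\xi_{\tau_i},\tau_i)$. Writing $N_i(t)$ for its time-$t$ descendants, this gives the partition $N(t)=\{\xi_t\}\cup\bigcup_{i:\tau_i\le t}N_i(t)$, hence
\[Z(t)=e^{-rt}\zeta_{\xi_t}(t)+\sum_{i:\tau_i\le t}\ \sum_{u\in N_i(t)}e^{-rt}\zeta_u(t).\]
Since every $\zeta_u$ is nonnegative (on the tube $\cos(\tfrac{\pi}{2L}(X_u-f))>0$ and $G_u>0$, while off the tube the indicator kills the term), all interchanges of sums and conditional expectations below will be legitimate by Tonelli's theorem, and one may treat each term separately. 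The first term is easy: the path feeding into $\zeta_{\xi_t}$ is the spine path itself, so $\zeta_{\xi_t}(t)=\zeta(t)$ and its contribution is $e^{-rt}\zeta(t)$, which is already $\Gg_\infty$-measurable.

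The substance is in the off-spine terms, and here I would use a factorisation of the single-particle functional along a concatenated path. For $u\in N_i(t)$ the path $(X_u(s))_{s\le t}$ agrees with the spine on $[0,\tau_i]$ and with an independent Brownian path thereafter; every time-integral in $G$ splits additively at $\tau_i$, while the boundary term $\tfrac{L'(t)}{2L(t)}(X_u(t)-f(t))^2$ and the factor $\cos(\tfrac{\pi}{2L(t)}(X_u(t)-f(t)))$ depend only on the time-$t$ data. Telescoping the quantities evaluated at $\tau_i$ I expect to obtain
\[\sum_{u\in N_i(t)}e^{-rt}\zeta_u(t)=e^{-r\tau_i}\zeta(\tau_i)\cdot\Bigl(e^{-r(t-\tau_i)}\sum_{u\in N_i(t)}\eta_u\Bigr),\]
where $\eta_u$ is the ``restarted'' $\zeta$-functional for the subtree rooted at $(\xi_{\tau_i},\tau_i)$, i.e. built with integrals from $\tau_i$ and with $\cos(\tfrac{\pi}{2L(\tau_i)}(\xi_{\tau_i}-f(\tau_i)))$ in the denominator --- the very factor sitting inside $\zeta(\tau_i)$, so that it cancels and no division by zero arises (the spine stays strictly inside the tube under $\Qt$). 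Exactly the argument that makes $Z$ a $\Pb$-martingale shows that $t\mapsto e^{-r(t-\tau_i)}\sum_{u\in N_i(t)}\eta_u$ is a genuine $\Pb_{\xi_{\tau_i}}$-martingale with value $\ind_{\{|\xi_{\tau_i}-f(\tau_i)|<L(\tau_i)\}}=1$ at time $\tau_i$. Since, given $\Gg_\infty$ and the fission times, the $i$-th subtree is an independent $\Pb_{\xi_{\tau_i}}$-branching Brownian motion, taking conditional expectations would give $\Qt_x[\sum_{u\in N_i(t)}e^{-rt}\zeta_u(t)\mid\Gg_\infty,(\tau_j)_j]=e^{-r\tau_i}\zeta(\tau_i)$ for each $\tau_i\le t$.

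It then remains to sum over $i$ and integrate out the fission times. Conditionally on $\Gg_\infty$ the points $\{\tau_i\}$ form a Poisson process of rate $2r$ and $s\mapsto\zeta(s)$ is $\Gg_\infty$-measurable, so by the tower property and Campbell's formula
\[\Qt_x\Bigl[\ \sum_{i:\tau_i\le t}\ \sum_{u\in N_i(t)}e^{-rt}\zeta_u(t)\ \Big|\ \Gg_\infty\Bigr]=\Qt_x\Bigl[\ \sum_{i:\tau_i\le t}e^{-r\tau_i}\zeta(\tau_i)\ \Big|\ \Gg_\infty\Bigr]=\int_0^t 2re^{-rs}\zeta(s)\,ds,\]
and adding back the spine term $e^{-rt}\zeta(t)$ yields the claim.

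The step I expect to be most delicate is the path-factorisation $\zeta_u(t)=\zeta(\tau_i)\,\eta_u$ together with the verification that the restarted object $e^{-r(t-\tau_i)}\sum_{u\in N_i(t)}\eta_u$ is a \emph{true} $\Pb_{\xi_{\tau_i}}$-martingale of initial value $1$; this is exactly where the additive structure of the exponent of $G$, and the fact (available to us) that $Z$ is an honest $\Pb$-martingale rather than merely a supermartingale, are really used. The other point needing care is bookkeeping with the filtrations --- $\Gg_\infty$ records the spine's spatial path but not its fission times, so one must condition on the latter first and only then integrate them out --- but this is routine. Alternatively one may simply invoke the general spine-decomposition theorem of Hardy and Harris \cite{hardy_harris:spine_approach_applications}, of which the statement here is the instance corresponding to the martingale $\tilde\zeta$.
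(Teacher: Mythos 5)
Your argument is correct, but note that the paper does not prove this theorem at all: it simply refers to Hardy and Harris \cite{hardy_harris:spine_approach_applications} for a proof of a more general version. What you have written is essentially a self-contained rendition of that standard spine-decomposition proof: split $N(t)$ into the spine particle plus the subtrees immigrating at the spine's fission times $\tau_i$; factorise $\zeta_u(t)=\zeta(\tau_i)\,\eta_u$ along the concatenated path (the stochastic and Lebesgue integrals in $G$ split additively at $\tau_i$, the boundary and cosine terms at $\tau_i$ cancel against those inside $\zeta(\tau_i)$, which is legitimate since under $\Qt_x$ the spine stays strictly inside the tube); observe that each restarted subtree object is a mean-one $\Pb$-martingale started from $(\xi_{\tau_i},\tau_i)$, so its conditional contribution is $e^{-r\tau_i}\zeta(\tau_i)$; and finally integrate out the rate-$2r$ Poisson process of fission times via Campbell's formula. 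Two points deserve explicit justification rather than assertion: (i) the independence, under $\Qt_x$, of the fission times from the spine's spatial path (and hence from $\Gg_\infty$) --- the lemma in Section \ref{unscaled_measure_change} only states the rate $2r$, and the independence comes from the product structure of the change of measure, i.e.\ from the construction in \cite{hardy_harris:spine_approach_applications}; and (ii) the claim that the time-shifted, recentred versions of $Z$ (the $Z^{f_s,L_s}$-type objects started from an interior point of the tube) are honest martingales of initial value one, which again is the Hardy--Harris result rather than something proved in this paper, though it is exactly the property the paper itself relies on later (for instance for the processes $Z_v$ in Proposition \ref{invspineprop}). With those two ingredients quoted, your computation gives a complete and correct proof; the paper's route is simply to cite the general theorem, which is what your closing remark also offers as an alternative.
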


The spine decomposition is usually used in conjunction with a result like the following -- a proof of a more general form of this lemma can be found in \cite{lyons_peres:probability_on_trees}.

\begin{lem}
\label{stdmeas}
Let $Z(\infty)=\limsup_{t\to\infty} Z(t)$. Then
\[\Qb \ll \Pb \hs \Leftrightarrow \hs Z(\infty)<\infty \hs \Qb\hbox{-a.s. } \Leftrightarrow \hs \Qb = Z(\infty) \Pb\]
and
\[\Qb \perp \Pb \hs \Leftrightarrow \hs Z(\infty)=\infty \hs \Qb\hbox{-a.s. } \Leftrightarrow \hs \Pb[Z(\infty)]=0.\]
\end{lem}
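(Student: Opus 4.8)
\section*{Proof of Lemma~\ref{stdmeas} (plan)}

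The plan is the standard reference-measure argument for identifying the Lebesgue decomposition of one measure with respect to another along a filtration. Note first that $Z(0)=1$ (since $f(0)=0$ and $\xi_0=0$ force $\zeta(0)=1$), so $Z$ is a nonnegative $\Fg_t$-martingale under $\Pb$ with $\Pb[Z(t)]=1$; hence it converges $\Pb$-almost surely to a finite limit, so $\limsup_t Z(t)$ coincides $\Pb$-a.s. with $\lim_t Z(t)=:Z(\infty)$, which is finite $\Pb$-a.s., with $\Pb[Z(\infty)]\leq1$ by Fatou. In particular every event on which $Z(\infty)=\infty$ is $\Pb$-null, a fact we use below.

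Now introduce the reference measure $\mathbb{M}:=\Pb+\Qb$ on $(\Omega,\Fg_\infty)$, so that $\Pb\ll\mathbb{M}$ and $\Qb\ll\mathbb{M}$. Set $Y_t:=(d\Pb/d\mathbb{M})|_{\Fg_t}\in[0,1]$, so that $(d\Qb/d\mathbb{M})|_{\Fg_t}=1-Y_t$ and, on $\{Y_t>0\}$ (a set of full $\Pb$-measure), $Z(t)=(1-Y_t)/Y_t$. Since $Y_t=\mathbb{M}[(d\Pb/d\mathbb{M})\mid\Fg_t]$ is a bounded $\mathbb{M}$-martingale, Lévy's upward convergence theorem gives $Y_t\to Y_\infty:=d\Pb/d\mathbb{M}$ $\mathbb{M}$-a.s., hence both $\Pb$-a.s. and $\Qb$-a.s. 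As $Y_t>0$ $\mathbb{M}$-a.s. for every $t$, it follows that $Z(t)\to(1-Y_\infty)/Y_\infty\in[0,\infty]$ $\mathbb{M}$-a.s.; so the equality $\limsup_t Z(t)=\lim_t Z(t)=Z(\infty)$ also holds $\Qb$-a.s., and moreover, $\Qb$-a.s., $\{Z(\infty)=\infty\}=\{Y_\infty=0\}$. Note that $\{Y_\infty=0\}$ is $\Pb$-null since $\Pb(\{Y_\infty=0\})=\mathbb{M}[\ind_{\{Y_\infty=0\}}Y_\infty]=0$.

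Next, for $A\in\Fg_\infty$, using $\Pb(A)=\mathbb{M}[\ind_A Y_\infty]$ and splitting according to whether $Y_\infty>0$,
\[\Qb(A)=\mathbb{M}\big[\ind_A(1-Y_\infty)\big] = \Pb\big[\ind_A Z(\infty)\big] + \Qb\big(A\cap\{Y_\infty=0\}\big).\]
This is exactly the Lebesgue decomposition of $\Qb$ with respect to $\Pb$: the absolutely continuous part is $Z(\infty)\,\Pb$ and the singular part is $\Qb(\,\cdot\,\cap\{Y_\infty=0\})=\Qb(\,\cdot\,\cap\{Z(\infty)=\infty\})$, which is $\Pb$-singular because $\{Y_\infty=0\}$ is $\Pb$-null. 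From this all the stated equivalences follow by bookkeeping: $\Qb\ll\Pb$ iff the singular part vanishes iff $\Qb(\{Z(\infty)=\infty\})=0$ iff $Z(\infty)<\infty$ $\Qb$-a.s., and in that case $\Qb=Z(\infty)\Pb$ (the converse being immediate); while $\Qb\perp\Pb$ iff the absolutely continuous part vanishes iff $Z(\infty)=0$ $\Pb$-a.s. iff $\Pb[Z(\infty)]=0$, and since the displayed identity with $A=\{Z(\infty)<\infty\}$ gives $\Qb(\{Z(\infty)<\infty\})=\Pb[\ind_{\{Z(\infty)<\infty\}}Z(\infty)]=\Pb[Z(\infty)]$ (using $Z(\infty)<\infty$ $\Pb$-a.s.), this is the same as $Z(\infty)=\infty$ $\Qb$-a.s.

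The only step that is not pure formalism is the passage from the $\Fg_t$-by-$\Fg_t$ absolute continuity to the decomposition on $\Fg_\infty$ --- i.e. recognising that $d\Qb_{\mathrm{ac}}/d\Pb$ is the almost sure limit $Z(\infty)$ and that the singular mass sits on $\{Z(\infty)=\infty\}$ --- and this is precisely what the reference-measure trick combined with Lévy's theorem delivers; everything else is careful handling of null sets. A more general statement is proved in \cite{lyons_peres:probability_on_trees}; alternatively, the $\Qb$-a.s. convergence of $Z$ can be obtained directly by checking that $1/Z$ is a nonnegative $\Qb$-supermartingale, which uses that the nonnegative $\Pb$-martingale $Z$ is absorbed at $0$, so that $\{Z(t)>0\}\subseteq\{Z(s)>0\}$ up to $\Pb$-null sets whenever $s<t$.
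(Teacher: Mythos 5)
Your proof is correct. The paper does not actually prove Lemma \ref{stdmeas}: it cites Lyons and Peres for a more general statement, and the argument given there is precisely the reference-measure/Lebesgue-decomposition argument you reproduce (take $\mathbb{M}=\Pb+\Qb$, identify the absolutely continuous part of $\Qb$ as $Z(\infty)\,\Pb$ and the singular part as the mass $\Qb$ places on $\{Z(\infty)=\infty\}$, which sits on a $\Pb$-null set). So you have in effect supplied the standard proof the paper omits, with the only point deserving a word of care being the identification of $\limsup_t Z(t)$ with the limit $(1-Y_\infty)/Y_\infty$ simultaneously over continuous time, which is handled by the usual path-regularity/countable-skeleton remark.
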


Another extremely useful spine tool is the \emph{many-to-one} theorem. A much more general version of this theorem is proved in \cite{hardy_harris:spine_approach_applications}, but the following version will be enough for our purposes.

\begin{thm}[Many-to-One]
\label{many_to_one}
If $f(t)$ is $\Gg_t$-measurable for each $t\geq0$ with representation \emph{(\ref{unscaled_fdecomp})}, then
\[\Pb\left[\sum_{u\in N(t)} f_u(t)\right] = e^{rt}\Pt[f(t)].\]
\end{thm}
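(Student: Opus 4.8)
The plan is to insert a spine into the unweighted sum $\sum_{u\in N(t)} f_u(t)$ using only the conditional law of the spine given the tree, and then to exploit the fact that, under $\Pt$, the spine's spatial path is independent of the number of fissions occurring along it. Throughout I would first argue for nonnegative $f$ (so that all interchanges of sum and expectation, and all conditionings, are legitimate as identities in $[0,\infty]$), and then extend by linearity to any $f$ with $\Pb[\sum_{u\in N(t)}|f_u(t)|]<\infty$.

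First I would recall from the construction of $\Pt$ that the spine is chosen by picking uniformly between the two children at each fission, so that for $u\in N(t)$ we have $\Pt(\xi_t=u\mid\Fg_t)=2^{-n(u)}$, where $n(u):=|\{v:v<u\}|$ is the generation of $u$, and note that $n(\xi,t)=n(u)$ on $\{\xi_t=u\}$. Since each $f_u(t)$ and each $n(u)$ is $\Fg_t$-measurable, and since $f(t)=f_u(t)$ and $2^{n(\xi,t)}=2^{n(u)}$ on $\{\xi_t=u\}$ by the representation (\ref{unscaled_fdecomp}), conditioning on $\Fg_t$ gives
\[\Pt\left[f(t)\,2^{n(\xi,t)}\,\big|\,\Fg_t\right] = \sum_{u\in N(t)} f_u(t)\,2^{n(u)}\,\Pt(\xi_t=u\mid\Fg_t) = \sum_{u\in N(t)} f_u(t).\]
Taking $\Pt$-expectations, and using that $\sum_{u\in N(t)} f_u(t)$ is $\Fg_t$-measurable together with $\Pb=\Pt|_{\Fg_\infty}$, this yields $\Pb\big[\sum_{u\in N(t)} f_u(t)\big] = \Pt\big[f(t)\,2^{n(\xi,t)}\big]$.

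It then remains to evaluate $\Pt\big[f(t)\,2^{n(\xi,t)}\big]$. Under $\Pt$ the spatial path $(\xi_s)_{s\leq t}$ is a standard Brownian motion, while $n(\xi,t)$ counts the fissions along the spine, which occur at the constant rate $r$ regardless of position; hence $n(\xi,t)$ is a Poisson$(rt)$ random variable independent of $(\xi_s)_{s\leq t}$. Because $f(t)$ is $\Gg_t$-measurable it is a functional of $(\xi_s)_{s\leq t}$ alone, so it is independent of $n(\xi,t)$, giving
\[\Pt\big[f(t)\,2^{n(\xi,t)}\big] = \Pt[f(t)]\;\Pt\big[2^{n(\xi,t)}\big] = e^{rt}\,\Pt[f(t)],\]
where the last equality is the Poisson generating function $\Pt[z^{n(\xi,t)}]=e^{rt(z-1)}$ evaluated at $z=2$. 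Combining the two displays gives the claim.

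Once the spine construction is in hand the argument is essentially bookkeeping, so I do not expect a genuine obstacle. The only points that require care are the measurability and integrability housekeeping mentioned above, and the precise justification of the two structural facts being used: that $\Pt(\xi_t=u\mid\Fg_t)=2^{-n(u)}$, and that under $\Pt$ the count $n(\xi,t)$ is Poisson$(rt)$ independent of the spine's path. Both are immediate from the definition of the measure $\Pt$ on marked trees with spines, but they are exactly the properties a fully rigorous proof must invoke; alternatively one may simply cite the general many-to-one theorem of Hardy and Harris \cite{hardy_harris:spine_approach_applications}, of which the stated version is a special case.
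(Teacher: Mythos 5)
Your argument is correct. Both structural facts you invoke do hold in this setting: given the tree, the spine makes an independent fair choice at each fission along its line of descent, so $\Pt(\xi_t=u\mid\Fg_t)=2^{-n(u)}$ (and $\sum_{u\in N(t)}2^{-n(u)}=1$ identically, since each fission replaces $2^{-n}$ by two copies of $2^{-(n+1)}$); and under $\Pt$ the fission times along the spine form a rate-$r$ Poisson process independent of the spine's Brownian path, so $n(\xi,t)$ is Poisson$(rt)$ and independent of $\Gg_t$, giving $\Pt[2^{n(\xi,t)}]=e^{rt}$. The projection step $\Pb[\,\cdot\,]=\Pt[\,\cdot\,]$ for $\Fg_t$-measurable quantities is exactly the compatibility $\Pb=\Pt|_{\Fg_\infty}$ stated in the paper. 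The only difference from the paper is that the paper does not prove this lemma at all: it simply cites the much more general many-to-one theorem of Hardy and Harris, whose proof for general measure changes and offspring laws goes through the spine martingale machinery. Your route is a self-contained, elementary derivation of the special case actually needed here (constant branching rate, dyadic offspring, no change of measure), obtained by conditioning on $\Fg_t$ and using the Poisson/Brownian independence along the spine; what you lose relative to the citation is generality, and what you gain is a transparent two-line identity $\Pb[\sum_{u\in N(t)}f_u(t)]=\Pt[f(t)2^{n(\xi,t)}]=e^{rt}\Pt[f(t)]$ whose hypotheses ($\Gg_t$-measurability of $f(t)$, nonnegativity or absolute integrability for the Fubini/conditioning steps) are exactly the ones you flag.
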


We have one more lemma, a proof of which can be found in \cite{harris_roberts:extinction_letter}. Although this result is extremely simple --- and essential to our study --- we are not aware of its presence in the literature before \cite{harris_roberts:extinction_letter}.

\begin{lem}
\label{extinction_lem}
For any $t\in[0,\infty]$ (note that infinity is included here), we have
\[\Pb_x(Z(t)>0) = \Qb_x\left[\frac{Z(0)}{Z(t)}\right].\]
\end{lem}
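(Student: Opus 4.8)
The plan is to exploit the change of measure $d\Qb_x/d\Pb_x|_{\Fg_t} = Z(t)/Z(0)$ directly. The key observation is that $\{Z(t) > 0\}$ is exactly the event on which this Radon--Nikodym derivative is strictly positive, so we should be able to recover $\Pb_x(Z(t)>0)$ by integrating the reciprocal of the density against $\Qb_x$. Concretely, for finite $t$ the density on $\Fg_t$ is $Z(t)/Z(0)$, and on the event $\{Z(t) = 0\}$ this density vanishes, meaning $\Qb_x$ assigns no mass there; hence $\Qb_x(Z(t) > 0) = 1$, and on this full-measure event $Z(0)/Z(t)$ is well-defined. Then
\[
\Qb_x\!\left[\frac{Z(0)}{Z(t)}\right] = \Qb_x\!\left[\frac{Z(0)}{Z(t)}\ind_{\{Z(t)>0\}}\right] = \Pb_x\!\left[\frac{Z(t)}{Z(0)}\cdot\frac{Z(0)}{Z(t)}\ind_{\{Z(t)>0\}}\right] = \Pb_x(Z(t)>0),
\]
where the middle step is just the definition of the change of measure applied to the $\Fg_t$-measurable bounded random variable $\tfrac{Z(0)}{Z(t)}\ind_{\{Z(t)>0\}}$ (it is bounded since $Z(t) \geq$ some positive quantity on its support — or more carefully, one truncates and uses monotone convergence, see below).

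First I would fix finite $t$ and verify that $\Qb_x(Z(t) = 0) = 0$: since $\Qb_x|_{\Fg_t} \ll \Pb_x|_{\Fg_t}$ with density $Z(t)/Z(0)$, and $Z(0)$ is a deterministic positive constant (it equals $\zeta(0) = \cos(0) = 1$ when $x=f(0)$; in general a fixed positive number under $\Pb_x$), the event $\{Z(t)=0\}$ has $\Qb_x$-measure $\Pb_x[\,Z(t)/Z(0)\cdot\ind_{\{Z(t)=0\}}\,] = 0$. Next I would write $\Pb_x(Z(t)>0) = \Pb_x[\ind_{\{Z(t)>0\}}]$ and, to avoid dividing by something that could be arbitrarily small, introduce the bounded approximants $\ind_{\{Z(t)>0\}} = \lim_{n\to\infty}\big(1 \wedge \tfrac{Z(t)}{Z(0)}\cdot n\big)$ — no, more cleanly: apply the change-of-measure identity to the bounded $\Fg_t$-measurable function $\tfrac{Z(0)}{Z(t)}\wedge n$ restricted to $\{Z(t)>0\}$, obtaining $\Qb_x\big[(\tfrac{Z(0)}{Z(t)}\wedge n)\ind_{\{Z(t)>0\}}\big] = \Pb_x\big[\tfrac{Z(t)}{Z(0)}(\tfrac{Z(0)}{Z(t)}\wedge n)\ind_{\{Z(t)>0\}}\big]$, then let $n\to\infty$ using monotone convergence on both sides — the right-hand side increases to $\Pb_x(Z(t)>0)$ and the left-hand side to $\Qb_x[Z(0)/Z(t)]$ (recalling $\Qb_x$ sits on $\{Z(t)>0\}$).

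For the case $t = \infty$: here $Z(\infty) = \limsup_{s\to\infty} Z(s)$ and one no longer has a clean density on $\Fg_\infty$, so I would invoke Lemma~\ref{stdmeas}. On the set $\{Z(\infty) < \infty\}$ one has $d\Qb_x/d\Pb_x = Z(\infty)$ (after normalising by $Z(0)$), while $\Qb_x$ may have a singular part carried by $\{Z(\infty) = \infty\}$, on which $Z(0)/Z(\infty)$ should be read as $0$. Writing $\Qb_x[Z(0)/Z(\infty)] = \Qb_x[\tfrac{Z(0)}{Z(\infty)}\ind_{\{Z(\infty)<\infty\}}]$ and applying the Lebesgue decomposition $\Qb_x = Z(\infty)\,\Pb_x + (\text{singular part})$, the singular part contributes nothing and we get $\Qb_x[\tfrac{Z(0)}{Z(\infty)}\ind_{\{0<Z(\infty)<\infty\}}] = \Pb_x[\tfrac{Z(\infty)}{Z(0)}\cdot\tfrac{Z(0)}{Z(\infty)}\ind_{\{0<Z(\infty)<\infty\}}] = \Pb_x(0 < Z(\infty) < \infty)$, and then I would argue $\Pb_x(Z(\infty) = \infty) = 0$ (this follows since $Z$ is a nonnegative $\Pb_x$-martingale, hence converges $\Pb_x$-a.s. to a finite limit, so $Z(\infty) = \lim Z(s) < \infty$ $\Pb_x$-a.s.), giving $\Pb_x(Z(\infty)>0)$ as required.

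The main obstacle is the integrability/truncation bookkeeping: $Z(0)/Z(t)$ is not bounded, so one cannot naively plug it into the change-of-measure formula, and one must be careful that the monotone convergence argument is legitimate — in particular that $\Qb_x$ genuinely puts no mass on $\{Z(t)=0\}$ (for finite $t$) and correctly accounting for the singular part of $\Qb_x$ on $\{Z(\infty)=\infty\}$ (for $t=\infty$). Once the truncation is set up and Lemma~\ref{stdmeas} is available, the computation is a short manipulation of the Radon--Nikodym derivative and everything collapses to the stated identity.
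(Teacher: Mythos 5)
Your argument is correct, and it is essentially the standard proof: the paper itself does not prove this lemma in-text but defers to \cite{harris_roberts:extinction_letter}, where the identity is obtained by exactly this kind of Radon--Nikodym manipulation (truncate, use the density $Z(t)/Z(0)$ on $\Fg_t$, pass to the limit, and for $t=\infty$ use the decomposition of $\Qb_x$ into an absolutely continuous part with density $Z(\infty)/Z(0)$ and a singular part carried on $\{Z(\infty)=\infty\}$). Two small points of bookkeeping: Lemma \ref{stdmeas} as stated in this paper only records the two extreme dichotomies, so for the $t=\infty$ case you genuinely need the stronger eventwise decomposition $\Qb_x(A)=\Pb_x\bigl[\tfrac{Z(\infty)}{Z(0)}\ind_A\bigr]+\Qb_x\bigl(A\cap\{Z(\infty)=\infty\}\bigr)$, which is the ``more general form'' available in the cited Lyons--Peres reference; and you should note explicitly that this same decomposition gives $\Qb_x(Z(\infty)=0)=0$ (not just $\Qb_x(Z(t)=0)=0$ for finite $t$), so that $Z(0)/Z(\infty)$ is $\Qb_x$-a.s.\ well defined and your silent insertion of the restriction $\{0<Z(\infty)<\infty\}$ is legitimate. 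With those two remarks made, the proof is complete.
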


\section{Almost sure growth along paths}

\subsection{Controlling the measure change}
Before applying the tools that we have developed, we need the following short lemma to keep the Girsanov part of our change of measure under control.

\begin{lem}
\label{unscaled_int_lem}
For any $u\in\hat{N}(t)$, almost surely under both $\Pt_x$ and $\Qt_x$ we have
\[\left|\int_0^t f'(s) dX_u(s) - \int_0^t f'(s)^2 ds\right| \leq |f'(t)|L(t) + |f'(0)|x + \int_0^t |f''(s)|L(s) ds\]
and hence under $\Pt$
\begin{multline}\label{gineq}
\exp\left(\frac{1}{2}\int_0^t f'(s)^2 ds + \int_0^t \frac{\pi^2}{8L(s)^2}ds - \int_0^t \frac{L'(s)}{2L(s)}ds - E(t)\right)\\
\leq G_u(t)\leq \exp\left(\frac{1}{2}\int_0^t f'(s)^2 ds + \int_0^t \frac{\pi^2}{8L(s)^2}ds - \int_0^t \frac{L'(s)}{2L(s)}ds + E(t)\right).
\end{multline}
\end{lem}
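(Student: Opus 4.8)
The plan is to prove the integral bound by integration by parts, and then deduce the bound on $G_u(t)$ by substituting it into the definition of $G_u$ together with the elementary inequality $\cos\le 1$-type trivialities (here rather: bounding the last exponential factor in $G_u$). Start with the first displayed inequality. Since $u\in\hat N(t)$, we have $|X_u(s)-f(s)|<L(s)$ for all $s\le t$, and in particular the path $s\mapsto X_u(s)$ is (a.s., under either $\Pt_x$ or $\Qt_x$) a continuous semimartingale — indeed a Brownian-type path — so It\^o's formula / integration by parts applies to $f'(s)X_u(s)$. Writing
\[
\int_0^t f'(s)\,dX_u(s) = f'(t)X_u(t) - f'(0)X_u(0) - \int_0^t f''(s)X_u(s)\,ds,
\]
and similarly, since $f$ is $C^2$,
\[
\int_0^t f'(s)^2\,ds = \int_0^t f'(s)\,df(s) = f'(t)f(t) - f'(0)f(0) - \int_0^t f''(s)f(s)\,ds,
\]
I subtract the two identities. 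Using $f(0)=0$ (condition (I)) and $X_u(0)=x$, the difference is
\[
f'(t)\bigl(X_u(t)-f(t)\bigr) - f'(0)x - \int_0^t f''(s)\bigl(X_u(s)-f(s)\bigr)\,ds.
\]
Now bound each term in absolute value using $|X_u(s)-f(s)|<L(s)$: the first term is at most $|f'(t)|L(t)$, the second is $|f'(0)|x$, and the integral is at most $\int_0^t |f''(s)|L(s)\,ds$. This gives exactly the claimed estimate.

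For the second part, recall
\[
G_u(t) = \exp\!\Bigl(\int_0^t f'(s)\,dX_u(s) - \tfrac12\int_0^t f'(s)^2\,ds + \int_0^t \tfrac{\pi^2}{8L(s)^2}\,ds\Bigr)\cdot \exp\!\Bigl(\tfrac{L'(t)}{2L(t)}(X_u(t)-f(t))^2 - \int_0^t\bigl(\tfrac{L''(s)}{2L(s)}(X_u(s)-f(s))^2 + \tfrac{L'(s)}{2L(s)}\bigr)ds\Bigr).
\]
I rewrite the first exponent as $\tfrac12\int_0^t f'(s)^2\,ds + \int_0^t \tfrac{\pi^2}{8L(s)^2}\,ds + \bigl(\int_0^t f'(s)\,dX_u(s) - \int_0^t f'(s)^2\,ds\bigr)$, so the already-proved bound controls the bracketed correction by $\pm(|f'(t)|L(t)+|f'(0)|x+\int_0^t|f''(s)|L(s)\,ds)$. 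For the second exponent, I again use $|X_u(s)-f(s)|<L(s)$ to get $\bigl|\tfrac{L'(t)}{2L(t)}(X_u(t)-f(t))^2\bigr| \le \tfrac12|L'(t)|L(t)$ and $\bigl|\int_0^t \tfrac{L''(s)}{2L(s)}(X_u(s)-f(s))^2\,ds\bigr|\le \tfrac12\int_0^t|L''(s)|L(s)\,ds$, while the leftover $-\int_0^t\tfrac{L'(s)}{2L(s)}\,ds$ is pulled out explicitly as in the statement. Collecting all the error terms, the total deviation of $\log G_u(t)$ from $\tfrac12\int_0^t f'(s)^2\,ds + \int_0^t\tfrac{\pi^2}{8L(s)^2}\,ds - \int_0^t\tfrac{L'(s)}{2L(s)}\,ds$ is at most
\[
|f'(t)|L(t) + |f'(0)|x + \int_0^t|f''(s)|L(s)\,ds + \tfrac12|L'(t)|L(t) + \tfrac12\int_0^t|L''(s)|L(s)\,ds = E(t) + |f'(0)|x.
\]
Since $f(0)=0$ the stated inequality \eqref{gineq} is written with $x=0$ (the single-particle martingale is rooted at the origin there); exponentiating gives the result.

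The only genuinely delicate point is justifying the integration-by-parts / It\^o step cleanly for the ancestral path $s\mapsto X_u(s)$: this path is a concatenation of independent Brownian pieces across fission times, so I should note that it is a.s.\ a continuous semimartingale (in fact $\langle X_u\rangle_s = s$) under both $\Pt_x$ and $\Qt_x$, hence the stochastic integral $\int_0^t f'(s)\,dX_u(s)$ is well-defined and the integration-by-parts formula $d(f'(s)X_u(s)) = f'(s)\,dX_u(s) + f''(s)X_u(s)\,ds$ holds (no quadratic-covariation correction since $f'$ is of bounded variation). Everything else — the deterministic integration by parts for $\int f'^2$, and the term-by-term bounding using membership in $\hat N(t)$ — is routine. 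One should also remark that the bounds hold simultaneously for all $u\in\hat N(t)$ on a single full-measure event, which is immediate since there are only countably many particles and each contributes an a.s.\ statement.
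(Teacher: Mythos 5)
Your proof is correct and follows essentially the same route as the paper: It\^o integration by parts for $f'(s)X_u(s)$, ordinary integration by parts for $\int_0^t f'(s)^2\,ds$, subtraction and term-by-term bounding via $|X_u(s)-f(s)|<L(s)$, then substitution into the definition of $G_u(t)$ (your explicit bounding of the $L'$ and $L''$ terms, and the remark that $x=0$ under $\Pt$, are exactly what the paper leaves implicit in ``plugging this estimate in'').
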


\begin{proof}
From the integration by parts formula for It\^o calculus, we know that
\[f'(t)X_u(t) = f'(0)X_u(0) + \int_0^t f''(s)X_u(s) ds + \int_0^t f'(s)dX_u(s).\]
From ordinary integration by parts,
\[\int_0^t f'(s)^2 ds = f'(t)f(t) - f'(0)f(0) - \int_0^t f(s)f''(s) ds.\]
We also note that if $u\in\hat N(t)$ then $|X_u(s) - f(s)|< L(s)$ for all $s\leq t$. Thus
\begin{align*}
\biggl|&\int_0^t f'(s) dX_u(s) - \int_0^t f'(s)^2 ds\hspace{0.5mm}\biggr|\\
&= \biggl|\hspace{0.5mm}f'(t)(X_u(t)-f(t)) - f'(0)(X_u(0)-f(0)) - \int_0^t f''(s)(X_u(s)-f(s))ds\hspace{0.5mm}\biggr|\\
&\leq |f'(t)|L(t) + |f'(0)|x + \int_0^t |f''(s)|L(s)ds.
\end{align*}
Plugging this estimate into the definition of $G_u(t)$ gives the result.
\end{proof}

We are now ready to prove our first real result.

\begin{prop}\label{uiprop}
Recall that $Z(\infty):= \limsup_{t\to\infty}Z(t)$. If $S < 0$, then the process almost surely becomes extinct in finite time (and hence we have $Z(\infty)=0$). In this case,
\[\frac{\log \Pb(\hat N(t) \neq \emptyset)}{\inf_{s\leq t}\int_0^s (r - \frac{\pi^2}{8L(u)^2}-\frac{1}{2}f'(u)^2 + \frac{L'(u)}{2L(u)})du} \hs\longrightarrow\hs 1.\]
Alternatively, if $S > 0$ then $\Pb[Z(\infty)]=1$.
\end{prop}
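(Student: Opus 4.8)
Write $\Lambda(t) := \int_0^t \left(r - \tfrac12 f'(u)^2 - \tfrac{\pi^2}{8L(u)^2} + \tfrac{L'(u)}{2L(u)}\right)du$, so that $S = \liminf_{t\to\infty}\Lambda(t)/t$ and, by Lemma \ref{unscaled_int_lem}, $e^{-rs}\zeta(s) \le e^{-\Lambda(s)+E(s)}$ and $e^{-rs}\zeta_u(s)\le e^{-\Lambda(s)+E(s)}$ for every $u\in\hat N(s)$, both $\Pt$- and $\Qt$-almost surely. Recall also that $Z(0)=1$, that $Z$ is a nonnegative $\Pb$-martingale with $Z(t)>0$ if and only if $\hat N(t)\ne\emptyset$, and that Lemma \ref{extinction_lem} gives $\Pb(\hat N(t)\ne\emptyset) = \Qt[1/Z(t)]$. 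I would treat the two cases separately; the quantitative control of $\Pb(\hat N(t)\ne\emptyset)$ when $S<0$ is the heart of the matter.

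For $S>0$: by Lemma \ref{stdmeas} it is enough to show $Z(\infty)<\infty$ $\Qb$-a.s. First, since $Z(t)>0$ $\Qb$-a.s.\ and $\Qb[\ind_{Z(t)>0}/Z(t)\mid\Fg_s] = Z(s)^{-1}\Pb(Z(t)>0\mid\Fg_s)\le Z(s)^{-1}$, the process $1/Z(t)$ is a nonnegative $\Qb$-supermartingale, so it converges $\Qb$-a.s.; hence $Z(t)\to Z(\infty)\in(0,\infty]$ and $Z(\infty)=\liminf_t Z(t)$ $\Qb$-a.s. Conditional Fatou together with the spine decomposition then gives
\[\Qt[Z(\infty)\mid\Gg_\infty] \le \liminf_{t\to\infty}\Qt[Z(t)\mid\Gg_\infty] = \int_0^\infty 2r e^{-rs}\zeta(s)\,ds + \lim_{t\to\infty}e^{-rt}\zeta(t),\]
and since $e^{-rs}\zeta(s)\le e^{-\Lambda(s)+E(s)}\le e^{-Ss/4}$ for all large $s$ (using $S>0$ and $E(s)=o(s)$ from condition (III)), the right-hand side is finite $\Qt$-a.s. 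Thus $Z(\infty)<\infty$ $\Qb$-a.s., so by Lemma \ref{stdmeas} $\Qb = Z(\infty)\Pb$ and $\Pb[Z(\infty)]=1$.

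For $S<0$ I would control $\Pb(\hat N(t)\ne\emptyset)$ from both sides. For the lower bound, Jensen applied under $\Qt$ gives $\Pb(\hat N(t)\ne\emptyset) = \Qt[1/Z(t)]\ge 1/\Qt[Z(t)]$, and taking $\Qt$-expectations in the spine decomposition and using the $\Qt$-a.s.\ Girsanov bound,
\[\Qt[Z(t)] = \int_0^t 2r\,\Qt[e^{-rs}\zeta(s)]\,ds + \Qt[e^{-rt}\zeta(t)] \le (2rt+1)\,e^{-\inf_{s\le t}(\Lambda(s)-E(s))},\]
whence $\log\Pb(\hat N(t)\ne\emptyset) \ge \inf_{s\le t}\Lambda(s) - \sup_{s\le t}E(s) - \log(2rt+1)$. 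For the upper bound, since $\{\hat N(t)\ne\emptyset\}$ is decreasing in $t$, for the time $\tau_t\le t$ attaining $\inf_{s\le t}\Lambda(s)$ we have $\Pb(\hat N(t)\ne\emptyset)\le\Pb(\hat N(\tau_t)\ne\emptyset)\le\Pb[|\hat N(\tau_t)|]$, and I would bound $\Pb[|\hat N(s)|]$ by a cosine-weighted many-to-one: with $\cos_u(s):=\cos\!\left(\tfrac{\pi}{2L(s)}(X_u(s)-f(s))\right)$, Theorem \ref{many_to_one} and the lower bound on $G(s)$ from Lemma \ref{unscaled_int_lem}, together with $\Pt[\zeta(s)]=1$, give $\Pb[\sum_{u\in\hat N(s)}\cos_u(s)] = e^{rs}\Pt[\zeta(s)/G(s)]\le e^{\Lambda(s)+E(s)}$; restricting to the sub-tube of radius $(1-\delta)L(s)$, where $\cos_u(s)>\sin(\tfrac{\pi\delta}{2})$, yields $\Pb[|\hat N^\delta(s)|]\le e^{\Lambda(s)+E(s)}/\sin(\tfrac{\pi\delta}{2})$, and a short uniform boundary estimate (under $\Qt$ the spine feels the strongly repulsive drift $-\tfrac{\pi}{2L}\tan(\cdots)$, so only an $O(\delta)$ fraction of the mass of the spine conditioned to remain in the tube lies within $\delta L(s)$ of the boundary, uniformly in $s$) upgrades this to $\Pb[|\hat N(s)|]\le C_\delta\,e^{\Lambda(s)+E(s)}$. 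Hence $\log\Pb(\hat N(t)\ne\emptyset)\le\inf_{s\le t}\Lambda(s) + E(\tau_t) + O_\delta(1)$.

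Since $S<0$ forces $\inf_{s\le t}\Lambda(s)\to-\infty$, this upper bound already gives $\Pb(\hat N(t)\ne\emptyset)\to0$, hence $\Pb(\Upsilon=\infty)=0$ (the events decrease), and since $Z$ vanishes after $\Upsilon$ we get $Z(\infty)=0$ a.s. The remaining step, and the main obstacle, is to combine the two displayed bounds into $\log\Pb(\hat N(t)\ne\emptyset)/\inf_{s\le t}\Lambda(s)\to1$: this requires $\sup_{s\le t}E(s)+\log t$ and $E(\tau_t)$ to be $o(|\inf_{s\le t}\Lambda(s)|)$, whereas condition (III) only gives that these are $o(t)$. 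At times $t$ where $\inf_{s\le t}\Lambda(s)$ is much smaller in modulus than $t$ one must exploit that $S\ne0$ forces the running infimum to decrease at a linear rate along an unbounded sequence of times — so $|\inf_{s\le t}\Lambda(s)|$ cannot lag arbitrarily far behind — and track carefully how the error accumulates between such times. This is precisely the ingredient that disappears at $S=0$, and establishing the uniform boundary estimate used above is the other point demanding care.
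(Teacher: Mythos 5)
Your $S>0$ argument (the $\Qb$-supermartingale $1/Z$, conditional Fatou, the spine decomposition bounded via Lemma \ref{unscaled_int_lem}, then Lemma \ref{stdmeas}) and your Jensen/spine-decomposition lower bound on $\log \Pb(\hat N(t)\neq\emptyset)$ are essentially the paper's own steps and are fine. The genuine gap is in your upper bound on $\Pb(\hat N(t)\neq\emptyset)$. The cosine-weighted many-to-one computation, $\Pb[\sum_{u\in\hat N(s)}\cos_u(s)]\leq e^{\Lambda(s)+E(s)}$, is correct, but the ``short uniform boundary estimate'' you invoke to upgrade the sub-tube count $|\hat N^\delta(s)|$ to all of $|\hat N(s)|$ is unproved, and it is exactly the hard point. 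The quantity to be controlled is a $\Pb$- (equivalently $\Pt$-) expectation, whereas the repulsive $\tan$ drift you appeal to describes the spine under $\Qt$; passing between the two \emph{is} the martingale change of measure, so the assertion that only an $O(\delta)$ fraction of the $\Pt$-mass of tube-surviving paths lies within $\delta L(s)$ of the boundary, uniformly in the time-inhomogeneous geometry of $(f,L)$, is a nontrivial claim that the usual conditions do not hand you. The paper avoids this issue entirely by widening the tube: it works with $Z^{f,\gamma L}$ for $\gamma>1$, so that \emph{every} particle of $\hat N^{f,L}(s)$ has its cosine factor (relative to the $\gamma L$-tube) bounded below by $\cos(\pi/2\gamma)$, and no boundary-layer control is needed; the price is the extra error $(1-\gamma^{-2})\int_0^s\frac{\pi^2}{8L(u)^2}du$, which the paper neutralises with a separate, fiddly lemma showing that $\int_0^t\frac{\pi^2}{8L(s)^2}ds$ grows at most linearly, before letting $\gamma\downarrow1$. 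Any honest repair of your first-moment bound is likely to be pushed to exactly this device.

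The second gap is that you stop short of the conclusion: you leave open how to combine the two bounds into the stated convergence of the ratio, and the mechanism you sketch for closing it --- that $S<0$ forces the running infimum of $\Lambda$ to keep pace so that $\sup_{s\leq t}E(s)+\log t$ is negligible against $|\inf_{s\leq t}\Lambda(s)|$ --- is not justified by the usual conditions. Since $S$ is only a liminf, $\Lambda$ may dip at linear rate along a sparse sequence $t_n$ and then climb for a very long stretch, during which $|\inf_{s\leq t}\Lambda(s)|$ stays of order $t_n$, which can be far smaller than $t$ (even logarithmic in $t$); so ``the infimum cannot lag arbitrarily far behind'' is not a consequence of (III) and (IV), and your final step does not close as written. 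You are right that this assembly is the delicate point (the paper itself dispatches it briskly, after organising the errors differently via the $\gamma$-trick and the linear-growth lemma for $\int L^{-2}$), but flagging it is not proving it: as it stands your proposal establishes the $S>0$ statement, extinction and the lower bound on $\log\Pb(\hat N(t)\neq\emptyset)$ when $S<0$, while the matching upper bound and the limit of the ratio both still need real work.
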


\begin{proof}
We first recall the spine decomposition and apply inequality (\ref{gineq}):
\begin{align*}
\Qt[Z(t)|\Gg_\infty] &= \int_0^t 2r e^{-rs}\zeta(s) ds + e^{-rt}\zeta(t)\\
&\leq \int_0^t 2r e^{-\int_0^s(r-\frac{\pi^2}{8L(u)^2} - \frac{1}{2}f'(u)^2 + \frac{L'(u)}{2L(u)})du + E(s)} ds\\
&\hspace{25mm} + e^{-\int_0^t(r-\frac{\pi^2}{8L(u)^2} - \frac{1}{2}f'(u)^2 + \frac{L'(u)}{2L(u)})du + E(t)}.
\end{align*}
If $S>0$, then the integrand above is exponentially small for all large $t$ (as is the second term); so $\liminf_{t\to\infty}\Qt[Z(t)|\Gg_\infty]<\infty$. It is easy to show that $1/Z$ is a positive $(\Qt,\Fg_t)$-supermartingale, and hence $Z(t)$ converges $\Qt$-almost surely to some (possibly infinite) limit. Thus, applying Fatou's lemma, we get
\[\Qt[Z(\infty)|\Gg_\infty]\leq \liminf_{t\to\infty} \Qt[Z(t)|\Gg_\infty] <\infty.\]
We deduce that $Z(\infty)<\infty$ $\Qt$-almost surely, and Lemma \ref{stdmeas} then gives that $\Pb[Z(\infty)]=1$.

Alternatively, suppose that $S<0$. Then by the above,
\[\Qt[Z(t)|\Gg_\infty] \leq (2rt+1)e^{-\inf_{s\leq t}\left\{\int_0^s(r-\frac{\pi^2}{8L(u)^2} - \frac{1}{2}f'(u)^2 + \frac{L'(u)}{2L(u)})du - E(s)\right\}}.\]
Now, by the tower property of conditional expectation and Jensen's inequality,
\[\Pb(\hat N(t)\neq\emptyset) = \Pb(Z(t)>0) = \Qb\left[\frac{1}{Z(t)}\right] \geq \Qt\left[\frac{1}{\Qt[Z(t)|\Gg_\infty]}\right].\]
This clearly implies that, for large $t$ (using that $S<0$),
\begin{multline*}
\frac{\log\Pb(\hat N(t)\neq\emptyset)}{\inf_{s\leq t}\int_0^s (r - \frac{\pi^2}{8L(u)^2} - \frac{1}{2}f'(u)^2 + \frac{L'(u)}{2L(u)})du}\\
\leq \frac{\inf_{s\leq t}\left\{\int_0^s (r - \frac{\pi^2}{8L(u)^2} - \frac{1}{2}f'(u)^2 + \frac{L'(u)}{2L(u)})du - E(s) \right\} - \log(2rt+1)}{\inf_{s\leq t}\int_0^s (r - \frac{\pi^2}{8L(u)^2} - \frac{1}{2}f'(u)^2 + \frac{L'(u)}{2L(u)})du};
\end{multline*}
and it is easy to see that the right-hand side converges to one as $t\to\infty$. This gives us our upper bound.

For the lower bound (still in the case $S<0$), suppose for a moment that we may choose $\gamma>1$ such that
\[\liminf_{t\to\infty} \frac{1}{t}\int_0^t \left(r - \frac{1}{2} f'(s)^2 - \frac{\pi^2}{8\gamma L(s)^2} + \frac{L'(s)}{2L(s)}\right) ds < 0.\]
We note that we may choose $\gamma$ in this way if $\int_0^t \pi^2/8L(s)^2 ds$ (eventually) shows at most linear growth, which we will check later.
Then
\begin{align*}
\Pb(\hat N(t)\neq\emptyset) \leq \inf_{s\leq t}\Pb(\hat N(s)\neq\emptyset) &= \inf_{s\leq t}\Pb\left[\frac{Z^{f,\gamma L}(s)}{Z^{f,\gamma L}(s)}\ind_{\{\hat N^{f,L}(s)\neq\emptyset\}}\right]\\
&= \inf_{s\leq t}\Qb^{f,\gamma L}\left[\frac{1}{Z^{f,\gamma L}(s)}\ind_{\{\hat N^{f,L}(s)\neq\emptyset\}}\right]\\
&\leq \inf_{s\leq t}\Qb^{f,\gamma L}\left[\frac{\ind_{\{\hat N^{f,L}(s)\neq\emptyset\}}}{\sum_{v\in\hat N^{f,L}(s)}e^{-rs}\zeta^{f,\gamma L}_v(s)}\right].
\end{align*}
If $\hat{N}^{f,L}(s)\neq\emptyset$ then there is at least one particle $v$ in $\hat{N}^{f,L}(s)$; we may then apply inequality (\ref{gineq}) to $\zeta_v^{f,\gamma L}(s)$ see that
\[\Pb(\hat N(t)\neq\emptyset) \leq \inf_{s\leq t}\frac{1}{e^{-\int_0^s (r - \frac{\pi^2}{8\gamma^2 L(u)^2} - \frac{1}{2}f'(u)^2 + \frac{L'(u)}{2L(u)})du - \gamma^2 E(s)}\cos\left(\pi/2\gamma\right)}.\]
We repeat our calculations from the upper bound, taking logarithms and dividing by the desired denominator, to give
\begin{align}
&\frac{\log\Pb(\hat N(t)\neq\emptyset)}{\inf_{s\leq t}\int_0^s (r - \frac{\pi^2}{8L(u)^2} - \frac{1}{2}f'(u)^2 + \frac{L'(u)}{2L(u)})du}\notag\\
&\geq \frac{\inf_{s\leq t}\left\{\int_0^s (r - \frac{\pi^2}{8\gamma^2 L(u)^2} - \frac{1}{2}f'(u)^2 + \frac{L'(u)}{2L(u)})du - \gamma^2 E(s) \right\} - \log\cos\left(\pi/2\gamma\right)}{\inf_{s\leq t}\int_0^s (r - \frac{\pi^2}{8L(u)^2} - \frac{1}{2}f'(u)^2 + \frac{L'(u)}{2L(u)})du}\notag\\
&\geq 1 + \frac{\left(1-\frac{1}{\gamma^2}\right)\sup_{s\leq t}\int_0^s \frac{\pi^2}{8L(u)^2}du + \gamma^2\sup_{s\leq t} E(s) - \log\cos\left(\pi/2\gamma\right)}{\inf_{s\leq t}\int_0^s (r - \frac{\pi^2}{8L(u)^2} - \frac{1}{2}f'(u)^2 + \frac{L'(u)}{2L(u)})du}\label{bottomline}
\end{align}
for large $t$. Thus it remains to check that the right-hand side above has a limsup that is close to 1 when $\gamma$ is close to 1. Again it is sufficient that $\int_0^t \pi^2/8L(s)^2 ds$ can (eventually) show at most linear growth, and we check that fact now. This is rather fiddly and not interesting in the context of the rest of the proof. Suppose it is not true; that is, suppose
\[\limsup_{t\to\infty} \frac{1}{t}\int_0^t \frac{\pi^2}{8L(s)^2} ds = \infty.\]
Then since $S>-\infty$ we must have
\begin{equation}\label{templabel}
\limsup_{t\to\infty} \frac{1}{t}\int_0^t \left(\frac{\pi^2}{8L(s)^2} - \frac{L'(s)}{2L(s)}\right) ds < \infty.
\end{equation}
If we take $T_n:=\inf\{t>0 : \int_0^t \pi^2/8L(s)^2 ds > nt\}$, then
\[\frac{d}{dt}\left.\left(\frac{1}{t}\int_0^t \frac{\pi^2}{8L(s)^2}ds\right)\right|_{T_n} > 0,\]
so differentiating and rearranging we get
\[L(T_n)^2 < \frac{\pi^2 T_n}{8\int_0^{T_n} \frac{\pi^2}{8L(s)^2} ds} < \frac{\pi^2 T_n}{8n}.\]
Now, we note that $\int_0^t \frac{L'(s)}{L(s)} ds = \log L(t) - \log L(0)$, so (\ref{templabel}) implies that for all large $t$,
\[\int_0^t \frac{\pi^2}{8L(s)^2}ds < Kt + \frac{1}{2}\log L(t)\]
for some constant $K$. We have just shown that $L(T_n)^2 < \pi^2 T_n/8n$, so for all large $n$,
\[\int_0^{T_n} \frac{\pi^2}{8L(s)^2}ds < KT_n + \frac{1}{4}\log T_n + \frac{1}{4}\log\frac{\pi^2}{8n}\]
contradicting (for large $n$) the definition of $T_n$.

We have shown that
\[\limsup_{t\to\infty} \frac{1}{t}\int_0^t \frac{\pi^2}{8L(s)^2} ds < \infty;\]
which allows us to make the limsup of (\ref{bottomline}) as close to $1$ as we like by letting $\gamma\downarrow1$. This completes the lower bound, which in particular implies (by monotonicity) that the probability of eventual extinction is equal to 1.
\end{proof}

\subsection{Almost sure growth}

Having established, in Proposition \ref{uiprop}, the large deviations behaviour of our model, we now turn to the question of what happens when extinction does not occur. The two propositions in this section contain the meat of our results in this direction. Proposition \ref{liminf_prop} gives a lower bound on the number of particles in $\hat{N}(t)$ for large $t$, and Proposition \ref{limsup_prop} an upper bound. The former holds only on the event that $Z$ has a positive limit; as mentioned in the introduction, this set coincides (up to a null event) with the event that no particle manages to follow within $L$ of $f$, although we will not prove this fact until Section \ref{inv_spine}. The proofs of our two propositions are very simple, but we stress again that this is due to the careful choice of martingale.

\begin{prop}\label{liminf_prop}
Let $\Omega^{\star}$ be the set on which $Z$ has a strictly positive limit,
\[\Omega^{\star} := \left\{\liminf_{t\to\infty} Z(t)>0 \right\}.\]
If $S>0$ then $\Pb$-almost surely on $\Omega^{\star}$ we have
\[\liminf_{t\to\infty} \frac{\log|\hat N(t)|}{\int_0^t \left(r - \frac{1}{2} f'(s)^2 - \frac{\pi^2}{8L(s)^2} + \frac{L'(s)}{2L(s)}\right) ds} \hs\geq\hs 1.\]
\end{prop}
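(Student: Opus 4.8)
The plan is to obtain the lower bound from the fact that $Z(t)$ is bounded above in terms of $|\hat N(t)|$, uniformly over the tube, using inequality (\ref{gineq}). Recall that $Z(t) = \sum_{u\in N(t)} e^{-rt}\zeta_u(t)$, and that $\zeta_u(t)$ vanishes unless $u\in\hat N(t)$; moreover for $u\in\hat N(t)$ we have $\zeta_u(t) = G_u(t)\cos\bigl(\tfrac{\pi}{2L(t)}(X_u(t)-f(t))\bigr)$ with $0\le\cos(\cdot)\le1$. Applying the upper bound in (\ref{gineq}) to each $G_u(t)$, we get
\begin{equation*}
Z(t) \;\le\; |\hat N(t)|\cdot e^{-rt}\cdot \exp\!\left(\tfrac12\int_0^t f'(s)^2\,ds + \int_0^t \tfrac{\pi^2}{8L(s)^2}\,ds - \int_0^t \tfrac{L'(s)}{2L(s)}\,ds + E(t)\right).
\end{equation*}
Rearranging, $\log|\hat N(t)| \ge \log Z(t) + \int_0^t\bigl(r - \tfrac12 f'(s)^2 - \tfrac{\pi^2}{8L(s)^2} + \tfrac{L'(s)}{2L(s)}\bigr)ds - E(t)$.

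Now I divide through by the denominator $D(t) := \int_0^t\bigl(r - \tfrac12 f'(s)^2 - \tfrac{\pi^2}{8L(s)^2} + \tfrac{L'(s)}{2L(s)}\bigr)ds$. On $\Omega^{\star}$ we have $\liminf_{t\to\infty} Z(t) > 0$, so $\log Z(t)$ is bounded below (for all large $t$, on this event) by a finite constant, hence $\log Z(t)/D(t)\to 0$: here I use that $S>0$ forces $D(t)\to+\infty$ linearly, indeed $D(t)/t\to S'$ along subsequences with $\liminf D(t)/t = S > 0$, so in particular $D(t)\to\infty$. By condition (III) of the usual conditions, $E(t)/t\to 0$, and since $D(t)\ge (S/2)t$ for all large $t$ (as $\liminf D(t)/t = S>0$), we get $E(t)/D(t)\to 0$ as well. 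Therefore
\begin{equation*}
\liminf_{t\to\infty}\frac{\log|\hat N(t)|}{D(t)} \;\ge\; \liminf_{t\to\infty}\left(\frac{\log Z(t)}{D(t)} + 1 - \frac{E(t)}{D(t)}\right) \;=\; 1,
\end{equation*}
which is the claim.

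The only genuinely delicate point is confirming that $D(t)\to+\infty$ (so that the ratios $\log Z(t)/D(t)$ and $E(t)/D(t)$ really vanish rather than oscillate through small values). This follows because $S = \liminf_{t\to\infty} D(t)/t > 0$ by hypothesis, so $D(t) \ge \tfrac{S}{2}\,t$ for all sufficiently large $t$; there is no subtlety beyond quoting the definition of $S$ and condition (III). I expect no real obstacle here — this is exactly the ``very simple'' argument promised in the text, the payoff of the martingale choice being that $Z(t)$ is essentially $|\hat N(t)|$ times a deterministic factor whose logarithm is precisely $D(t)$ up to the error $E(t)$. (The matching upper bound, Proposition \ref{limsup_prop}, and the identification of $\Omega^\star$ with non-extinction are the harder parts, but those are separate statements.)
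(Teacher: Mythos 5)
Your proposal is correct and follows essentially the same route as the paper's own proof: apply the upper bound in (\ref{gineq}) to each $\zeta_u(t)$, deduce $Z(t)\leq|\hat N(t)|e^{-D(t)+E(t)}$, divide by $D(t)$, and use that $\liminf D(t)/t=S>0$ together with $E(t)=o(t)$ and the positivity of $\liminf Z(t)$ on $\Omega^\star$. (The only cosmetic remark: you only need $\liminf \log Z(t)/D(t)\geq 0$, which follows from $\log Z(t)$ being bounded below on $\Omega^\star$, rather than the full convergence $\log Z(t)/D(t)\to0$; this does not affect the argument.)
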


\begin{proof}
For any $t\geq 0$, by inequality (\ref{gineq}), almost surely under $\Pb$
\[Z(t) = \sum_{u\in \hat N(t)} e^{-rt}\zeta_u(t) \leq |\hat N(t)| e^{-\int_0^t (r - \frac{\pi^2}{8 L(s)^2} - \frac{1}{2}f'(s)^2 + \frac{L'(s)}{2L(s)})ds + E(t)}.\]
Hence (for large $t$, since $S>0$)
\begin{multline*}
\frac{\log|\hat N(t)|}{\int_0^t \left(r - \frac{1}{2} f'(s)^2 - \frac{\pi^2}{8L(s)^2} + \frac{L'(s)}{2L(s)}\right) ds}\\
\geq \frac{\log Z(t) + \int_0^t \left(r - \frac{1}{2} f'(s)^2 - \frac{\pi^2}{8L(s)^2} + \frac{L'(s)}{2L(s)}\right)ds - E(t)}{\int_0^t \left(r - \frac{1}{2} f'(s)^2 - \frac{\pi^2}{8L(s)^2} + \frac{L'(s)}{2L(s)}\right) ds}.
\end{multline*}
Now, on $\Omega^{\star}$ we have $\liminf_{t\to\infty} Z(t) >0$ and thus $\frac{1}{\delta t}\log Z(t)$ has a non-negative liminf for any $\delta>0$; then since $S>0$ we see that the right-hand side above has liminf at least 1.
\end{proof}

\begin{rmk}
Recall that under $\Pb$, $Z$ is a non-negative martingale, and hence \mbox{$\liminf_{t\to\infty}Z(t) = Z(\infty)$} $\Pb$-almost surely. If $S > 0$, then by Proposition \ref{uiprop} $\Pb[Z(\infty)]=1$, so in this case $\Omega^{\star}$ occurs with strictly positive probability.
\end{rmk}

\begin{prop}\label{limsup_prop}
If $S>0$, then $\Pb$-almost surely we have
\[\limsup_{t\to\infty} \frac{\log|\hat N(t)|}{\int_0^t \left(r - \frac{1}{2} f'(s)^2 - \frac{\pi^2}{8L(s)^2} + \frac{L'(s)}{2L(s)}\right) ds} \hs\leq\hs 1.\]
\end{prop}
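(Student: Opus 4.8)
The plan is to follow the same route as Proposition \ref{liminf_prop}, but in the opposite direction: instead of bounding $Z(t)$ above by $|\hat N(t)|$ times an exponential, I want to bound each summand $e^{-rt}\zeta_u(t)$ of $Z(t)$ \emph{below}, so as to bound $|\hat N(t)|$ above by $Z(t)$ times an exponential. The obstruction is that the cosine factor in $\zeta_u^{f,L}(t)$ degenerates at the boundary of the tube, so it cannot be bounded away from $0$ for $u\in\hat N^{f,L}(t)$. The fix is to work with the martingale $Z^{f,\gamma L}$ attached to a slightly \emph{wider} tube of radius $\gamma L$, for $\gamma>1$, on which the cosine stays uniformly positive for particles lying in the narrow tube, and then to let $\gamma\downarrow 1$.

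Fix $\gamma>1$ and write $I(t):=\int_0^t\bigl(r-\tfrac12 f'(s)^2-\tfrac{\pi^2}{8L(s)^2}+\tfrac{L'(s)}{2L(s)}\bigr)ds$ for the denominator in the statement. First one checks that $(f,\gamma L)$ again satisfies the usual conditions: $E^{f,\gamma L}(t)\le\gamma^2 E(t)$ gives condition (III), while replacing $L$ by $\gamma L$ merely multiplies $\pi^2/8L^2$ by $\gamma^{-2}$ and leaves $L'/2L$ unchanged, giving condition (IV). Hence $Z^{f,\gamma L}$ is a non-negative $\Fg_t$-martingale with $Z^{f,\gamma L}(0)=1$; being a non-negative right-continuous martingale it converges $\Pb$-almost surely, so $\sup_{t\ge0}Z^{f,\gamma L}(t)<\infty$ $\Pb$-a.s. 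Now if $u\in\hat N^{f,L}(t)$ then $u\in\hat N^{f,\gamma L}(t)$, and $|X_u(t)-f(t)|<L(t)$ forces $\bigl|\tfrac{\pi}{2\gamma L(t)}(X_u(t)-f(t))\bigr|<\tfrac{\pi}{2\gamma}$, so $\cos\bigl(\tfrac{\pi}{2\gamma L(t)}(X_u(t)-f(t))\bigr)\ge\cos(\pi/2\gamma)>0$; combining this with the lower bound of inequality (\ref{gineq}) applied with radius $\gamma L$ (where the $\frac{L'}{2L}$-term is unchanged and the error is at most $\gamma^2 E(t)$) shows that on $\{u\in\hat N^{f,L}(t)\}$,
\[e^{-rt}\zeta^{f,\gamma L}_u(t)\ \ge\ \cos\!\bigl(\tfrac{\pi}{2\gamma}\bigr)\,\exp\!\Bigl(-rt+\tfrac12\!\int_0^t f'(s)^2\,ds+\tfrac1{\gamma^2}\!\int_0^t\tfrac{\pi^2}{8L(s)^2}\,ds-\!\int_0^t\tfrac{L'(s)}{2L(s)}\,ds-\gamma^2 E(t)\Bigr).\]
Since every term of $Z^{f,\gamma L}(t)=\sum_{u\in\hat N^{f,\gamma L}(t)}e^{-rt}\zeta^{f,\gamma L}_u(t)$ is non-negative and $\hat N^{f,L}(t)\subseteq\hat N^{f,\gamma L}(t)$, summing only over $u\in\hat N^{f,L}(t)$ and rearranging gives, $\Pb$-a.s. for all $t\ge0$,
\[|\hat N(t)|\ \le\ \frac{Z^{f,\gamma L}(t)}{\cos(\pi/2\gamma)}\,\exp\!\Bigl(I(t)+\bigl(1-\tfrac1{\gamma^2}\bigr)\!\int_0^t\tfrac{\pi^2}{8L(s)^2}\,ds+\gamma^2 E(t)\Bigr).\]

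To conclude, take logarithms and divide by $I(t)$. Since $S>0$ we have $\liminf_{t\to\infty}I(t)/t=S$, so $I(t)\to\infty$; thus the constant term $\log\bigl(\sup_s Z^{f,\gamma L}(s)/\cos(\pi/2\gamma)\bigr)$, which is a.s.\ finite, contributes $0$ in the limit, and $\gamma^2 E(t)/I(t)\to0$ by condition (III). The argument in the proof of Proposition \ref{uiprop} shows, using only $S>-\infty$, that $c:=\limsup_{t\to\infty}\tfrac1t\int_0^t\tfrac{\pi^2}{8L(s)^2}ds<\infty$, whence $\limsup_{t\to\infty}\bigl(1-\gamma^{-2}\bigr)\int_0^t\tfrac{\pi^2}{8L(s)^2}ds/I(t)\le (1-\gamma^{-2})c/S$. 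Therefore, $\Pb$-almost surely,
\[\limsup_{t\to\infty}\frac{\log|\hat N(t)|}{I(t)}\ \le\ 1+\bigl(1-\tfrac1{\gamma^2}\bigr)\frac{c}{S}.\]
Applying this for a sequence $\gamma=\gamma_k\downarrow1$ and intersecting the countably many almost-sure events yields the claim.

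The single real obstacle is the one the fattening device is designed to overcome: a lower bound on each $\zeta_u(t)$ is needed, but the eigenfunction $\cos\bigl(\tfrac{\pi}{2L(t)}(X_u(t)-f(t))\bigr)$ cannot be bounded away from $0$ over $\hat N^{f,L}(t)$ since such a particle may sit arbitrarily near the edge of the tube. Passing to the tube of radius $\gamma L$ restores a uniform lower bound on the cosine, at the harmless price of the prefactor $(1-\gamma^{-2})$ in front of $\int_0^t\pi^2/8L^2$, which vanishes as $\gamma\downarrow1$. Everything else — that $(f,\gamma L)$ is admissible, that $Z^{f,\gamma L}$ is a non-negative martingale (so $\sup_t Z^{f,\gamma L}(t)<\infty$ a.s.), and the at-most-linear growth of $\int_0^t\pi^2/8L^2$ — is either immediate or already in hand from Proposition \ref{uiprop}.
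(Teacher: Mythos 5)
Your proof is correct and follows essentially the same route as the paper's: fatten the tube to radius $\gamma L$ so the cosine factor is bounded below by $\cos(\pi/2\gamma)$ on $\hat N^{f,L}(t)$, apply inequality (\ref{gineq}) to bound $Z^{f,\gamma L}(t)$ below by $|\hat N^{f,L}(t)|$ times an explicit exponential, use a.s.\ finiteness of the non-negative martingale limit together with the at-most-linear growth of $\int_0^t \pi^2/8L(s)^2\,ds$ from Proposition \ref{uiprop}, and let $\gamma\downarrow 1$. Your write-up is if anything slightly more explicit than the paper's (checking admissibility of $(f,\gamma L)$ and taking $\gamma\downarrow1$ along a countable sequence), but the argument is the same.
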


\begin{proof}
Fix $\gamma>1$ and let $\varepsilon=\cos(\pi/2\gamma)$. Since $Z^{f,\gamma L}$ is a non-negative martingale under $\Pb$, we have $Z^{f,\gamma L}(\infty) < \infty$ $\Pb$-almost surely. This implies that for any $\delta>0$, almost surely
\[\limsup_{t\to\infty}\frac{1}{\delta t}\log Z^{f,\gamma L}(t) \leq 0.\]
Now, almost surely under $\Pb$,
\[Z^{f,\gamma L}(t) \hs = \sum_{u\in \hat N^{f,\gamma L}(t)} e^{-rt} \zeta^{f,\gamma L}_u(t) \hs \geq \sum_{u\in \hat N^{f,L}(t)} e^{-rt} \zeta^{f,\gamma L}_u(t).\]
By the definition of $\varepsilon$ above, for any $u\in \hat N^{f,L}(t)$ the cosine term in $\zeta^{f,\gamma L}_u(t)$ is at least $\varepsilon$ (since the particle is within $L$ of $f(t)$ at time $t$). Applying inequality (\ref{gineq}) we see that
\[Z^{f,\gamma L}(t) \geq |\hat N^{f,L}| \cdot \varepsilon \cdot e^{-\int_0^t (r - \frac{\pi^2}{8\gamma^2 L(s)^2} - \frac{1}{2}f'(s)^2 + \frac{L'(s)}{2L(s)})ds - \gamma^2 E(t)}\]
and hence
\begin{multline*}
\frac{\log|\hat N(t)|}{\int_0^t \left(r - \frac{1}{2} f'(s)^2 - \frac{\pi^2}{8L(s)^2} + \frac{L'(s)}{2L(s)}\right) ds}\\
\leq \frac{\log Z(t) - \log\varepsilon + \int_0^t \left(r - \frac{1}{2} f'(s)^2 - \frac{\pi^2}{8\gamma^2 L(s)^2} + \frac{L'(s)}{2L(s)}\right)ds + \gamma^2 E(t)}{\int_0^t \left(r - \frac{1}{2} f'(s)^2 - \frac{\pi^2}{8L(s)^2} + \frac{L'(s)}{2L(s)}\right) ds}.
\end{multline*}
As in Proposition \ref{uiprop}, we can bound the growth of the $\int_0^t \frac{\pi^2}{8\gamma^2 L(s)^2} ds$ term in the numerator so that letting $\gamma\downarrow 1$ we get the desired result.
\end{proof}

\begin{cor}\label{lim_cor}
If $S > 0$, then $\Pb$-almost surely on the event $\Omega^{\star}$,
\[\frac{\log |\hat N(t)|}{\int_0^t (r - \frac{\pi^2 t}{8L^2} - \frac{1}{2}\int_0^t f'(s)^2 + \frac{L'(s)}{2L(s)}) ds} \hs\longrightarrow\hs 1.\]
\end{cor}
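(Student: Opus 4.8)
The plan is to read this corollary off directly from Propositions \ref{liminf_prop} and \ref{limsup_prop}; the only real content is checking that the ratio in the statement is well-defined for large $t$, after which a squeeze argument finishes the job, so I expect no genuine obstacle here.

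First I would record two preliminary observations. Writing
\[D(t):=\int_0^t \left(r - \frac{1}{2} f'(s)^2 - \frac{\pi^2}{8L(s)^2} + \frac{L'(s)}{2L(s)}\right) ds\]
for the denominator, the hypothesis $S>0$ means that $\liminf_{t\to\infty} D(t)/t = S >0$, so $D(t)\to\infty$ and in particular $D(t)>0$ for all large $t$. Secondly, on $\Omega^{\star}$ we have $\liminf_{t\to\infty}Z(t)>0$, hence $Z(t)>0$ --- and therefore $\hat N(t)\neq\emptyset$ --- for all large $t$, so that $\log|\hat N(t)|\geq 0$ is well-defined. Consequently the ratio $\log|\hat N(t)|/D(t)$ makes sense for all sufficiently large $t$ on $\Omega^{\star}$.

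Then I would simply intersect the relevant events. Proposition \ref{limsup_prop} gives that, $\Pb$-almost surely,
\[\limsup_{t\to\infty}\frac{\log|\hat N(t)|}{D(t)}\leq 1,\]
and Proposition \ref{liminf_prop} gives that, $\Pb$-almost surely on $\Omega^{\star}$,
\[\liminf_{t\to\infty}\frac{\log|\hat N(t)|}{D(t)}\geq 1.\]
On the intersection of these two events --- which coincides with $\Omega^{\star}$ up to a $\Pb$-null set --- both bounds hold simultaneously, so $\log|\hat N(t)|/D(t)\to 1$, which is precisely the claimed statement. The entire difficulty of the corollary was already absorbed into the proofs of the two propositions (in particular the control of the linear growth of $\int_0^t \pi^2/8L(s)^2\,ds$ needed to let $\gamma\downarrow 1$), so nothing further is required.
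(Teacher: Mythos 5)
Your proposal is correct and is exactly the paper's argument: the corollary is obtained by combining Propositions \ref{liminf_prop} and \ref{limsup_prop}, with the extra well-definedness remarks being harmless elaboration. Nothing further is needed.
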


\begin{proof}
Simply combine Propositions \ref{liminf_prop} and \ref{limsup_prop}.
\end{proof}

\section{Showing that $Z(\infty)=0$ agrees with extinction}\label{inv_spine}
We note that we have now established our main result except for one key point: our growth results have so far been on the event $\{Z(\infty)>0\}$, rather than the event of survival of the process, $\{\Upsilon = \infty\}$. We turn now to showing that these two events differ only on a set of zero probability.

The approach to proving this is often analytic: one shows that \mbox{$\Pb(Z(\infty)>0)$} and \mbox{$\Pb(\Upsilon=\infty)$} satisfy the same differential equation with the same boundary conditions, and then shows that any such solution to the equation is unique. There is also sometimes a probabilistic approach to such arguments: one considers the product martingale
\[P(t):=\Pb(Z(\infty)=0 | \Fg_t) = \prod_{u\in N(t)} \Pb_{X_u(t)}(Z(\infty)=0).\]
On extinction, the limit of this process is clearly 1, and if we could show that on survival the limit is 0, then since $P$ is a bounded non-negative martingale we would have
\[\Pb(\Upsilon<\infty) = \Pb[P(\infty)] = \Pb[P(0)] = \Pb(Z(\infty)=0).\]
In Harris et al \cite{harris_et_al:fkpp_one_sided_travelling_waves}, for example, we have killing of particles at the origin rather than on the boundary of a tube -- and it is shown that on survival, at least one particle escapes to infinity and its term in the product martingale tends to zero. This is enough to complete the argument (although in \cite{harris_et_al:fkpp_one_sided_travelling_waves} the authors favour the analytic approach). In our case we are hampered by the fact that for a single particle $u$ the value of $\Pb_{X_u(t)}(Z_u(\infty)=0)$ is bounded away from zero, and if the particle is close to the edge of the tube, or even possibly in some places in the interior the tube, then this probability takes values arbitrarily close to 1.

The time-inhomogeneity of our problem means that other standard methods also fail. Our alternative approach is based upon similar principles as the probabilistic approach above, but is more direct: we show that if at least one particle survives for a long time, then it will have many births in ``good'' areas of the tube, and thus $Z(\infty)>0$ with high probability.

Recall that under $\Pt_x$, we start at time $t=0$ with one particle at position $x$ (rather than at the origin) -- and similarly for $\Qt_x$. We assume throughout this section that $S>0$, otherwise there is nothing to prove --- our theorem does not consider the case $S=0$, and if $S<0$ we have proved that $\Pb(\Upsilon=\infty)=0=\Pb(Z(\infty)=0)$. We now need some more notation.

\begin{defn}
Let $L_0 := \frac{\pi}{2\sqrt S}\vee 1$, and define
\[\begin{array}{lccl} \tilde L: & [0,\infty) &\to     &(0,\infty)\\
						   & t     &\mapsto  &\left\{\begin{array}{ll}L(t) & \hbox{ if } L(t)\leq L_0 \\ L_0 + (L(t)-L_0)e^{-(L(t)-L_0)^2} & \hbox{ if } L(t)>L_0\end{array}\right.\end{array}\]
and
\[\begin{array}{lccl} \tilde f: & [0,\infty) &\to     &\Rb\\
						   & t     &\mapsto  &f(t) + L(t) - \tilde L(t).\end{array}\]
Now, for any function $g$ on $[0,\infty)$, define the $t$-delayed version $g_t$ of $g$ for $t\in[0,\infty)$ by
\[g_t(s) = g(t+s)-g(t), \hs s\geq0.\]
Thus for each $t\geq0$ we have four new functions $f_t$, $\tilde f_t$, $L_t$ and $\tilde L_t$.

Also, for $\alpha\in[0,1)$, define
\[U_\alpha = \{(t,x) : \Pb_{x-f(t)}(Z^{f_t, L_t}(\infty)>0) \geq \alpha\} \subseteq [0,\infty)\times\Rb.\]
We think of $U_\alpha$ as the ``good'' part of the tube --- if a particle is born in $U_\alpha$ then it has probability at least $\alpha$ of contributing to $Z(\infty)$. Finally, for any particle $u$ and $t\geq0$, define
\[I_\alpha(u;t) = \int_0^{t\wedge S_u} \ind_{\{X_u(s) \in U_\alpha\}} ds;\]
$I_\alpha(u;t)$ is the time spent by particle $u$ in the set $U_\alpha$ before $t$.
\end{defn}

\begin{figure}[h!]
  \centering
      \includegraphics[width=\textwidth]{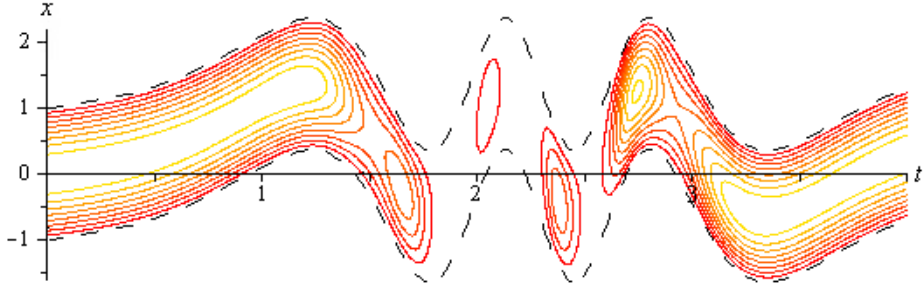}
  \caption{Approximation to a section of $U_\alpha$ for eight different values of $\alpha$ when \mbox{$f(t)=\sin(a \tanh(t+b))+c$} for some constants $a$, $b$ and $c$.}
\end{figure}

Our first task is to convert to using $\tilde f$ and $\tilde L$; the fact that $\tilde L$ is bounded will prove useful.

\begin{lem}\label{bdd_L}
The pair $(\tilde f,\tilde L)$ satisfies usual conditions (II, III, IV), and $\tilde S := S^{\tilde f,\tilde L}\geq S^{f,L}/2 >0$.
\end{lem}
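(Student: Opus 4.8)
The plan is to verify each of the stated properties in turn, exploiting the fact that $\tilde L$ modifies $L$ only where $L(t) > L_0$, and there only by a bounded amount. First I would observe that the map $x \mapsto L_0 + (x - L_0)e^{-(x-L_0)^2}$ is smooth, so condition (II) for $(\tilde f, \tilde L)$ follows immediately from condition (II) for $(f,L)$ provided the two pieces of $\tilde L$ match up to second order at the switching points $\{t : L(t) = L_0\}$; since both pieces equal $L_0$ with matching first and second derivatives there (the correction term $ (x-L_0)e^{-(x-L_0)^2}$ vanishes to second order at $x = L_0$), $\tilde L$ is genuinely twice continuously differentiable, and $\tilde f = f + L - \tilde L$ inherits this. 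I would also note $\tilde f(0)$ need not be zero, which is why only (II, III, IV) are claimed.

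Next, for condition (III) I would bound $\tilde E(t) := E^{\tilde f, \tilde L}(t)$ in terms of $E(t)$. The point is that $\tilde L \leq L_0 \vee L = O(L)$ (in fact $\tilde L$ is bounded wherever the correction is active), while $|\tilde L'|$, $|\tilde L''|$ are controlled by $|L'|, |L''|$ up to bounded multiplicative factors coming from the derivatives of the smooth cutoff $x \mapsto (x-L_0)e^{-(x-L_0)^2}$ and its derivatives, which are uniformly bounded on $[L_0, \infty)$. Similarly $\tilde f' = f' + L' - \tilde L'$ and $\tilde f'' = f'' + L'' - \tilde L''$ are controlled by $f', f'', L', L''$. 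Hence $\tilde E(t) \leq C\, E(t) + C\int_0^t(|L'(s)|\,L(s) + |L''(s)|\,L(s))\,ds + (\text{lower order})$, and since the pair $(f,L)$ satisfies (III), each of these pieces is $o(t)$; so $\tilde E(t)/t \to 0$.

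For the quantitative claim $\tilde S \geq S/2 > 0$, which I expect to be the main point of the lemma, I would compare the integrands pointwise. Write $\phi(t) = r - \tfrac12 f'(t)^2 - \tfrac{\pi^2}{8L(t)^2} + \tfrac{L'(t)}{2L(t)}$ and $\tilde\phi(t) = r - \tfrac12 \tilde f'(t)^2 - \tfrac{\pi^2}{8\tilde L(t)^2} + \tfrac{\tilde L'(t)}{2\tilde L(t)}$. Where $L(t) \leq L_0$ we have $\tilde L = L$, $\tilde f = f$, so $\tilde\phi = \phi$. Where $L(t) > L_0$: since $\tilde L \leq L$ we have $-\tfrac{\pi^2}{8\tilde L^2} \leq -\tfrac{\pi^2}{8L^2}$, a loss; but $\tilde L$ is \emph{bounded below} there only by $L_0$, and by the definition $L_0 = \tfrac{\pi}{2\sqrt S} \vee 1$ we get $\tfrac{\pi^2}{8\tilde L^2} \leq \tfrac{\pi^2}{8 L_0^2} \leq \tfrac{S}{2}$ whenever $L_0 = \tfrac{\pi}{2\sqrt S}$ dominates (and the $\vee 1$ only helps). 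The remaining discrepancies $\tfrac12(\tilde f'^2 - f'^2)$ and $\tfrac{\tilde L'}{2\tilde L} - \tfrac{L'}{2L}$ are, after integration from $0$ to $t$, of size $o(t)$: indeed the $L'$-type terms integrate to differences of logarithms $\tfrac12(\log \tilde L(t) - \log L(t)) - \tfrac12(\log\tilde L(0) - \log L(0))$ which are $O(1)$ since $\tilde L/L \in (0,1]$ is bounded, and the $f'$-type terms are absorbed into the $E$-type bounds of condition (III) (this is exactly the role of (III)). Putting these together, $\tfrac1t\int_0^t \tilde\phi(s)\,ds \geq \tfrac1t\int_0^t \phi(s)\,ds - \tfrac{S}{2} + o(1)$, and taking $\liminf$ gives $\tilde S \geq S - \tfrac S2 = \tfrac S2 > 0$. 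The hardest part is bookkeeping the $o(t)$ error terms carefully enough to see they are genuinely negligible — in particular making sure the $\tilde f'^2$ term does not cost more than the $E$-bound allows — but no new idea beyond conditions (III) and the boundedness of the cutoff and its derivatives is needed.
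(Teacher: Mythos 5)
Your overall strategy matches the paper's (compare the integrands termwise, use $\tilde L=L$, $\tilde f=f$ where $L\le L_0$, and on $\{L>L_0\}$ pay at most $\pi^2/(8L_0^2)\le S/2$ by the choice $L_0\ge\pi/(2\sqrt S)$), and your $C^2$-matching check at $L=L_0$ is fine. But two steps, as written, have genuine gaps. First, it is not true that $|\tilde L''|$ is controlled by $|L''|$ up to a bounded factor: writing $\tilde L=\psi(L)$ with the smooth cutoff $\psi$, the chain rule gives $\tilde L''=\psi''(L)\,L'^2+\psi'(L)\,L''$, so an extra term of order $L'(t)^2$ appears, and the same term shows up when you compare $\tilde f'^2$ with $f'^2$ (since $\tilde f'-f'=L'-\tilde L'$). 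To dispose of these you need $\int_0^t L'(s)^2\,ds=o(t)$, which does not follow from "boundedness of the cutoff" alone; the paper gets it by integration by parts, $\int_0^t L'(s)^2\,ds=L'(t)L(t)-L'(0)L(0)-\int_0^t L''(s)L(s)\,ds=o(t)$ using condition (III), and then uses Minkowski's inequality to conclude $\int_0^t\tilde f'(s)^2\,ds=\int_0^t f'(s)^2\,ds+o(t)$. Your intermediate bound involving $\int_0^t|L'(s)|L(s)\,ds$ does not help here either: that integral need not be $o(t)$ under the usual conditions (e.g.\ $L(t)=(t+1)^{3/4}$), so as stated your verification of (III) is incomplete. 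Your closing remark that "no new idea beyond (III) and boundedness of the cutoff is needed" is precisely where the missing ingredient lies.

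Second, the claim that $\tfrac12\bigl(\log\tilde L(t)-\log L(t)\bigr)$ is $O(1)$ because $\tilde L/L\in(0,1]$ is not correct: boundedness above by $1$ only gives an upper bound on this difference, whereas for the lower bound on $\tilde S$ you need it bounded below (or at least $o(t)$), and when $L(t)\to\infty$ the ratio $\tilde L(t)/L(t)\approx L_0/L(t)$ tends to $0$. What is actually needed is $\limsup_{t\to\infty}\tfrac1t\log L(t)\le 0$, and this requires a separate argument; the paper proves it by contradiction via the mean value theorem, using that $L'(t)L(t)=o(t)$ (if $L(t_n)>e^{\varepsilon t_n}$ one finds $c_n$ with $L'(c_n)L(c_n)\ge e^{2\varepsilon t_n}/4t_n$). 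With these two estimates supplied — $\int_0^t L'(s)^2\,ds=o(t)$ and $\limsup\tfrac1t\log L(t)\le0$ — your comparison argument goes through essentially as in the paper; without them, the $o(t)$ bookkeeping you defer is exactly where the proof would fail.
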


\begin{proof}
We note that $\tilde L$ is twice continuously differentiable and hence so is $\tilde f$, and that $\tilde L(t) = L(t)$ whenever $L(t)\leq L_0$, $\tilde L(t) \geq L_0$ whenever $L(t)\geq L_0$, and $\tilde L(t) \leq L(t)\wedge(L_0+1)$ for all $t\geq0$. We first claim that $E^{\tilde f,\tilde L}(t) = o(t)$, working by comparison with $E^{f,L}$. Indeed, when $L(t)\leq L_0$ we clearly have $|\tilde L'(t)|=|L'(t)|$ and $|\tilde L''(t)|=|L''(t)|$. When $L(t)>L_0$,
\[\tilde L'(t) = L'(t)(1-2(L(t)-L_0)^2)e^{-(L(t)-L_0)^2}\]
so $|\tilde L'(t)|\leq |L'(t)|$. Also,
\begin{multline*}
\tilde L''(t) = L''(t)e^{-(L(t)-L_0)^2} - 6L'(t)^2(L(t)-L_0)e^{-(L(t)-L_0)^2}\\
 - 2L''(t)(L(t)-L_0)^2e^{-(L(t)-L_0)^2} + 4L'(t)^2 (L(t)-L_0)^3e^{-(L(t)-L_0)^2}
\end{multline*}
so (since the sizes of $xe^{-x^2}$, $x^2 e^{-x^2}$ and $x^3 e^{-x^2}$ are bounded above by 1)
\begin{multline*}
\int_0^t |\tilde L''(s)|\tilde L(s) ds \leq \int_0^t |L''(s)|L(s) ds + 6(L_0+1)\int_0^t L'(s)^2 ds\\
 + 2\int_0^t |L''(s)|L(s)ds + 4(L_0+1)\int_0^t L'(s)^2 ds.
 \end{multline*}
Each of these terms on the right-hand side above is $o(t)$ since
\[\int_0^t L'(s)^2 ds = L'(t)L(t) - L'(0)L(0) - \int_0^t L''(s)L(s) ds\]
and $L$ satisfies our usual conditions.
As $\tilde f'(t) = f'(t) + L'(t) - \tilde L'(t)$, and similarly for $\tilde f''$, we may also bound $|\tilde f'(t)|\tilde L(t)$ and $\int_0^t |\tilde f''(s)|\tilde L(s) ds$ simply by using the above estimates along with the triangle inequality and linearity of the integral. Thus, provided that \mbox{$E^{f,L}(t)=o(t)$} we must have $E^{\tilde f,\tilde L}(t)=o(t)$. Clearly also $S^{\tilde f, \tilde L} \in (-\infty,\infty)$.

Secondly, we claim that $\limsup_{t\to\infty} \frac{1}{t}\log L(t) \leq 0$. Suppose not; then there exist $\varepsilon>0$ and $t_n\to\infty$ such that $L(t_n) > e^{\varepsilon t_n}$ for each $n$. Setting
\[T_n:=\sup\{t\in[0,t_n) : L(t)< e^{\varepsilon t_n}/2 \},\]
if $T_n>0$ (which must occur for all but finitely many $n$) then by the mean value theorem we can choose $c_n\in(T_n,t_n)$ such that $L'(c_n) \geq e^{\varepsilon t_n}/2t_n$. But $L(c_n) \geq e^{\varepsilon t_n}/2$, so $L'(c_n)L(c_n) \geq e^{2\varepsilon t_n}/4t_n$, contradicting the assumption that $(f,L)$ satisfies the usual conditions (specifically the requirement that $L'(t)L(t)=o(t)$).

Thirdly, we show that $\int_0^t \tilde f'(s)^2 ds = \int_0^t f'(s)^2 ds + o(t)$. By Minkowski's inequality,
\begin{align*}
&\left(\int_0^t \tilde f'(s)^2 ds\right)^{1/2}\\
&= \left(\int_0^t (f'(s) + L'(s) - \tilde L'(s))^2 ds\right)^{1/2}\\
&\leq \left(\int_0^t f'(s)^2 ds\right)^{1/2} + \left(\int_0^t L'(s)^2 ds\right)^{1/2} + \left(\int_0^t \tilde L'(s)^2 ds\right)^{1/2}
\end{align*}
but
\[\int_0^t L'(s)^2 ds = L(t)L'(t) - L(0)L'(0) - \int_0^t L''(s) L(s) ds = o(t)\]
and the same calculation holds for $\tilde L$. Similarly by writing out $(\int_0^t f'(s)^2 ds)^{1/2}$ in terms of $\tilde f'$, $L'$ and $\tilde L$ and applying Minkowski's inequality we get that
\[\int_0^t f'(s)^2 ds \leq \int_0^t \tilde f'(s)^2 ds + o(t).\]

Our final claim is that $\tilde S := S^{\tilde f,\tilde L}\geq S^{f,L}/2 >0$. Indeed, using various facts just established,
\begin{align*}
&\frac{1}{t}\int_0^t \left(r - \frac{1}{2}\tilde f'(s)^2 - \frac{\pi^2}{8\tilde L(s)^2} + \frac{\tilde L'(s)}{\tilde L(s)}\right) ds\\
&\geq \frac{1}{t}\int_0^t \left(r - \frac{1}{2}f'(s)^2 - \frac{\pi^2}{8 L(s)^2}\right)ds - \frac{1}{t}\int_0^t \frac{\pi^2}{8L_0^2} ds + \frac{1}{t}\log \tilde L(t) - \frac{1}{t}\log \tilde L(0)\\
&\geq \frac{1}{t}\int_0^t \left(r - \frac{1}{2}f'(s)^2 - \frac{\pi^2}{8 L(s)^2}\right)ds - S/2 + \frac{1}{t}\log(L(t)\wedge 1) - \frac{1}{t}\log \tilde L(0)
\end{align*}
so that (since $\limsup \frac{1}{t}\log L(t) \leq 0$)
\begin{align*}
&\liminf_{t\to\infty}\frac{1}{t}\int_0^t \left(r - \frac{1}{2}\tilde f'(s)^2 - \frac{\pi^2}{8\tilde L(s)^2} + \frac{\tilde L'(s)}{\tilde L(s)}\right) ds\\
&\geq \liminf_{t\to\infty}\left\{\frac{1}{t}\int_0^t \left(r - \frac{1}{2}f'(s)^2 - \frac{\pi^2}{8\tilde L(s)^2}\right) ds + \frac{1}{t}\log L(t)\right\} - S/2\\
&\geq \liminf_{t\to\infty}\frac{1}{t}\int_0^t \left(r - \frac{1}{2}f'(s)^2 - \frac{\pi^2}{8 L(s)^2} + \frac{L'(s)}{L(s)}\right) ds - S/2\\
&= S^{f,L}/2
\end{align*}
as required.
\end{proof}

Our next lemma establishes that for sufficiently small $\alpha$, $U_\alpha$ --- which we think of as the good part of the tube --- stretches to near the top and bottom edges of the $L$-tube for almost $S/2r$ proportion of the time. To do this we use the identity given in Lemma \ref{extinction_lem} combined with the spine decomposition. For $\delta\in(0,1)$ and $t\geq0$, let
\[\hat L(t) := ((1-\delta) L(t)) \vee (L(t)-\delta).\]

\begin{lem}\label{tube_filling}
Fix $\delta \in(0,1)$ and $\beta<1$. If $S > 0$ then for sufficiently small $\alpha>0$ and large $T$, we have
\[\int_0^t \ind_{\{(s,x)\in U_\alpha \hsl \forall x \in [f(s)-\hat L(s), f(s)+\hat L(s)]\}} ds \geq \beta\frac{S}{2r}t \hs \forall t\geq T.\]
\end{lem}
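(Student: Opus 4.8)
The plan has two essentially independent parts: (i) isolate a \emph{deterministic} condition on a time $s$ which forces the whole fibre $\{s\}\times[f(s)-\hat L(s),f(s)+\hat L(s)]$ to lie inside $U_\alpha$ for a suitably small fixed $\alpha$; and (ii) show by an elementary real-variable argument that this condition holds on a set of times of density at least $S/(2r)$ (in fact at least $S/r$).

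For part (i), write $M(t):=\int_0^t\big(r-\tfrac12 f'(v)^2-\tfrac{\pi^2}{8L(v)^2}+\tfrac{L'(v)}{2L(v)}\big)dv$, so that $M(t)/t\to S$, and for $\eta\in(0,S)$ and $C>0$ set
\[G_{\eta,C}:=\big\{s\ge0:\ M(s+u)-M(s)\ge\eta u-C\ \text{ for all }u\ge0\big\}.\]
If $s\in G_{\eta,C}$ then one checks easily from $M(t)/t\to S$ that the delayed pair $(f_s,L_s)$ still has $S^{f_s,L_s}=S>0$, and by construction its Malthusian integral never falls below $-C$ and ultimately grows at least at rate $\eta$. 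I would estimate the survival probability of the delayed process started from a point $y$ of the fibre by combining Lemma \ref{extinction_lem} at time $\infty$ with the spine decomposition and conditional Jensen:
\[\Pb_y\big(Z^{f_s,L_s}(\infty)>0\big)=Z^{f_s,L_s}(0)\,\Qb_y\!\Big[\tfrac{1}{Z^{f_s,L_s}(\infty)}\Big]\ \ge\ Z^{f_s,L_s}(0)\,\Qt_y\!\Big[\Big(2r\!\int_0^\infty e^{-ru}\zeta^{f_s,L_s}(u)\,du\Big)^{-1}\Big],\]
and then bound the inner integral from above via inequality (\ref{gineq}) applied to the delayed tube, which gives $e^{-ru}\zeta^{f_s,L_s}(u)\le e^{-(M(s+u)-M(s))+E^{f_s,L_s}(u)}\le e^{-\eta u+C+E^{f_s,L_s}(u)}$, where $E^{f_s,L_s}(u)\le E(s+u)$. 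Granting the needed uniform control of this correction, the integral is at most a constant $K=K(\eta,C)$, so $\Pb_y(Z^{f_s,L_s}(\infty)>0)\ge Z^{f_s,L_s}(0)/K$; and since $Z^{f_s,L_s}(0)=\exp(\tfrac{L'(s)}{2L(s)}y^2)\cos(\tfrac{\pi y}{2L(s)})$ is bounded below on the fibre (using that membership of $G_{\eta,C}$ keeps $L$ away from $0$, and replacing, near the possibly-distant edge of a wide tube, the cosine martingale by one attached to a bounded-width sub-tube sitting strictly inside the $L_s$-tube, as furnished by the device of Lemma \ref{bdd_L}), one obtains $\Pb_y(Z^{f_s,L_s}(\infty)>0)\ge\alpha$ for every $y\in[-\hat L(s),\hat L(s)]$ with a fixed $\alpha=\alpha(\eta,C,\delta)>0$. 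Thus $G_{\eta,C}$ is contained in the required set of times.

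For part (ii), put $M_\eta(t):=M(t)-\eta t$ and $m_\eta(s):=\inf_{t'\ge s}M_\eta(t')$, so that $s\in G_{\eta,C}\Leftrightarrow M_\eta(s)-m_\eta(s)\le C$. As $M_\eta(t)/t\to S-\eta>0$, the running future-infimum $m_\eta$ is non-decreasing, continuous, unbounded, satisfies $m_\eta(t)/t\to S-\eta$, and its Lebesgue--Stieltjes measure $dm_\eta$ is carried by $\{M_\eta=m_\eta\}$, on which $0\le M_\eta'\le r-\eta+\tfrac{|L'|}{2L}$. Since moreover $\int_0^t|L'|/L=o(t)$ (from $\int_0^t L'^2=o(t)$ and $\int_0^t L^{-2}=O(t)$, both consequences of the usual conditions), it follows that
\[(S-\eta)t+o(t)=m_\eta(t)-m_\eta(0)\ \le\ (r-\eta)\,\big|\{s\le t:M_\eta(s)=m_\eta(s)\}\big|+o(t),\]
hence $\big|\{s\le t:M_\eta(s)=m_\eta(s)\}\big|\ge\frac{S-\eta}{r-\eta}t+o(t)$. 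Since $\{M_\eta=m_\eta\}\subseteq G_{\eta,C}$ lies in the good set, letting $\eta\downarrow0$ shows the good set has lower density at least $S/r$; in particular, given $\beta<1$ we may fix $\eta$ small enough (hence $\alpha$, hence $T$) that it has measure at least $\beta\tfrac{S}{2r}t$ for all $t\ge T$, which is even stronger than claimed.

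The genuine obstacle is the uniformity in $s$ (and in the fibre variable $y$) of the survival estimate of part (i): the correction $E^{f_s,L_s}(u)\le E(s+u)=o(s+u)$ is not $o(u)$ uniformly in $s$, and $Z^{f_s,L_s}(0)$ degenerates near the edge of a very wide tube. Both are dealt with using the usual-conditions bounds $\int_0^t f'^2=O(t)$ and $\int_0^t L'^2=o(t)$ — so $|f'|$ and $|L'|$ are atypically large only on a set of density $O(K^{-2})$, which can be absorbed into the $o(t)$ terms — together with the bounded-width sub-tube comparison afforded by Lemma \ref{bdd_L}. This bookkeeping is where essentially all of the work lies, but it is routine in spirit; the conceptual content is the pairing of Lemma \ref{extinction_lem} with the spine decomposition in (i) and the inequality for running infima in (ii).
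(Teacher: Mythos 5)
Your overall architecture is the same as the paper's: a set of ``good'' times singled out by a future-increment condition on the Malthusian integral, a uniform lower bound on the delayed survival probability over the fibre obtained from Lemma \ref{extinction_lem} plus the spine decomposition and Jensen, the bounded-width device of Lemma \ref{bdd_L} to keep the cosine factor away from zero, and finally a density estimate for the good times. Your part (ii) is in fact a clean variant of the paper's density argument (and gives density close to $S/r$ rather than $S/(2r)$). The problem is in part (i), and it is exactly the point you flag and then wave away as ``routine bookkeeping''. Membership of $G_{\eta,C}$ controls only $M(s+u)-M(s)$; it gives no control whatsoever of $E^{f_s,L_s}(u)$, and the bound $E^{f_s,L_s}(u)\le E(s+u)=o(s+u)$ is useless for $u\ll s$, so the claimed estimate $\int_0^\infty 2r e^{-\eta u+C+E^{f_s,L_s}(u)}du\le K(\eta,C)$ simply does not follow. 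Writing $E^{f_s,L_s}(u)=\bigl(E(s+u)-E(s)\bigr)+|f'(s)|L(s)+\tfrac12|L'(s)|L(s)$, your proposed repair --- discarding the small-density set of times where $|f'|$ or $|L'|$ is atypically large, plus the bounded sub-tube --- handles only the two \emph{initial} terms (this is precisely the paper's $V\cap W$ together with $\tilde L\le L_0+1$). It does nothing for the increment $E(s+u)-E(s)$, which must be dominated, \emph{uniformly over all $u\ge0$ simultaneously}, by $\tfrac{\eta}{2}u+C'$ for the good $s$; since $E$ is neither monotone nor Lipschitz, a window increment at a late time $s$ can a priori be of order $o(s)$, not $O(u)$, so this is not bookkeeping but the crux of the lemma.

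There are two ways to close the gap. The paper's way is to build the error term into the good set from the start: its $J_t$ is the future infimum of $\int_0^\cdot(\cdots-q\tilde S)-E^{\tilde f,\tilde L}(\cdot)$, so on the set $U$ where $J$ increases one gets $\int_t^{t+s}(\cdots)\,du-E(t+s)+E(t)>q\tilde S s$ for free, and only the initial terms $|\tilde f'(t)|\tilde L(t)+\tfrac12|\tilde L'(t)|\tilde L(t)$ remain, killed on $V\cap W$; note, however, that if you adopt this you must redo your part (ii) for $M-E$, whose derivative is no longer pointwise bounded by $r-\eta+\tfrac{|L'|}{2L}$, so your Stieltjes-measure argument needs modification (the paper's $(p-q)\tilde S t\le J_t\le\int_0^t r\ind_U\,ds$ argument is the substitute). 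Alternatively, you can keep $G_{\eta,C}$ as defined and prove that the set of $s$ for which $\sup_{u\ge0}\bigl(E(s+u)-E(s)-\tfrac{\eta}{2}u\bigr)>C'$ has density $o(1)$: this can be done, but it needs a genuine argument --- e.g.\ first showing that $E$ has total variation $o(t)$ on $[0,t]$ (the boundary terms $|f'|L$ and $|L'|L$ have variation controlled by $\int|f''|L+\int|f'||L'|$ and $\int|L''|L+\int L'^2$, with $\int|f'||L'|=o(t)$ by Cauchy--Schwarz from $\int f'^2=O(t)$, $\int L'^2=o(t)$), and then a rising-sun-type covering of the bad set by intervals on which the increment of $E$ exceeds $\tfrac{\eta}{2}$ times the length. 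Neither route appears in your proposal, and without one of them the central uniform survival estimate, and hence the lemma, is not proved. The remaining ingredients (the use of Lemma \ref{extinction_lem} with Jensen, the re-centred bounded sub-tubes for fibre points far from the edge, and the density computation) are sound and match the paper in spirit.
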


\begin{proof}
Fix $q\in(0,\frac{1-\beta}{3})$ and $p\in(\beta+3q,1)$; we show that for
\[\alpha = \frac{q\tilde S \cos(\pi\delta/2)}{2re^{(L_0+1)(r\sqrt{2/q\tilde S}+1)}}\]
and all sufficiently large $t$ we have
\[\int_0^t \ind_{\{(s,x)\in U_\alpha \hsl \forall x \in [f(t)-\hat L(s), f(t)+\hat L(s)]\}} ds \geq (p-3q)\frac{S}{2r}t.\]
We begin working with $\tilde f$ and $\tilde L$; we shall move back to $f$ and $L$ towards the end of the proof.
Let
\[J_t = \inf_{s\geq t} \left\{\int_0^s \left(r - \frac{\pi^2}{8\tilde L(u)^2} - \frac{1}{2}\tilde f'(u)^2 + \frac{\tilde L'(u)}{2\tilde L(u)} - q\tilde S\right)du - E^{\tilde f, \tilde L}(s)\right\},\]
and define three subsets, $U$, $V$ and $W$, of $[0,\infty)$ by
\[U = \{t\geq0: J_t \hbox{ is increasing at } t\}, \hs V = \left\{t\geq 0 : |\tilde f'(t)|< r\sqrt{2/q\tilde S}\right\}\]
and
\[W = \{t\geq0 : |\tilde L'(t)|\leq 1\}.\]
If $J$ is increasing at $t$, then clearly for any $s>0$
\begin{multline*}
\int_0^{t+s}\left( r - \frac{\pi^2}{8\tilde L(u)^2} - \frac{1}{2}\tilde f'(u)^2 + \frac{\tilde L'(u)}{2\tilde L(u)} - q\tilde S\right) du - E^{\tilde f, \tilde L}(t+s)\\
> \int_0^t \left( r - \frac{\pi^2}{8\tilde L(u)^2} - \frac{1}{2}\tilde f'(u)^2 + \frac{\tilde L'(u)}{2\tilde L(u)} - q\tilde S\right) du - E^{\tilde f, \tilde L}(t),
\end{multline*}
and hence
\[\int_t^{t+s} \left(r - \frac{\pi^2}{8\tilde L(u)^2} - \frac{1}{2}\tilde f'(u)^2 + \frac{\tilde L'(u)}{2\tilde L(u)}\right) du - E^{\tilde f,\tilde L}(t+s) + E^{\tilde f, \tilde L}(t) > q\tilde S s.\]
Thus if $t\in U\cap V\cap W$ then, as in Proposition \ref{uiprop}, we can apply the spine decomposition and Lemma \ref{unscaled_int_lem} to get, for any $x\in (-\tilde L(t), \tilde L(t))$,
\begin{align*}
\Qt_x^{\tilde f_t,\tilde L_t}[Z^{\tilde f_t,\tilde L_t}(\infty) | \Gg_\infty] &= \int_0^\infty 2r e^{-rs}\zeta^{\tilde f_t,\tilde L_t}(s) ds + \lim_{t\to\infty}e^{-rt}\zeta^{\tilde f_t,\tilde L_t}(t)\\
&\leq \int_0^\infty 2r e^{-\int_0^s(r-\frac{\pi^2}{8\tilde L_t(u)^2} - \frac{1}{2}\tilde f_t'(u)^2 + \frac{\tilde L_t'(u)}{2\tilde L_t(u)})du + E^{\tilde f_t,\tilde L_t}(s)} ds\\
&\leq \int_0^\infty 2r e^{-\int_t^{t+s}(r-\frac{\pi^2}{8\tilde L(u)^2} - \frac{1}{2}\tilde f'(u)^2 + \frac{\tilde L'(u)}{2\tilde L(u)})du}\\
&\hspace{20mm} \cdot e^{E^{\tilde f,\tilde L}(t+s) - E^{\tilde f,\tilde L}(t) + |\tilde f'(t)|\tilde L(t) + \frac{1}{2}|\tilde L'(t)|\tilde L(t)} ds\\
&\leq e^{|\tilde f'(t)|\tilde L(t) + \frac{1}{2}|\tilde L'(t)|\tilde L(t)} \int_0^\infty 2r e^{-q\tilde S s} ds\\
&\leq \frac{2r}{q\tilde S}e^{(r\sqrt{2/q\tilde S} + 1/2)(L_0+1)}
\end{align*}
Using the identity from Lemma \ref{extinction_lem} together with Jensen's inequality gives that for any $x\in[\tilde f(t)-(((1-\delta)\tilde L(t))\vee(\tilde L(t)-\delta)), \tilde f(t)+((1-\delta)\tilde L(t))\vee(\tilde L(t)-\delta)]$,
\begin{align*}
&\Pb_x(Z^{\tilde f_t,\tilde L_t}(\infty)>0)\\
&= \Qb_x^{\tilde f_t,\tilde L_t}\left[\frac{Z^{\tilde f_t,\tilde L_t}(0)}{Z^{\tilde f_t,\tilde L_t}(\infty)}\right]\\
&\geq \Qt_x^{\tilde f_t,\tilde L_t}\left[ \Qt_x^{\tilde f_t,\tilde L_t}\left[\left. \frac{1}{Z^{\tilde f_t,\tilde L_t}(\infty)} \right| \Gg_\infty \right]\right] e^{-\frac{1}{2}\tilde L'(t)\tilde L(t)}\cos\left(\frac{\pi x}{2\tilde L(t)}\right) \\
&\geq \Qt_x^{\tilde f_t,\tilde L_t}\left[  \frac{1}{\Qt_x^{\tilde f_t,\tilde L_t}[Z^{\tilde f_t,\tilde L_t}(\infty)|\Gg_\infty]} \right] e^{-\frac{1}{2} L_0+1}\cos\left(\frac{\pi (L_0+1-\delta)}{2(L_0+1)}\right)\\
&\geq \frac{q\tilde S}{2r e^{(r\sqrt{2/q\tilde S} + 1)(L_0+1)}}\cos\left(\frac{\pi(L_0+1-\delta)}{2(L_0+1)}\right).
\end{align*}
Now, since
\begin{multline*}
[\tilde f(t)-(((1-\delta)\tilde L(t))\vee(\tilde L(t)-\delta)), \tilde f(t)-(((1-\delta)\tilde L(t))\vee(\tilde L(t)-\delta))]\\
\supseteq [f(t) + L(t)-\tilde L(t)-\hat L(t), f(t)+\hat L(t)]
\end{multline*}
we have shown that if $t\in U\cap V\cap W$ then $\Pb_x(Z^{f_t,L_t}(\infty)>0)$ is large enough for all $x\in[f(t)+L(t)-\tilde L(t)-\hat L(t), f(t)+\hat L(t)]$. If $x\in[f(t),f(t)+L(t)-\tilde L(t)-\hat L(t))$ then running the same argument as above but using $\tilde f^{(x)}(s):= \tilde f(s) - \tilde f(0) + x$, $s\geq0$ in place of $\tilde f$ gives exactly the same result: so we have that $\Pb_x(Z^{f_t,L_t}(\infty)>0)$ is large enough for the half-region $[f(t),f(t)+\hat L(t)]$ and by symmetry for the whole region $[f(t)-\hat L(t),f(t)+\hat L(t)]$. Hence it now suffices to show that for large $t$,
\[\int_0^t \ind_{U\cap V\cap W}(s) ds \geq (p-3q)\frac{S}{2r}t.\]
But for all large enough $t$, since $J$ increases at rate at most $r$ (recall that $\int_0^t \frac{\tilde L'(s)}{2\tilde L(s)} ds = \log \tilde L(t) - \log \tilde L(0)$, which is bounded) and $\lim_{t\to\infty}J_t = (1-q)\tilde S$,
\[(p-q)\tilde S t \leq J_t \leq \int_0^t r\ind_U(s) ds.\]
Also, for large enough $t$ we must have $\int_0^t \tilde f'(s)^2 ds \leq 2rt$ (otherwise $\tilde S$ would be negative). Thus for large $t$
\[2rt \geq \int_0^t \tilde f'(s)^2 ds \geq \int_0^t \frac{2r^2}{q\tilde S} \ind_{V^c}(s) ds;\]
finally,
\[\int_0^t \tilde L'(s)^2 ds = \tilde L(t)\tilde L'(t) - \tilde L(0)\tilde L'(0) + \int_0^t \tilde L(s)\tilde L''(s) ds\]
so since $E^{\tilde f,\tilde L}=o(t)$ we have (again for large $t$)
\[\int_0^t \ind_{W^c}(s) ds \leq \int_0^t \tilde L'(s)^2 ds \leq \frac{q\tilde S}{r}t.\]
Hence for all large $t$,
\begin{align*}
\int_0^t \ind_{U\cap V\cap W}(s) ds &\geq \int_0^t \ind_U(s) ds - \int_0^t \ind_{V^c}(s)ds - \int_0^t \ind_{W^c}(s)ds\\
&\geq (p-q)\frac{\tilde S}{r}t - q\frac{\tilde S}{r}t - q\frac{\tilde S}{r}t \geq (p-3q)\frac{S}{2r}t
\end{align*}
as required.
\end{proof}

We now show that if a particle has remained in the tube for a long time, then it is very likely to have spent a long time in $U_\alpha$. The idea is that if $U_\alpha$ stretches to within $\delta$ of the edge of the tube for a proportion of time, then in order to stay out of $U_\alpha$ a particle must spend a long time in a tube of radius $\delta$. We use simple estimates for the time spent by Brownian motion in such a tube and apply these to our problem via the many-to-one theorem (Theorem \ref{many_to_one}).

\begin{lem}\label{local_time_lem}
For any $\delta>0$ and $k>0$,
\[\Pt\left(\int_0^t \ind_{\{\xi_s \in (-\delta, \delta)\}} ds > k\right) \leq 3e^{t/2 - k/4\delta}.\]
\end{lem}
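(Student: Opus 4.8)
The plan is to bound the probability that a standard Brownian motion $\xi$ spends more than time $k$ in the interval $(-\delta,\delta)$ by time $t$, using an exponential Markov inequality on the occupation time. First I would write $\int_0^t \ind_{\{\xi_s\in(-\delta,\delta)\}}\,ds$ and note that by Markov's inequality, for any $\lambda>0$,
\[\Pt\left(\int_0^t \ind_{\{\xi_s\in(-\delta,\delta)\}}\,ds > k\right) \leq e^{-\lambda k}\,\Pt\left[\exp\left(\lambda\int_0^t \ind_{\{\xi_s\in(-\delta,\delta)\}}\,ds\right)\right].\]
So the task reduces to controlling the exponential moment $\Pt\bigl[\exp(\lambda\int_0^t \ind_{\{\xi_s\in(-\delta,\delta)\}}\,ds)\bigr]$, and then optimising over $\lambda$ to land on the claimed constants $e^{t/2-k/4\delta}$.

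The key step is estimating the exponential moment. One clean route is via a Feynman--Kac / eigenfunction bound: the function $x\mapsto \Pt_x\bigl[\exp(\lambda\int_0^t\ind_{\{\xi_s\in(-\delta,\delta)\}}\,ds)\bigr]$ solves the heat equation with potential $\lambda\ind_{(-\delta,\delta)}$, and is dominated by $e^{\mu t}$ times the corresponding ground-state-type bound, where $\mu$ is the top of the spectrum of $\frac12\partial_{xx}+\lambda\ind_{(-\delta,\delta)}$. A more elementary route, which I would prefer to keep the argument self-contained, is to split $[0,t]$ into successive excursions: each time $\xi$ enters $(-\delta,\delta)$ it stays for an $\mathrm{Exp}$-like amount of time before exiting, and the time to traverse from $-\delta$ to $\delta$ (or to exit a $\delta$-ball) has an exponential tail with rate of order $1/\delta^2$ or, crucially, the total occupation before leaving each visit is controlled so that the occupation time is stochastically dominated by a sum whose Laplace transform is explicit. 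Concretely, I would use that for Brownian motion started inside $(-\delta,\delta)$, the amount of time spent in $(-\delta,\delta)$ before first exiting has an exponential moment: $\Pt_x[e^{\lambda \tau}] \leq \sec(\sqrt{2\lambda}\,\delta)$ for $\lambda < \pi^2/8\delta^2$ (this is exactly the cosine eigenfunction bound already lurking in the paper's martingale $V$). Iterating over the geometric number of returns, one gets a bound of the shape $C\,e^{\lambda' t}$ with $\lambda'$ that one can take, say, $\leq 1/2$ by choosing $\lambda$ appropriately as a function of $\delta$; tracking constants then yields the factor $3$ and the exponent $t/2 - k/4\delta$.

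The main obstacle I anticipate is getting the constants exactly right — in particular arranging the trade-off between the "$t/2$" growth term and the "$k/4\delta$" decay term while keeping the leading constant down to $3$, which will force a careful (but not deep) choice of $\lambda$ of order $1/\delta$ and careful bookkeeping of the excursion-decomposition or Feynman--Kac estimate. The probabilistic content is standard; the only real work is making the inequality clean enough to quote later. I would also double-check the small-$\delta$ versus large-$\delta$ regimes, since the cosine bound requires $\sqrt{2\lambda}\,\delta$ to stay below $\pi/2$, which may mean choosing $\lambda \asymp 1/\delta$ rather than $\lambda$ constant, and confirming that the resulting exponent still dominates the desired $t/2 - k/4\delta$ in all cases; a crude union bound over unit time intervals can absorb any boundary issues at the cost of the constant $3$.
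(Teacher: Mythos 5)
Your strategy --- exponential Chebyshev applied to the occupation time $A_t:=\int_0^t \ind_{\{\xi_s\in(-\delta,\delta)\}}ds$ with $\lambda$ of order $1/\delta$ --- is genuinely different from the paper's, and it could probably be pushed through, but as written the central estimate is missing: you never establish $\Pt\left[e^{\lambda A_t}\right]\leq C e^{t/2}$ for $\lambda=1/4\delta$ uniformly in $\delta$, and each of the two routes you sketch for it has an unaddressed obstruction. For the Feynman--Kac/ground-state route, the positive eigenfunction $\psi$ of $\tfrac12\psi''+\lambda\ind_{(-\delta,\delta)}\psi=\mu\psi$ decays to $0$ at infinity, so the (super)martingale argument only yields $\Pt\left[\psi(\xi_t)e^{\lambda A_t}\right]\leq\psi(0)e^{\mu t}$; you cannot divide by $\inf\psi$ to control $\Pt\left[e^{\lambda A_t}\right]$ itself, and some further ingredient (Khas'minskii-type chaining, a resolvent computation, or a comparison function bounded away from zero) is required, together with the check that $\mu(1/4\delta)\leq 1/2$ for every $\delta$. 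For the excursion route, ``iterating over the geometric number of returns'' does not make sense as stated: after exiting $(-\delta,\delta)$ Brownian motion re-enters immediately, so the number of visits to the strip in $[0,t]$ is not finite, let alone geometric; one has to count crossings between two separated levels (say $\delta$ and $2\delta$) and account for the time spent outside, which is precisely where the work lies. (Also note the secant bound needs $\sqrt{2\lambda}\,\delta<\pi/2$, so with $\lambda=1/4\delta$ the large-$\delta$ regime must be handled separately, though there the trivial bound $A_t\leq t$ suffices.)

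For comparison, the paper's proof sidesteps exponential moments of $A_t$ entirely. It verifies an It\^o (smoothed Tanaka) identity for the $C^1$ function $h_\delta$ equal to $|x|$ outside $(-\delta,\delta)$ and $\tfrac{\delta}{2}+\tfrac{x^2}{2\delta}$ inside, namely $h_\delta(\xi_t)=\tfrac{\delta}{2}+\int_0^t h_\delta'(\xi_s)d\xi_s+\tfrac{1}{2\delta}A_t$, so that on $\{A_t>k\}$ one has $|\xi_t|-\int_0^t h_\delta'(\xi_s)d\xi_s>k/2\delta$; since $|h_\delta'|\leq1$, exponential Markov applied separately to $|\xi_t|$ and to the stochastic integral (via the exponential supermartingale bound $\Pt[e^{-\int_0^t h_\delta'(\xi_s)d\xi_s}]\leq e^{t/2}$) gives $2e^{t/2-k/4\delta}+e^{t/2-k/4\delta}=3e^{t/2-k/4\delta}$. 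If you want to rescue your plan, this identity is also the cheapest way to obtain the exponential-moment bound you are missing, since it expresses $\lambda A_t$ through quantities with Gaussian-type exponential moments.
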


\begin{proof}
We first claim that if we define $h_\delta: \Rb\to\Rb$ by
\[h_\delta(x) := \left\{\begin{array}{ll} |x| & \hbox{if } |x|\geq \delta \\ \frac{\delta}{2} + \frac{x^2}{2\delta} & \hbox{if } |x|<\delta \end{array}\right.\]
then
\[h_\delta(\xi_t) = \frac{\delta}{2} + \int_0^t h'_\delta (\xi_s) d\xi_s + \frac{1}{2\delta}\int_0^t \ind_{\{\xi_s\in(-\delta,\delta)\}} ds.\]
We check, by approximation with $C^2$ functions, that It\^o's formula holds for $h_\delta$. Define a function $g_{\delta,n}\in C^2(\Rb)$ for each $n\in\mathbb{N}$ by setting
\[g_{\delta,n}''(s) = \left\{\begin{array}{ll} 0 & \hbox{if } |x|\geq \delta \\ \frac{n}{\delta}(\delta-|x|) & \hbox{if } \delta-\frac{1}{n}<|x|<\delta \\ \frac{1}{\delta} & \hbox{if } |x|<\delta - \frac{1}{n} \end{array}\right.\]
with $g_{\delta,n}'(0)=0$, $g_{\delta,n}(0)=\delta/2$. Since $g\in C^2$, It\^o's formula tells us that
\[g_{\delta,n}(\xi_t) = g_{\delta,n}(\xi_0) + \int_0^t g_{\delta,n}'(\xi_s)d\xi_s + \frac{1}{2}\int_0^t g_{\delta,n}''(\xi_s)ds.\]
Since $g_{\delta,n}''\to h_\delta''$ Lebesgue-almost everywhere, by bounded convergence
\[\int_0^t g_{\delta,n}''(\xi_s) ds \to \int_0^t h_\delta''(\xi_s) ds \hs \Pt\hbox{-almost surely},\]
and $g_{\delta,n}\to h_\delta$ uniformly so for each $t$, $g_{\delta,n}(\xi_t) \to h_\delta(\xi_t)$ $\Pt$-almost surely. Also, by the It\^o isometry
\[\Pt\left[\left(\int_0^t (g_{\delta,n}'(\xi_s) - h_\delta'(\xi_s))d\xi_s\right)^2\right] = \Pt\left[\int_0^t (g_{\delta,n}'(\xi_s) - h_\delta'(\xi_s))^2 ds\right];\]
since $g_{\delta,n}'\to h_\delta'$ uniformly, the right hand side above converges to zero, and hence
\[\int_0^t g_{\delta,n}'(\xi_s)d\xi_s \to \int_0^t h_\delta'(\xi_s)d\xi_s \hs \Pt\hbox{-almost surely}.\]
Thus It\^o's formula does indeed hold for $h_\delta$, and since
\[\frac{1}{2}\int_0^t f_\delta''(s)ds = \frac{1}{2\delta}\int_0^t \ind_{\{\xi_s\in(-\delta,\delta)\}} ds\]
our claim holds. Now recall that under $\Pt$, the spine's motion is simply a Brownian motion, so
\[\Pt[e^{-\int_0^t h'_\delta(\xi_s)d\xi_s}] \leq \Pt[e^{-\int_0^t h'_\delta(\xi_s)d\xi_s - \frac{1}{2}\int_0^t h'_\delta(\xi_s)^2 ds}]e^{t/2} \leq e^{t/2}.\]
Thus
\begin{align*}
\Pt\left(\int_0^t \ind_{\{\xi_s \in (-\delta,\delta)\}} ds > k\right) &= \Pt\left(h_\delta(\xi_t) - \frac{\delta}{2} - \int_0^t h_\delta '(\xi_s) d\xi_s > \frac{k}{2\delta}\right)\\
&\leq \Pt\left(|\xi_t| - \int_0^t h_\delta ' (\xi_s) d\xi_s > \frac{k}{2\delta}\right)\\
&\leq \Pt\left(|\xi_t| > \frac{k}{4\delta}\right) + \Pt\left(-\int_0^t h_\delta ' (\xi_s) d\xi_s > \frac{k}{4\delta}\right)\\
&\leq \Pt\left[e^{|\xi_t|}\right]e^{-k/4\delta} + \Pt\left[e^{- \int_0^t h_\delta ' (\xi_s) d\xi_s}\right]e^{-k/4\delta}\\
&\leq 3e^{t/2 - k/4\delta},
\end{align*}
establishing the result.
\end{proof}

\begin{lem}\label{spine_U}
Fix $\beta<1$ and $\gamma>0$. If $S >0$ then for sufficiently small $\alpha>0$ and large $T$, we have
\[\Pb(\exists u \in\hat N(t) : I_\alpha(u;t)< \beta\frac{S}{2r}t) \leq e^{-\gamma t} \hs\hs \forall t\geq T.\]
\end{lem}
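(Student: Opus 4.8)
The plan is to bound the probability in the statement by an expectation via the Many-to-One theorem (Theorem~\ref{many_to_one}), push the resulting spine expectation onto a driftless Brownian motion by two Girsanov changes of measure whose cost is subexponential, and then combine Lemma~\ref{tube_filling} with the indicator $I_\alpha(\xi;t)<\beta\tfrac{S}{2r}t$ to show that the relevant event forces the spine to spend a linear amount of time inside a strip of width $O(\delta)$ hugging the boundary of the $L$-tube --- an event which, by Lemma~\ref{local_time_lem}, has probability decaying like $\exp(t/2-ct/\delta)$ and can therefore be made arbitrarily small by taking $\delta$ small.

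First I would fix the hierarchy of constants, in the order dictated by the dependencies. Set $\beta':=(1+\beta)/2\in(\beta,1)$ and $\kappa:=(\beta'-\beta)\tfrac{S}{2r}>0$ (using $S>0$); note $\kappa$ depends only on $\beta,S,r$. Choose $\delta\in(0,1)$ small enough that $\kappa/(4\delta)>r+\tfrac12+\gamma+1$, and then --- with this $\delta$ and the exponent $\beta'$ --- invoke Lemma~\ref{tube_filling} to obtain a threshold $\alpha>0$ and a time $T_0$ so that for all $t\ge T_0$
\[
\int_0^t \ind_{\{[f(s)-\hat L(s),\,f(s)+\hat L(s)]\subseteq U_\alpha\}}\,ds \ \ge\ \beta'\tfrac{S}{2r}\,t .
\]
(Decreasing $\alpha$ only enlarges $U_\alpha$, so this persists for smaller $\alpha$, matching the ``sufficiently small $\alpha$'' in the statement.) Writing $\kappa_1:=\beta\tfrac{S}{2r}$ and taking $I_\alpha(\xi;t):=\int_0^t\ind_{\{(s,\xi_s)\in U_\alpha\}}\,ds$ for the spine analogue of $I_\alpha$, Theorem~\ref{many_to_one} applied to $f_u(t)=\ind_{\{u\in\hat N(t)\}}\ind_{\{I_\alpha(u;t)<\kappa_1 t\}}$ gives
\[
\Pb\big(\exists u\in\hat N(t): I_\alpha(u;t)<\kappa_1 t\big)\ \le\ \Pb\Big[\textstyle\sum_{u\in N(t)}\ind_{\{u\in\hat N(t)\}}\ind_{\{I_\alpha(u;t)<\kappa_1 t\}}\Big]\ =\ e^{rt}\,\Pt\big(\xi_t\in\hat N(t),\ I_\alpha(\xi;t)<\kappa_1 t\big),
\]
so it suffices to bound the spine probability by $e^{-(r+\gamma)t}$ for large $t$.

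Next, under $\Pt$ the spine is a standard Brownian motion, so $\eta_s:=\xi_s-f(s)$ has drift $-f'$. I would apply Girsanov with density $M(t)=\exp(\int_0^t f'(s)\,d\xi_s-\tfrac12\int_0^t f'(s)^2\,ds)$ to pass to a measure $\hat\Pt$ under which $\eta$ is a driftless Brownian motion from $0$; on $\{\xi_t\in\hat N(t)\}$ the (pathwise) estimate of Lemma~\ref{unscaled_int_lem} applied to the spine (with $x=0$) gives $\bigl|\int_0^t f'(s)\,d\xi_s-\int_0^t f'(s)^2\,ds\bigr|\le E(t)$, whence, using $\int_0^t f'(s)^2\,ds\ge 0$, we get $1/M(t)\le e^{E(t)}=e^{o(t)}$ there, so $\Pt(\xi_t\in\hat N(t),\,I_\alpha(\xi;t)<\kappa_1 t)\le e^{o(t)}\hat\Pt(\xi_t\in\hat N(t),\,I_\alpha(\xi;t)<\kappa_1 t)$. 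On this event, for $t\ge T_0$, subtracting $I_\alpha(\xi;t)<\kappa_1 t$ from the deterministic bound above forces $\eta$ to spend at least $(\beta'-\beta)\tfrac{S}{2r}t=\kappa t$ units of time in $\{\hat L(s)<|\eta_s|<L(s)\}$ (any $s$ with $[f(s)-\hat L(s),f(s)+\hat L(s)]\subseteq U_\alpha$ but $\xi_s\notin U_\alpha$ must have $\hat L(s)<|\xi_s-f(s)|<L(s)$, since $\xi_t\in\hat N(t)$ keeps $|\eta_s|<L(s)$). Since $L(s)-\hat L(s)\le\delta$, this region lies in two strips of width $\delta$ hugging $\pm L(s)$; as $\hat\Pt(A_0)\le\hat\Pt(A^+)+\hat\Pt(A^-)=2\hat\Pt(A^+)$ by the symmetry of $\eta$ under $\hat\Pt$, we may work with the upper strip, i.e.\ assume $\int_0^t\ind_{\{L(s)-\delta<\eta_s<L(s)\}}\,ds\ge\kappa t/2$. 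Setting $\rho_s:=L(s)-\eta_s$, a second Girsanov removing the drift $L'$ of $\rho$ carries us to a measure $\tilde\Pt$ under which $\rho$ is a driftless Brownian motion started from the ($t$-independent) value $L(0)$; exactly as before, using $\int_0^t L'(s)^2\,ds=o(t)$ and (integrating by parts on the tube, where $0<\rho_s<2L(s)$) $\int_0^t L'(s)\,d\rho_s=o(t)$ --- both consequences of the usual conditions --- the Radon--Nikodym density is $\le e^{o(t)}$ on the event, so $\hat\Pt(A^+)\le e^{o(t)}\tilde\Pt(\int_0^t\ind_{\{0<\rho_s<\delta\}}\,ds\ge\kappa t/2)$, and Lemma~\ref{local_time_lem} (or the obvious variant for the interval $(0,\delta)$ and a bounded starting point, which costs only a multiplicative constant) bounds this by $C\exp(t/2-\kappa t/(4\delta))$.

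Collecting the estimates,
\[
\Pb\big(\exists u\in\hat N(t): I_\alpha(u;t)<\kappa_1 t\big)\ \le\ 2C\,\exp\!\Big((r+\tfrac12)t-\tfrac{\kappa}{4\delta}\,t+o(t)\Big),
\]
and the choice $\kappa/(4\delta)>r+\tfrac12+\gamma+1$ makes the exponent at most $-(\gamma+1)t+o(t)$, hence $\le-\gamma t$ for all $t$ beyond some $T\ge T_0$, which proves the lemma. I expect the main obstacle to be the second Girsanov step: it is what permits a genuinely moving tube boundary, and keeping its cost at $e^{o(t)}$ relies on the integration-by-parts bounds together with the full strength of the usual conditions; a secondary delicacy is threading the constants $\beta'\to\delta\to\alpha\to T$ in precisely the order that lets $\delta$ be taken small enough to swamp both the branching factor $e^{rt}$ and the $e^{t/2}$ coming from the local-time estimate.
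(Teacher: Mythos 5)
Your proposal is correct and follows essentially the same route as the paper's proof: Lemma \ref{tube_filling} to locate the good region, the observation that small $I_\alpha$ plus survival in the tube forces linear time within $\delta$ of the boundary, two Girsanov changes (removing the $f'$ and then the $L'$ drift) whose costs are $e^{o(t)}$ by Lemma \ref{unscaled_int_lem}-type integration by parts under the usual conditions, the local-time estimate of Lemma \ref{local_time_lem}, and the many-to-one theorem, with $\delta$ chosen small enough to beat $e^{rt}$ and $e^{t/2}$. The only differences are cosmetic (applying many-to-one at the start rather than the end, naming the intermediate measures, and invoking the harmless variant of Lemma \ref{local_time_lem} for a Brownian motion started at $L(0)$, where the paper recentres so the lemma applies verbatim).
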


\begin{proof}
For any $\delta\in(0,1)$, by Lemma \ref{tube_filling} we may choose $\alpha>0$ and $T$ such that
\[\int_0^t \ind_{\{(s,x)\in U_\alpha \hsl \forall x\in[f(t)-L+\delta,f(t)+L-\delta]\}} ds \geq \left(\frac{1+\beta}{2}\right)\frac{S}{2r}t \hs \forall t\geq T.\]
Then if the spine particle is to have spent less than $\beta\frac{S}{2r}t$ time in $U_\alpha$ (yet remained within the tube of width $L$) then it must have spent at least $(\frac{1-\beta}{2})\frac{S}{2r}t$ within $\delta$ of the edge of the tube (provided that $t$ is large enough). That is, for $t\geq T$, if we let
\[V^1_s:=(f(s)-L(s), f(s)-L(s)+\delta)\cup(f(s)+L(s)-\delta, f(s)+L(s))\]
then
\begin{multline*}
\Pt\left(\xi_t \in \hat N(t), I_\alpha(\xi_t;t)<\beta\frac{S}{2r}t\right)\\
\leq \Pt\left(\xi_t\in\hat N(t), \int_0^t \ind_{\{\xi_s \in V^1_s\}}ds > \left(\frac{1-\beta}{2}\right)\frac{S}{2r}t\right).
\end{multline*}
In fact, using the fact that if $\xi_t \in \hat N(t)$ then we may apply two simple Girsanov measure changes and our usual estimates on them. The first will give the spine drift $f'$, and the second will give it an extra drift $L'$. Letting
\[V^2_s:=(-L(s), -L(s)+\delta)\cup(L(s)-\delta,L(s))\]
we have
\begin{align*}
\Pt&\bigg(\xi_t \in \hat N(t), I_\alpha(\xi_t;t)<\beta\frac{S}{2r}t\bigg)\\
&\leq \Pt\left[\frac{\ind_{\{|\xi_s|<L(s)\hsl\forall s\in[0,t]\}}}{e^{\int_0^t f'(s)d\xi_s - \frac{1}{2}\int_0^t f'(s)^2 ds}}\ind_{\{\int_0^t \ind_{\{\xi_s \in V^2_s\}}ds > (\frac{1-\beta}{2})\frac{S}{2r}t\}}\right]\\
&\leq e^{|f'(t)|L(t)+\int_0^t |f''(s)|L(s)ds}\\
&\hspace{10mm}\cdot\Pt\left(|\xi_s|<L(s)\hsl\forall s\in[0,t], \int_0^t \ind_{\{\xi_s \in V^2_s\}}ds > \left(\frac{1-\beta}{2}\right)\frac{S}{2r}t\right)\\
&\leq 2e^{|f'(t)|L(t)+\int_0^t |f''(s)|L(s)ds}\\
&\hspace{10mm}\cdot\Pt\left(|\xi_s|<L(s)\hsl\forall s\in[0,t], \int_0^t \ind_{\{\xi_s \in (L(s)-\delta,L(s))\}}ds > \left(\frac{1-\beta}{2}\right)\frac{S}{4r}t\right)\\
&\leq 2e^{|f'(t)|L(t)+\int_0^t |f''(s)|L(s)ds}\\
&\hspace{10mm}\cdot\Pt\left[\frac{\ind_{|\xi_s|<2L(s)\hsl\forall s\in[0,t]}}{e^{\int_0^t L'(s)d\xi_s - \frac{1}{2}\int_0^t L'(s)^2 ds}}\ind_{\{\int_0^t \ind_{\{\xi_s \in (L(s)-\delta,L(s))\}}ds > \left(\frac{1-\beta}{2}\right)\frac{S}{4r}t\}}\right]\\
&\leq 2e^{|f'(t)|L(t)+\int_0^t |f''(s)|L(s)ds + 2|L'(t)|L(t)+2\int_0^t |L''(s)|L(s)ds}\\
&\hspace{10mm}\cdot\Pt\left(\int_0^t \ind_{\{\xi_s \in (-\delta,0)\}}ds > \left(\frac{1-\beta}{2}\right)\frac{S}{4r}t\right).
\end{align*}
Using the estimate given in Lemma \ref{local_time_lem}, and usual condition (III), we get that for large enough $t$
\[\Pt\left(\xi_t \in \hat N(t), I_\alpha(\xi_t;t)<\beta\frac{S}{2r}t\right) \leq e^{(r+1)t - \frac{1}{4\delta}\left(\frac{1-\beta}{2}\right)\frac{S}{4r}t}.\]
Finally, taking $\delta = \frac{(1-\beta)S}{32r(2r+\gamma+1)}$ and using the many-to-one theorem (Theorem \ref{many_to_one}), for large $t$
\[\Pt\left(\exists u \in \hat N(t) : I_\alpha(u;t)<\beta\frac{S}{2r}t\right) \leq e^{rt}\Pt\left(\xi_t \in \hat N(t), I_\alpha(\xi_t;t)<\beta\frac{S}{2r}t\right) \leq e^{-\gamma t}.\qedhere\]
\end{proof}
We now combine the above results to achieve the aim of this section.

\begin{prop}\label{invspineprop}
Recall that $\Upsilon$ is the extinction time for the process. If $S > 0$ then
\[\Pb(\Upsilon = \infty) = \Pb(Z(\infty)>0).\]
\end{prop}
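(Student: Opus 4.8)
The plan is to prove the two inclusions separately. The inclusion $\{Z(\infty)>0\}\subseteq\{\Upsilon=\infty\}$ is immediate: if $\Upsilon<\infty$ then $\hat N(t)=\emptyset$ for all $t>\Upsilon$, so $Z(t)=\sum_{u\in\hat N(t)}e^{-rt}\zeta_u(t)=0$ for such $t$ and hence $Z(\infty)=0$. This already gives $\Pb(Z(\infty)>0)\le\Pb(\Upsilon=\infty)$, so the whole content is the reverse estimate $\Pb(\Upsilon=\infty,\,Z(\infty)=0)=0$.

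For that I would use a martingale-convergence squeeze fed by Lemma \ref{spine_U}. The branching property says that, given $\Fg_t$, a particle $u\in N(t)$ at position $X_u(t)$ spawns an independent subtree which is a copy of the model with the delayed data $(f_t,L_t)$ started from $X_u(t)-f(t)$; writing $Z^{(u)}$ for the contribution of this subtree to $Z$, one has $Z(s)\ge Z^{(u)}(s)$ for all $s\ge t$, and $Z^{(u)}$ has a strictly positive limit exactly when the corresponding delayed martingale does. Hence, for any $u\in\hat N(t)$,
\[\Pb(Z(\infty)=0\mid\Fg_t)\;\le\;\Pb(Z^{(u)}(\infty)=0\mid\Fg_t)\;=\;1-\Pb_{X_u(t)-f(t)}(Z^{f_t,L_t}(\infty)>0),\]
and in particular, if some $u\in\hat N(t)$ has $X_u(t)\in U_\alpha$ then $\Pb(Z(\infty)=0\mid\Fg_t)\le 1-\alpha$ by the definition of $U_\alpha$.

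Now fix $\beta\in(0,1)$ and $\gamma>0$ and take $\alpha,T$ as supplied by Lemma \ref{spine_U}, so that $\Pb(\exists u\in\hat N(n):I_\alpha(u;n)<\beta\tfrac{S}{2r}n)\le e^{-\gamma n}$ for all large integers $n$. Since this is summable, Borel--Cantelli gives that, $\Pb$-almost surely, for all large $n$ every $u\in\hat N(n)$ has $I_\alpha(u;n)\ge\beta\tfrac{S}{2r}n$. On $\{\Upsilon=\infty\}$ such a $u=u_n$ exists for every $n$, and since $u_n\in\hat N(n)$ its ancestor alive at any time $s\le n$ lies in $\hat N(s)$ at position $X_{u_n}(s)$; hence $\{s\le n: X_{u_n}(s)\in U_\alpha\}$ is contained in $\{s\le n:\ \exists v\in\hat N(s),\ X_v(s)\in U_\alpha\}$, so this latter set has Lebesgue measure at least $\beta\tfrac{S}{2r}n$ for all large $n$, and in particular is unbounded. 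Thus, almost surely on $\{\Upsilon=\infty\}$, there are arbitrarily large times $s$ at which some $v\in\hat N(s)$ has $X_v(s)\in U_\alpha$, and at each such time the display above gives $\Pb(Z(\infty)=0\mid\Fg_s)\le 1-\alpha$; so $\liminf_{s\to\infty}\Pb(Z(\infty)=0\mid\Fg_s)\le 1-\alpha$ a.s. on $\{\Upsilon=\infty\}$. But by L\'evy's upward theorem $\Pb(Z(\infty)=0\mid\Fg_s)\to\ind_{\{Z(\infty)=0\}}$ a.s., which equals $1$ on $\{Z(\infty)=0\}$; these are incompatible on $\{\Upsilon=\infty\}\cap\{Z(\infty)=0\}$, forcing $\Pb(\Upsilon=\infty,Z(\infty)=0)=0$, i.e.\ $\Pb(\Upsilon=\infty)=\Pb(\Upsilon=\infty,Z(\infty)>0)\le\Pb(Z(\infty)>0)$. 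With the first inclusion this is the desired equality.

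The main obstacle I expect is the bookkeeping behind the conditional estimate above in this time-inhomogeneous setting: one must verify that the subtree hanging off $u\in N(t)$ really is governed by the delayed martingale $Z^{f_t,L_t}$ started from $X_u(t)-f(t)$ (keeping track of the Girsanov and cosine prefactors, the fact that a particle which has already left the tube contributes nothing, and that the delayed pair $(f_t,L_t)$ still satisfies the usual conditions with $S^{f_t,L_t}=S>0$, so that $U_\alpha$ is genuinely substantial --- which is where Lemma \ref{tube_filling} is used). One is tempted instead to argue more concretely that a surviving lineage accumulates many offspring subtrees born in $U_\alpha$, each independently contributing with probability at least $\alpha$; but a naive many-to-one bound there overcounts (there are exponentially many surviving particles, most of them atypical), so one cannot simply select an arbitrary surviving lineage, and the martingale-convergence argument above is the clean way around this.
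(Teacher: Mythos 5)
Your proposal is correct in substance and closes the argument by a genuinely different mechanism from the paper's. Both proofs rest on the same two pillars: Lemma \ref{spine_U} (hence Lemma \ref{tube_filling}), and the decomposition of $Z$ over subtrees rooted at space-time points of $U_\alpha$, each contributing to $Z(\infty)$ with conditional probability at least $\alpha$. The paper then works at a single large deterministic time $T$: on survival, a particle of $\hat N(T)$ has spent time at least $\frac{S}{4r}T$ in $U_\alpha$, so the Poisson births along its ancestry launch at least $m$ independent subtrees from points of $U_\alpha$, and the failure probability $(1-\alpha)^m$ is absorbed into an $\varepsilon/3$-budget; nothing beyond the branching property is needed. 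You instead use only the surviving particle's own presence in $U_\alpha$ at large times, deduce $\Pb(Z(\infty)=0\mid\Fg_s)\le 1-\alpha$ at such times, and let L\'evy's upward theorem upgrade the fixed gap $\alpha$ to a contradiction on $\{\Upsilon=\infty,\,Z(\infty)=0\}$, with Borel--Cantelli over integer times supplying the good times; this avoids choosing $m$ and $T$ and the Poisson computation, at the price of the martingale-convergence input (the paper's route is more elementary and quantitative). The subtree bookkeeping you flag (delayed pair $(f_t,L_t)$, the positive $\Fg_t$-measurable Girsanov/cosine prefactor, $S^{f_t,L_t}=S$) is genuinely needed, but it is exactly the decomposition the paper itself uses, so it is not an additional obstacle.

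One step should be tightened. The bound $\Pb(Z(\infty)=0\mid\Fg_s)\le 1-\alpha$ holds, for each \emph{fixed} $s$, almost surely on the event $A_s=\{\exists v\in\hat N(s):(s,X_v(s))\in U_\alpha\}$ (note $U_\alpha$ is a space-time set, so membership should read $(s,X_v(s))\in U_\alpha$), with an exceptional null set that may depend on $s$; you then invoke it at random times, and since these form a set of positive Lebesgue measure rather than one containing fixed countably many points, you cannot simply pass to a grid. The fix is available inside your own argument: writing $M_s=\Pb(Z(\infty)=0\mid\Fg_s)$ and $B$ for the random set of good times, you have $\Pb(M_s>1-\alpha,\ s\in B)=0$ for every $s$, so Fubini gives that almost surely $\{s\in B: M_s>1-\alpha\}$ is Lebesgue-null; since you showed $B\cap[0,n]$ has measure at least $\beta\frac{S}{2r}n$ for all large $n$ on survival, the set $\{s\in B: M_s\le 1-\alpha\}$ is almost surely unbounded on survival, and the remainder of your argument goes through verbatim.
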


\begin{proof}
We note that $\{Z(\infty)>0\} \subseteq \{\Upsilon = \infty\}$, so it suffices to show that for any $\varepsilon >0$,
\[\Pb(\Upsilon = \infty, \hs Z(\infty)=0) < \varepsilon.\]
To this end, fix $\varepsilon > 0$ and choose $\alpha$ small enough and $T_0$ large enough that
\[\Pb(\exists u \in\hat N(t) : I_\alpha(u;t)< \frac{S}{4r}t) < \varepsilon/3 \hs \forall t\geq T_0\]
(this is possible by Lemma \ref{spine_U}). Now choose an integer $m$ large enough such that \mbox{$(1-\alpha)^m < \varepsilon/3$}. Finally, choose $T\geq T_0$ large enough that
\[\sum_{j=0}^{m-1} \frac{e^{-S T / 4}(S T / 4)^j}{j!} < \varepsilon/3.\]
Then
\begin{align*}
\Pb(\Upsilon = \infty, \hs Z(\infty)=0) &\leq \Pb(\exists u \in \hat N(T), \hs Z(\infty)=0)\\
&< \Pb\left(\exists u \in \hat N(T), \hsl I_\alpha(u;T) \geq \frac{S}{4r}T, \hsl Z(\infty)=0\right) + \varepsilon/3.
\end{align*}
Now, if a particle $u$ has spent at least $\frac{S}{4r}T$ time in $U_\alpha$ then (by the choice of $T$, since the births along $u$ form a Poisson process of rate $r$) it has probability at least $(1-\varepsilon/3)$ of having at least $m$ births whilst in $U_\alpha$. Each of these particles born within $U_\alpha$ launches an independent population from a point $(t,x)\in U_\alpha$, so that
\[Z(\infty)\geq \sum_{v<u} e^{-r (S_v-\sigma_v)} Z_v(\infty) \ind_{\{(S_v-\sigma_v, X_u(S_v-\sigma_v))\in U_\alpha\}}\]
where each $Z_v$ is a non-negative martingale on the interval $[S_v-\sigma_v,\infty)$ with law equal to that of $Z^{f_t,L_t}$ started from $x$ for some $(t,x)\in U_\alpha$, and hence satisfying \mbox{$\Pb(Z_v(\infty)>0) \geq \alpha$}. Thus
\begin{align*}
&\Pb(\Upsilon = \infty,\hsl Z(\infty)=0)\\
&\leq \Pb\left(\exists u \in \hat N(T), \hs I_\alpha(u;T) \geq \frac{S}{4r}T, \hs Z(\infty)=0\right) + \varepsilon/3\\
&\leq \Pb\left(\exists u \in \hat N(T), \hs \left\{\begin{array}{c} u \hbox{ has had at least}\\ m \hbox{ births within } U_\alpha\end{array}\right\}, \hs Z(\infty)=0\right) + 2\varepsilon/3\\
&\leq (1-\alpha)^m + 2\varepsilon/3 \hs < \hs \varepsilon
\end{align*}
which completes the proof.
\end{proof}

We draw our results together as follows.

\begin{mainprf}
All that remains is to combine Proposition \ref{uiprop} with Corrolary \ref{lim_cor} to gain the desired growth bounds; Proposition \ref{invspineprop} guarantees that we are working on the correct set.\qed
\end{mainprf}

\section{Extending the class of functions}\label{extensions}
As promised, we can extend Theorem \ref{mainthm} to cover more general subsets of $C[0,\infty)$ in an obvious way: if a set $B\subset C[0,\infty)$ is contained within (or contains) an $L$-tube about a function $f$, then the set of particles with paths in $B$ is a subset (respectively, superset) of the set of particles with paths within $L$ of $f$, and if $(f,L)$ satisfies our usual conditions then we have an immediate upper (lower) bound on the number of particles within $B$. That is, for any $B\subset C[0,\infty)$,
\begin{equation}\label{ext1}
\sup \Pb(\hat N^{f,L}(t)\neq\emptyset) \leq \Pb(\hat N^B(t)\neq\emptyset) \leq \inf \Pb(\hat N^{f,L}(t)\neq\emptyset)
\end{equation}
and
\begin{equation}\label{ext2}
\sup |\hat N^{f,L}(t)|  \leq |N^B(t)| \leq \inf |\hat N^{f,L}(t)|
\end{equation}
where both suprema are taken over all $f$ and $L$ such that $(f,L)$ satisfies our usual conditions and
\[\{g\in C[0,\infty) : |g(s)-f(s)| < L(s) \hs\forall s\in[0,\infty)\} \subseteq B,\]
both infima are taken over all $f$ and $L$ such that $(f,L)$ satisfies our usual conditions and
\[B \subseteq \{g\in C[0,\infty) : |g(s)-f(s)| < L(s) \hs\forall s\in[0,\infty)\},\]
and
\[N^B(t):= \{u\in N(t) : \exists g\in B \hbox{ with } X_u(s)=g(s) \hs \forall s\in[0,t]\}.\]
The obvious question now is whether this allows us to give growth rates for all sets in $C[0,\infty)$. The answer is no: there are still some seemingly reasonable sets that are not covered (which we shall see shortly).

Thus the natural question becomes whether we can instead characterise, in a more succinct way, the class of functions that Theorem \ref{mainthm} does cover, subject to using the extensions provided by (\ref{ext1}) and (\ref{ext2}). Can we weaken our usual conditions in some way that we can easily write down? The answer again seems to be, more or less, no. We may drop condition (I) as our eventual growth rate does not depend on the initial position of the particle as long as there is a path within our set that starts at the same point as the initial position of the first particle. We may also effectively drop condition (IV) --- since it is not possible to get $S=\infty$ without violating condition (III), and the case $S=-\infty$ can always be covered either by bounding above using (\ref{ext1}) and (\ref{ext2}) or by using the many-to-one theorem, Theorem \ref{many_to_one}, more directly. However the interesting conditions (II) and (III) are difficult to shake off, a fact which is best demonstrated by a series of examples.

It is easiest to first consider condition (III).

\begin{ex}\label{badex1}
Take $L(t)\equiv L>0$ to be constant, and let
\[f_\delta(t):=\delta\sin(t/\delta);\]
then as $\delta\to0$, $f_\delta$ converges uniformly to the zero function, $f(t)\equiv 0$.
By Theorem \ref{mainthm} we know that on survival,
\[\lim_{t\to\infty} \frac{1}{t}\log|\hat N^{f,L}(t)| = r - \frac{\pi^2}{8L^2}.\]
However, if the result of Theorem \ref{mainthm} held for each $f_\delta$ then by approximation via (\ref{ext1}) and (\ref{ext2}) we would have (on survival)
\[\lim_{t\to\infty} \frac{1}{t}\log|\hat N^{f,L}(t)| = r - \frac{\pi^2}{8L^2} - \frac{1}{4}.\]
Of course, $(f_\delta,L)$ does not satisfy usual condition (III) and hence this contradiction does not appear -- but the example shows that we cannot simply drop the requirement that $\int_0^t |f''(s)|L(s) ds = o(t)$.
\end{ex}

\begin{ex}\label{badex2}
Take $f(t)\equiv 0$ and $L(t)= 2 + \sin(t^{3/2})$. Intuitively, the sine term oscillates so fast for large $t$ that we are effectively constrained within a tube of constant width 1. Thus we expect (and it is not too hard to imagine a hands-on proof using Theorem \ref{mainthm}) that we should have a growth rate of $r - \pi^2/8$. However, one may show (for example by using the periodicity of sine and approximating the integral by a sum) that
\[\int_0^t \frac{1}{L(s)^2}ds \lesssim \frac{2t}{3\sqrt 3}\]
so that if the result of Theorem \ref{mainthm} held in this case we would have a growth rate of at least $r - \pi^2/12\sqrt 3$. Again, $(f,L)$ does not satisfy usual condition (III) and we see that we cannot just drop the requirement that $\int_0^t |L''(s)|L(s) ds = o(t)$.
\end{ex}

\begin{ex}\label{badex3}
Take $f_0(t)\equiv 0$, $L_0(t) = \sqrt t$, $f_1(t) = t$ and $L_1(t) = t + \sqrt t$. Then the growth rate for $(f_0,L_0)$ is $r$; and since the $L_0$-tube about $f_0$ is contained in the $L_1$-tube about $f_1$, we must have a growth rate for $(f_1,L_1)$ of at least $r$ (in fact it is exactly $r$ since it is well-known that the growth rate of the entire system is $r$). If the result of Theorem \ref{mainthm} held for $(f_1,L_1)$ then its growth rate would be $r - 1/2$; so we see that we cannot simply drop the condition that $|f'(t)|L(t) + |L'(t)|L(t) = o(t)$.
\end{ex}

Now consider condition (II). We can approximate any continuous function with twice continuously differentiable functions, but then how do we approach the conditions on the second derivative (from condition (III))? Even for constant $L$, there are some nowhere-differentiable paths $f$ such that we may find a growth rate for $\hat N^{f,L}$ using (\ref{ext1}) and (\ref{ext2}), and some for which we may not. The lack of even a first derivative to work with in these cases precludes the existence of an obvious simple condition to tell us where to draw the line between these two groups.
We claim simply that any non-smooth sets are best considered on a case-by-case basis using Theorem \ref{mainthm} together with (\ref{ext1}) and (\ref{ext2}).

For example, again with constant $L$, we may easily (by approximating by its partial sums) give a growth rate for the function
\[f(t) = \sum_{n=0}^\infty a^n(\cos(b^n\pi\log(t+1))-1)\]
(where $b$ is a positive odd integer, $0<a<1$ and $ab>1+3\pi/2$), which is a time change of a Weierstrass function and hence, by the chain rule, nowhere differentiable. On the other hand we cannot give an exact growth rate along (almost) any given Brownian path: any uniformly approximating functions must (by the fact that Brownian motion has independent increments) violate our conditions on the second derivative of $f$ in (III).

\section{The critical case $S=0$}\label{critical_sec}
Of course, it is also possible to ask what happens when $S=0$, although as we stated in Section \ref{beta_sec}, we are unable to give a general theory. We did, however, state two results in Section \ref{beta_sec} as examples of what may be achieved by adjusting our earlier methods, and we prove those now.

\begin{proof}[Proof of Theorem \ref{betaex}]
In the case $\beta<1/3$ we may simply mimic the requisite part of the proof of Proposition \ref{uiprop}, using the fact that for $\beta<1/3$,
\[\int_0^t \left(r-\frac{1}{2}f'(s)^2 -\frac{\pi^2}{8L(s)^2} + \frac{L'(s)}{2L(s)}\right) ds = \frac{\pi^2}{8\gamma^2(1-2\beta)}(t+1)^{1-2\beta} + o(t^{1-2\beta})\]
and
\[E(t) = \gamma\sqrt{2r}(t+1)^\beta + o(t^\beta).\]
Now suppose that $\beta>1/3$. We proceed in very much the same way as in the main part of the article, leaving out many of the details. Direct calculation reveals that for $\beta>1/3$,
\[\int_0^t (r-\frac{1}{2}f'(s)^2 -\frac{\pi^2}{8L(s)^2} + \frac{L'(s)}{2L(s)}) ds = \alpha\sqrt{2r}(t+1)^\beta + o(t^\beta)\]
and
\[E(t) = \gamma\sqrt{2r}(t+1)^\beta + o(t^\beta).\]
Thus, by the spine decomposition,
\[\Qt[Z(t)|\Gg_\infty] \leq \int_0^t 2r e^{-(\alpha-\gamma)\sqrt{2r}(s+1)^\beta + o(s^\beta)} ds + e^{-(\alpha-\gamma)\sqrt{2r}(t+1)^\beta + o(t^\beta)}\]
which converges as $t\to\infty$ provided that $\alpha > \gamma$. We deduce that $\Pb(Z(\infty)>0)>0$ provided that $\alpha > \gamma$, and indeed for all $\alpha$ and $\gamma$ since for fixed $\alpha$, increasing $\gamma$ can only increase the probability of survival. The same argument as Proposition \ref{limsup_prop} gives
\[\limsup_{t\to\infty} \frac{\log|\hat N(t)|}{t^\beta} \leq (\alpha + \gamma)\sqrt{2r}.\]

Now, take $\varepsilon>0$ and define $\tilde f(t) := f(t) - (\gamma-\varepsilon)L(t)$ and $\tilde L(t) := \varepsilon L(t)$. Note that the $(\tilde f, \tilde L)$-tube is contained within the $(f,L)$-tube. Define
\[W(t):= \sum_{u\in \hat N^{f,L}(t)} e^{-rt} G_u^{\tilde f, \tilde L}(t)\]
and note that the same argument as in Proposition \ref{liminf_prop} gives that on $\{\liminf W(t)>0\}$ we have
\[\liminf_{t\to\infty} \frac{\log|\hat N(t)|}{t^\beta} \geq (\alpha + \gamma - \varepsilon)\sqrt{2r}.\]
Thus it suffices to show that $\{\liminf W(t)>0\}$ agrees with $\{\Upsilon^{f,L}=\infty\}$ up to a set of zero probability.

Following Lemma \ref{spine_U} and Proposition \ref{invspineprop}, we see that in fact it suffices to show that for any $\delta>0$ we can bound from below the probability that a particle in $\hat N^{f,L}(t)$ which is not within $\delta$ of the edge of the $(f,L)$-tube contributes something positive to $\liminf W(t)$, in analogy with Lemma \ref{tube_filling}. But $W(t) \geq Z^{\tilde f, \tilde L}(t)$, and so instead we show that a particle in $\hat N^{f,L}(t)$ which is not within $\delta$ of the edge of the $(f,L)$-tube contributes something positive to $Z^{\tilde f,\tilde L}(\infty)$.

Now (possibly subject to decreasing $\varepsilon$ further, but this is no problem) we may use the argument given in Lemma \ref{tube_filling} to show that for small enough $\alpha'$ the set $U_{\alpha'}$ for $(\tilde f, \tilde L)$ stretches to near the top and bottom of the $(f,L)$ tube: even when we are distance $\delta$ from the top edge of the tube at time $T$, the smaller tube with radius $\varepsilon (t+1)^\beta$ about $\sqrt{2r}t - \alpha(t+1)^\beta + \gamma(T+1)^\beta - \delta$ fits (for all times $t\geq T$) within the tube of radius $L$ about $f$. Then by using the spine decompositon and Jensen's inequality as in Proposition \ref{uiprop}, we can bound the probability of contributing to $Z^{\tilde f,\tilde L}(\infty)$ away from zero (over all $T$). We may take the same approach when starting from a position closer to the centre of the tube (that is, further than $\delta$ from the edge). Thus, for small enough $\alpha'$, $U_{\alpha'}$ for $(\tilde f,\tilde L)$ stretches to within $\delta$ of the edge of the $(f,L)$ tube for \emph{all} times $t\geq0$. By the argument above, this is enough to complete the proof as in Lemma \ref{spine_U} and Proposition \ref{invspineprop}.
\end{proof}


\begin{proof}[Proof of Theorem \ref{thirdex}]
The first part of the proof proceeds exactly as that of Theorem \ref{betaex}, but with
\[\int_0^t (r - \frac{1}{2}f'(s)^2 -\frac{\pi^2}{8L(s)^2} + \frac{L'(s)}{2L(s)}) ds = \left(\alpha\sqrt{2r}-\frac{3\pi^2}{8\gamma^2}\right)(t+1)^{1/3} + o(t^{1/3})\]
and
\[E(t) = \gamma \sqrt{2r} (t+1)^{1/3} + o(t^{1/3}):\]
the spine decomposition converges if
\[-\alpha\sqrt{2r} + \frac{3\pi^2}{8\gamma^2} + \gamma\sqrt{2r} < 0,\]
so $\Pb(Z(\infty)>0)>0$ if
\[\alpha > \gamma + \frac{3\pi^2}{8\gamma^2\sqrt{2r}}.\]
But increasing $\gamma$ makes the right-hand side of this inequality larger as soon as $\gamma\geq\gamma_1$, and increasing $\gamma$ can only make $\Pb(Z(\infty)>0)$ larger, so (after some rearrangements) we deduce that $\Pb(Z(\infty)>0)>0$ provided \emph{either} $\gamma\geq\gamma_1$ and $\alpha>3\gamma_1/2$ \emph{or} $\gamma<\gamma_1$ and $\alpha>\gamma+\frac{3\pi^2}{8\gamma^2\sqrt{2r}}$.

Under $\Qb$, $Z(t)$ diverges to infinity if $-\alpha\sqrt{2r} + \frac{3\pi^2}{8\gamma^2} - \gamma\sqrt{2r} > 0$. Since $\alpha > 0$, this is impossible if $\gamma \geq \gamma_0$; so we need $\gamma<\gamma_0$ and $\alpha<\frac{3\pi^2}{8\gamma^2\sqrt{2r}}-\gamma$. If $Z(t)\to\infty$ almost surely under $\Qb$, then by Lemma \ref{stdmeas}, $Z(t)\to0$ almost surely under $\Pb$.

The calculations of the $\liminf$s and $\limsup$s are standard, as in Propositions \ref{uiprop}, \ref{liminf_prop} and \ref{limsup_prop}. However, we must again take a different approach to show that $\{Z(\infty)>0\}$ agrees with $\{\Upsilon=\infty\}$ up to a set of zero probability. Our proof, below, is specially adapted to this particular case and takes advantage of the convenient --- and well-known --- fact that $\frac{1}{3} + 2\times\frac{1}{3} = 1$.

We can easily show, straight from the spine decomposition and as in previous calculations, that for any $\delta\in(0,\gamma/2)$, there exists $\alpha'>0$ such that $U_{\alpha'}$ stretches to within $\delta t^{1/3}$ of the edges of the tube at time $t$ for any $t>0$. Thus (in analogy with Lemma \ref{spine_U}) we would like to show, loosely speaking, that with high probability, particles spend a long time outside the tubes of radius $\delta (s+1)^{1/3}$, $s\in[0,t]$ nested just inside the upper and lower boundaries of our main tube about $f$. The idea is that if particles do not want to leave $\hat N(t)$ then staying near the boundaries of the tube is a bad tactic. To be more precise about this, following the direction of part of the proof of Lemma \ref{spine_U} and setting
\[V^1_s:=(f(s)-L(s), f(s)-L(s)+\delta(s+1)^{1/3})\cup(f(s)+L(s)-\delta(s+1)^{1/3}, f(s)+L(s))\]
and
\[V^2_s:=(-L(s), -L(s)+\delta(s+1)^{1/3})\cup(L(s)-\delta(s+1)^{1/3},L(s))\]
we have
\begin{align*}
&\Pt\bigg(\xi_t \in \hat N(t), I_\alpha(\xi_t;t)< t/2\bigg)\\
&\leq \Pt\left(\xi_t\in\hat N(t), \int_0^t \ind_{\{\xi_s \in V^1_s\}}ds > t/2\right)\\
&\leq \Pt\left[\frac{\ind_{\{|\xi_s|<L(s)\hsl\forall s\in[0,t]\}}}{e^{\int_0^t f'(s)d\xi_s - \frac{1}{2}\int_0^t f'(s)^2 ds}}\ind_{\{\int_0^t \ind_{\{\xi_s \in V^2_s\}}ds > t/2\}}\right]\\
&\leq e^{-\frac{1}{2}\int_0^t f'(s)^2 ds + |f'(t)|L(t)+\int_0^t |f''(s)|L(s)ds}\\
&\hspace{8mm}\cdot\Pt\left(|\xi_s|<L(s)\hsl\forall s\in[0,t], \hs \int_0^t \ind_{\{\xi_s \in V^2_s\}}ds > t/2\right)\\
&\leq 2e^{-\frac{1}{2}\int_0^t f'(s)^2 ds + |f'(t)|L(t)+\int_0^t |f''(s)|L(s)ds}\\
&\hspace{8mm}\cdot\Pt\left(|\xi_s|<L(s)\hsl\forall s\in[0,t], \hs \int_0^t \ind_{\{\xi_s \in (L(s)-\delta(s+1)^{1/3},L(s))\}}ds > t/4\right).
\end{align*}
Now, by our calculation of $E$ above, the exponential part
\[2e^{-\frac{1}{2}\int_0^t f'(s)^2 ds + |f'(t)|L(t)+\int_0^t |f''(s)|L(s)ds}\]
is at most $\exp(-rt + \kappa(t+1)^{1/3})$ for some constant $\kappa$ and all large $t$. By the many-to-one theorem,
\begin{align*}
&\Pt\bigg(\exists u\in \hat N(t) : I_\alpha(u;t)< t/2\bigg)\\
&\leq e^{rt} \Pt\bigg(\xi_t \in \hat N(t), I_\alpha(\xi_t;t)< t/2\bigg)\\
&\leq e^{\kappa(t+1)^{1/3}}\Pt\left(|\xi_s|<L(s)\hsl\forall s\in[0,t], \hs \int_0^t \ind_{\{\xi_s \in (L(s)-\delta(s+1)^{1/3},L(s))\}}ds > t/4\right).
\end{align*}
We attempt to show that, for small $\delta>0$, the probability
\[\Pt\left(|\xi_s|<L(s)\hsl\forall s\in[0,t], \hs \int_0^t \ind_{\{\xi_s \in (L(s)-\delta(s+1)^{1/3},L(s))\}}ds > t/4\right)\]
is at most $\exp(-2\kappa(t+1)^{1/3})$.

For the sake of brevity we make some approximations here: for example we will use $t$ instead of $t+1$ in various places, and assume throughout that $t$ is large. Let $\tau:=\delta^2 t^{2/3}$, define
\[T_0:= \inf\{s>0: \xi_s\in(L(s)-\delta(s+1)^{1/3},L(s))\}\wedge t\]
and for $k\geq1$ let
\[T_k:= \inf\{s>T_{k-1}+\tau: \xi_s\in(L(s)-\delta(s+1)^{1/3},L(s))\}\wedge t.\]
Then for any $k\geq0$,
\begin{align*}
&\Pt(|\xi_{T_k+\tau}| < L(T_k+\tau))\\
&\leq \Pt(\xi_{T_k+\tau} - \xi_{T_k} < L(T_k+\tau) - L(T_k) + \delta(T_k+1)^{1/3})\\
&=\Pt(\xi_\tau < \gamma(T_k+\tau+1)^{1/3} - \gamma(T_k+1)^{1/3} + \delta(T_k+1)^{1/3})\\
&\leq \Pt(\xi_\tau < \gamma(\tau+1)^{1/3} + \delta(t+1)^{1/3})\\
&\approx \Pt\left(\xi_1 < \frac{\gamma t^{2/9}}{\delta^{1/3} t^{2/3}} + 1\right)
\end{align*}
which is smaller than $\Pt(\xi_1 < 2)$ when $t$ is large. We now ask how many of the $T_k$ occur strictly before $t$. We know that if
\[\int_0^t \ind_{\{\xi_s \in (L(s)-\delta(s+1)^{1/3},L(s))\}}ds > t/4\]
then
\[\sum_{k\geq1 : T_{k-1} < t} (T_k - (T_{k-1}+\tau)) \leq \frac{3t}{4}\]
and
\[\sum_{k\geq1 : T_{k-1} < t} (T_k - T_{k-1}) \geq t.\]
This tells us that
\[\sum_{k\geq1 : T_{k-1} < t} \tau \geq \frac{t}{4}\]
and hence there must be at least $t/4\tau - 1 = t^{1/3}/4\delta^2 - 1$ of the $T_k$ strictly before $t$. Let $Y$ be a binomial random variable with parameters $(\lfloor t^{1/3}/4\delta^2 - 2\rfloor, \Pt(\xi_1<2))$. At each $T_k$, the spine is within distance $\delta (t+1)^{1/3}$ of the boundary of the tube. If it jumps upwards by too much by time $T_k+\tau$, then it leaves the tube; and it has at least $\lfloor t^{1/3}/4\delta^2 - 2\rfloor$ opportunities to do so. Thus we deduce that
\begin{multline*}
\Pt\left(|\xi_s|<L(s)\hsl\forall s\in[0,t], \hs \int_0^t \ind_{\{\xi_s \in (L(s)-\delta(s+1)^{1/3},L(s))\}}ds > t/4\right)\\
\leq P( Y = 0 ) \approx (1-\Pt(\xi<2))^{t^{1/3}/4\delta^2}.
\end{multline*}
By choosing $\delta$ small we can make this smaller than $\exp(-2\kappa(t+1)^{1/3})$, which is what we required. The rest of the proof follows just as in Proposition \ref{invspineprop}.
\end{proof}

As mentioned in Section \ref{beta_sec}, Theorems \ref{betaex} and \ref{thirdex} should be compared with the work of Bramson \cite{bramson:maximal_displacement_BBM}, Lalley and Sellke \cite{lalley_sellke:conditional_limit_frontier_BBM}, Kesten \cite{kesten:BBM_with_absorption}, Hu and Shi \cite{hu_shi:minimal_pos_crit_mg_conv_BRW} and Jaffuel \cite{jaffuel:crit_barrier_brw_absorption}. Kesten \cite{kesten:BBM_with_absorption}, if translated into the language of this article, effectively considers a ``one-sided'' tube with lower boundary the critical line $\sqrt{2r}t$ and no upper boundary --- he shows that there is extinction almost surely, and that the probability of survival up to time $t$ decays like $e^{-t^{1/3}}$. If we were to consider a tube with lower boundary the line $\sqrt{2r}t$ and upper boundary $\sqrt{2r}t + \alpha t^{1/3}$ we could obtain, by the above methods, a lower bound for Kesten's asymptotic for the probability of survival up to time $t$, which would agree with Kesten's results up to a constant in the exponent. Unfortunately the corresponding upper bound, and more accurate calculations on the right-most particle in the style of Bramson \cite{bramson:maximal_displacement_BBM}, do not seem to be accessible via our current methods: the error term $E(t)$ outweighs the fine adjustments necessary to investigate such quantities. We hope to carry out further work on these and other related issues in the future.

\bibliographystyle{plain}

\begin{thebibliography}{10}

\bibitem{bramson:maximal_displacement_BBM}
M.~D. Bramson.
\newblock Maximal displacement of branching {B}rownian motion.
\newblock {\em Comm. Pure Appl. Math.}, 31(5):531--581, 1978.

\bibitem{git:almost_sure_path_properties}
Y.~Git.
\newblock Almost sure path properties of branching diffusion processes.
\newblock In {\em S\'eminaire de Probabilit\'es, XXXII}, volume 1686 of {\em
  Lecture Notes in Math.}, pages 108--127. Springer, Berlin, 1998.

\bibitem{hardy_harris:spine_bbm_large_deviations}
R.~Hardy and S.~C. Harris.
\newblock A conceptual approach to a path result for branching {B}rownian
  motion.
\newblock {\em Stochastic Process. Appl.}, 116(12):1992--2013, 2006.

\bibitem{hardy_harris:spine_approach_applications}
R.~Hardy and S.~C. Harris.
\newblock A spine approach to branching diffusions with applications to
  ${L}^p$-convergence of martingales.
\newblock In {\em S{\'{e}}minaire de Probabilit{\'{e}}s, XLII}, volume 1979 of
  {\em Lecture Notes in Math.} Springer, Berlin, 2009.

\bibitem{harris_et_al:fkpp_one_sided_travelling_waves}
J.~W. Harris, S.~C. Harris, and A.~E. Kyprianou.
\newblock Further probabilistic analysis of the
  {F}isher-{K}olmogorov-{P}etrovskii-{P}iscounov equation: one sided
  travelling-waves.
\newblock {\em Ann. Inst. H. Poincar\'e Probab. Statist.}, 42(1):125--145,
  2006.

\bibitem{harris_roberts:scaled_growth}
S.~C. Harris and M.~I. Roberts.
\newblock Branching {B}rownian motion: almost sure growth along scaled paths.
\newblock Preprint, \texttt{http://arxiv.org/abs/0906.0291}, 2009.

\bibitem{harris_roberts:extinction_letter}
S.~C. Harris and M.~I. Roberts.
\newblock Measure changes with extinction.
\newblock {\em Statist. Probab. Lett.}, 79(8):1129--1133, 2009.

\bibitem{hu_shi:minimal_pos_crit_mg_conv_BRW}
Y.~Hu and Z.~Shi.
\newblock Minimal position and critical martingale convergence in branching
  random walks, and directed polymers on disordered trees.
\newblock {\em Ann. Probab.}, 37(2):742--789, 2009.

\bibitem{jaffuel:crit_barrier_brw_absorption}
B.~Jaffuel.
\newblock The critical random barrier for the survival of branching random walk
  with absorption.
\newblock Preprint, \texttt{http://arxiv.org/abs/0911.2227}, 2009.

\bibitem{kesten:BBM_with_absorption}
H.~Kesten.
\newblock Branching {B}rownian motion with absorption.
\newblock {\em Stochastic Processes Appl.}, 7(1):9--47, 1978.

\bibitem{kurtz_et_al:conceptual_kesten_stigum}
T.~Kurtz, R.~Lyons, R.~Pemantle, and Y.~Peres.
\newblock A conceptual proof of the {K}esten-{S}tigum theorem for multi-type
  branching processes.
\newblock In K.~B. Athreya and P.~Jagers, editors, {\em Classical and modern
  branching processes (Minneapolis, MN, 1994)}, volume~84 of {\em IMA Vol.
  Math. Appl.}, pages 181--185. Springer, New York, 1997.

\bibitem{lalley_sellke:conditional_limit_frontier_BBM}
S.~P. Lalley and T.~Sellke.
\newblock A conditional limit theorem for the frontier of a branching
  {B}rownian motion.
\newblock {\em Ann. Probab.}, 15(3):1052--1061, 1987.

\bibitem{lee:large_deviations_for_branching_diffusions}
Tzong-Yow Lee.
\newblock Some large-deviation theorems for branching diffusions.
\newblock {\em Ann. Probab.}, 20(3):1288--1309, 1992.

\bibitem{lyons:simple_path_to_biggins}
R.~Lyons.
\newblock A simple path to {B}iggins' martingale convergence for branching
  random walk.
\newblock In K.~B. Athreya and P.~Jagers, editors, {\em Classical and modern
  branching processes (Minneapolis, MN, 1994)}, volume~84 of {\em IMA Vol.
  Math. Appl.}, pages 217--221. Springer, New York, 1997.

\bibitem{lyons_et_al:conceptual_llogl_mean_behaviour_bps}
R.~Lyons, R.~Pemantle, and Y.~Peres.
\newblock Conceptual proofs of {$L\log L$} criteria for mean behavior of
  branching processes.
\newblock {\em Ann. Probab.}, 23(3):1125--1138, 1995.

\bibitem{lyons_peres:probability_on_trees}
Y.~Lyons, R. with~Peres.
\newblock {\em Probability on Trees and Networks}.
\newblock In progress.
\newblock Available online: http://mypage.iu.edu/~rdlyons/prbtree/prbtree.html.

\bibitem{novikov:asymptotic_behaviour_nonexit_probs}
A.A. Novikov.
\newblock On estimates and the asymptotic behavior of nonexit probabilities of
  a {W}iener process to a moving boundary.
\newblock {\em Math. USSR Sbornik}, 38(4):495--505, 1981.

\end{thebibliography}
\def\cprime{$'$}

\end{document}